\definecolor{darkred}{RGB}{100,0,0}
\definecolor{darkgreen}{RGB}{0,100,0}
\definecolor{darkblue}{RGB}{0,0,150}
\newtheorem{Theorem}{Theorem}[section]
\newtheorem{lemma}[Theorem]{Lemma}
\newtheorem{Corollary}[Theorem]{Corollary}
\newtheorem{proposition}[Theorem]{Proposition}
\def\beq{\begin{equation}}
\def\eeq{\end{equation}}
\def\beqn{\begin{eqnarray*}}
\def\eeqn{\end{eqnarray*}}
\def\bitem{\begin{itemize}}
\def\eitem{\end{itemize}}
\def\benum{\begin{enumerate}}
\def\eenum{\end{enumerate}}
\def\bmult{\begin{multline*}}
\def\emult{\end{multline*}}
\def\bcenter{\begin{center}}
\def\ecenter{\end{center}}
\def\cA{\mathcal{A}}
\def\cB{\mathcal{B}}
\def\cC{\mathcal{C}}
\def\cK{\mathcal{K}}
\def\cP{\mathcal{P}}
\def\cT{\mathcal{T}}
\def\cV{\mathcal{V}}
\def\cW{\mathcal{W}}
\def\cZ{\mathcal{Z}}
\def\bA{\boldsymbol{A}}
\def\bB{\boldsymbol{B}}
\def\bD{\boldsymbol{D}}
\def\bE{\boldsymbol{E}}
\def\bJ{\boldsymbol{J}}
\def\bQ{\boldsymbol{Q}}
\def\bT{\boldsymbol{T}}
\def\bU{\boldsymbol{U}}
\def\bV{\boldsymbol{V}}
\def\bb{\mathbf{b}}
\def\bTheta{\boldsymbol{\Theta}}
\def\bbE{\mathbb{E}}
\def\bbP{\mathbb{P}}
\def\bbR{\mathbb{R}}
\def\bb\bU{\mathbb{\bU}}
\newcommand{\E}{\operatorname{\mathbb{E}}}
\renewcommand{\P}{\operatorname{\mathbb{P}}}
\newcommand{\Var}{\operatorname{Var}}
\def\\bUnif{\text{\bUnif}}
\newcommand{\1}{\mathds{1}}
\begin{document}
\title{Oracle inequalities for network models and sparse graphon estimation}
 \author{{\it Olga Klopp},   CREST and MODAL'X, University Paris Ouest\\
{\it Alexandre B. Tsybakov}, ENSAE, UMR CNRS 9194\\
{\it Nicolas Verzelen, INRA.}}
  
\maketitle

%
%
\begin{abstract}

Inhomogeneous random graph models encompass many network models such as stochastic block models and latent position models.
 We consider the problem of statistical estimation of the matrix of connection probabilities based on the observations of the adjacency matrix of the network. Taking the stochastic block model as an approximation, we  construct estimators of network connection probabilities -- the ordinary block constant least squares estimator, and its restricted version.   We show that they satisfy oracle inequalities with respect to the block constant oracle. As a consequence, we derive optimal rates of estimation of the probability matrix. Our results cover the important setting of sparse networks.   Another consequence consists in establishing upper bounds on the minimax risks for graphon estimation in the $L_2$ norm when the probability matrix is sampled according to a graphon model. These bounds  include an additional term accounting for the ``agnostic" error induced by the variability of the latent unobserved variables of the graphon model.  In this setting, the optimal rates are influenced not only by the bias and variance components as in usual nonparametric problems but also include the third component, which is the agnostic error. The results shed light on the differences between estimation under the empirical loss (the probability matrix estimation) and under the integrated loss (the graphon estimation).
\end{abstract}



\section{Introduction}
 Consider a network defined as an undirected graph with $n$ nodes.   Assume that we observe the values $\bA_{ij}\in \{0,1\}$ where  $\bA_{ij}=1$ is interpreted as the fact that the nodes $i$ and $j$ are connected and  $\bA_{ij}=0$ otherwise. We set $\bA_{ii}=0$ for all $1\le i\le n$ and we assume that $\bA_{ij}$ is a Bernoulli random variable with parameter $(\bTheta_0)_{ij}=\P(\bA_{ij}=1)$ for $1\le j<i\le n$. The random variables $\bA_{ij}$, $1\le j<i\le n$, are assumed independent.  We denote by $\bA$ the adjacency matrix i.e., the $n\times n$ symmetric matrix with entries $\bA_{ij}$ for $1\le j<i\le n$ and zero diagonal entries. Similarly, we denote by $\bTheta_0$ the $n\times n$ symmetric matrix with entries $(\bTheta_0)_{ij}$ for $1\le j<i\le n$ and zero diagonal entries. This is a matrix of probabilities associated to the graph; the nodes $i$ and $j$ are connected with probability $(\bTheta_0)_{ij}$. The model with such observations $\bA'=(\bA_{ij}$, $1\le j<i\le n)$  is a special case of inhomogeneous random graph model that we will call for definiteness the {\it network sequence model}\footnote{In some recent papers, it is also called the inhomogeneous Erd\"os-R\'enyi model, which is somewhat ambiguous since the words ``Erd\"os-R\'enyi model'' designate a homogeneous graph.}, to emphasize a parallel with the gaussian sequence model.

\subsection{Graphons and sparse graphon models}

Networks arise in many areas such as information technology,  social life, genetics. These real-life networks are in permanent movement and often their size is growing. Therefore, it is natural to look for a well-defined "limiting object" independent of the network size $n$ and such that a stochastic network can be viewed as a partial observation of this limiting object.
Such objects called the \textit{graphons} play a central role in the recent theory of graph limits introduced by Lov\'asz and Szegedy \cite{LovaszSzegedy}. For a detailed description of this theory we refer to the monograph by Lov\'asz \cite{LovaszBook}. Graphons are symmetric  measurable functions $W :[0,1]^{2}\rightarrow [0,1]$.
In the sequel, the space of graphons is denoted by $\cW$.
The main message is that every graph limit can be represented by a graphon. This beautiful theory of graph limits was developed for \textit{dense graphs}, that is, graphs with the number of edges comparable to the square of the number of vertices.

 Graphons  give a natural way of generating random graphs~\cite{LovaszBook,MR2463439}. The probability that  two distinct nodes $i$ and $j$ are connected in the graphon model is the random variable 
 \begin{equation}\label{grapnon_mod}
(\bTheta_0)_{ij}= W_0(\xi_i,\xi_j)
\end{equation} 
where $\xi_1,\dots,\xi_n$ are unobserved (latent) i.i.d. random variables uniformly distributed on $[0,1]$. As above, the diagonal entries of $\bTheta_0$ are zero.        
 Conditionally on ${\boldsymbol\xi}=(\xi_1,\dots,\xi_n)$, the observations $\bA_{ij}$ for $1\le j<i\le n$ are assumed to be independent Bernoulli random variables with success probabilities  $(\bTheta_0)_{ij}$. For any positive integer $n$, a graphon function $W_{0}$  defines  a probability distribution on graphs of size $n$. Note that this model is different from the network sequence model since the observations $\bA_{ij}$ are no longer independent. 
 If $W_0$ is a step-function with $k$ steps, we obtain an analog of the stochastic block model with $k$ groups.
 More generally, many exchangeable distributions on networks~\cite{MR2463439}, including random dot product graphs~\cite{MR3113816} and some  geometric random graphs~\cite{MR1986198} can be expressed as graphons.
 

Given an observed adjacency matrix $\bA'$ sampled according to the model \eqref{grapnon_mod}, the graphon function $W_{0}$ is not identifiable. The topology of a network is invariant with respect to any change of labelling of its nodes. Consequently, for any given function $W_{0}(\cdot,\cdot)$ and a measure-preserving bijection $\tau : [0,1]\rightarrow [0,1]$ (with respect to the Lebesgue measure), the functions $W_{0}(x,y)$ and $W_{0}^\tau(x,y):=W_{0} (\tau(x),\tau(y))$ define the same probability distribution on random graphs. The equivalence classes of graphons defining the same probability distribution are characterized in a slightly more involved way based on the following result. 

\begin{proposition}[{\cite[Sect.10]{LovaszBook}}] Two graphons $U$  and $W$ in $\cW$ are called  weakly isomorphic if there exist measure preserving maps $\phi$, $\psi$: $[0,1]\to [0,1]$ such that $U^{\phi}= W^{\psi}$ almost everywhere. Two graphons $U$ and $W$ define the same probability distribution if and only if they are weakly isomorphic. 
\end{proposition}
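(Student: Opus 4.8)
The plan is to route everything through the \emph{cut distance} $\delta_\square$ and homomorphism densities, splitting the claimed equivalence into three links: (i) $U$ and $W$ induce the same law on $\mathbb{G}(n,\cdot)$ for every $n$ $\iff$ $t(F,U)=t(F,W)$ for every finite simple graph $F$, where $t(F,\cdot)$ is the homomorphism density; (ii) $t(F,U)=t(F,W)$ for all $F$ $\iff$ $\delta_\square(U,W)=0$; (iii) $\delta_\square(U,W)=0$ $\iff$ $U$ and $W$ are weakly isomorphic. Concatenating (i)--(iii) is exactly the assertion.

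I would dispose of the easy content first. For the \emph{if} direction one does not need (iii): if $U^\phi=W^\psi$ a.e.\ with $\phi,\psi$ measure preserving, then, since the pushforward of the uniform law under a measure-preserving map is again uniform, generating $\mathbb{G}(n,W^\psi)$ with uniform latent points $\xi_i$ is the same experiment as generating $\mathbb{G}(n,W)$ with the uniform points $\psi(\xi_i)$; hence $\mathbb{G}(n,W^\psi)\stackrel{d}{=}\mathbb{G}(n,W)$ and, symmetrically, $\mathbb{G}(n,U^\phi)\stackrel{d}{=}\mathbb{G}(n,U)$, while $U^\phi=W^\psi$ a.e.\ forces the two middle laws to coincide. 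Link (i) is elementary inclusion--exclusion: expanding the non-edge factors $1-W$, the probability that $\mathbb{G}(n,W)$ equals a prescribed labelled graph on $[n]$ is a fixed finite linear combination of the densities $t(F,W)$ over graphs $F$ with at most $n$ vertices, and conversely these densities are recovered from such probabilities, so the two families of data determine each other. The easy half of (ii) is the Counting Lemma, $|t(F,U)-t(F,W)|\le e(F)\,\delta_\square(U,W)$, which makes $\delta_\square=0$ force all densities to agree; and the easy half of (iii) holds because $\delta_\square$ is, by its definition, invariant under the action of measure-preserving maps.

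The substantive half of (ii) --- equal densities imply $\delta_\square(U,W)=0$ --- I would deduce from the concentration of a sampled graph around its generating graphon, namely $\delta_\square\big(W,\mathbb{G}(n,W)\big)\to 0$ almost surely (which itself rests on the weak regularity lemma, a bounded-difference concentration inequality, and a covering argument over step functions). Granting this: if $t(F,U)=t(F,W)$ for all $F$, then by (i) $\mathbb{G}(n,U)\stackrel{d}{=}\mathbb{G}(n,W)$, so for any $\eps>0$ and $n$ large a graph $G_n\sim\mathbb{G}(n,U)$ satisfies both $\delta_\square(U,G_n)<\eps$ and $\delta_\square(W,G_n)<\eps$ with probability tending to $1$ (the second because $G_n$ has the same law as a draw from $\mathbb{G}(n,W)$); choosing one such $G_n$ and using the triangle inequality gives $\delta_\square(U,W)<2\eps$, hence $\delta_\square(U,W)=0$.

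The hard direction of (iii) --- manufacturing the measure-preserving maps $\phi,\psi$ from $\delta_\square(U,W)=0$ --- is the main obstacle, where all the real work sits. The plan: first pass to \emph{twin-free} representatives by quotienting each graphon by the relation $x\sim x'$ iff $W(x,\cdot)=W(x',\cdot)$ a.e., obtaining graphons on standard probability spaces whose neighbourhood map $x\mapsto W(x,\cdot)\in L^1$ is injective and still at cut distance $0$ from one another; then show that for twin-free graphons $\delta_\square=0$ already produces a single measure-preserving bijection (mod null sets) between the two spaces --- proved by matching their regularity partitions at finer and finer scales, using injectivity of the neighbourhood map to make the matchings consistent, and passing to the limit; finally transport this bijection to $[0,1]$ using that every atomless standard probability space is measure-isomorphic to $([0,1],\mathrm{Leb})$, which yields $\phi,\psi$ with $U^\phi=W^\psi$ a.e. I expect the middle step to absorb essentially all the difficulty --- it is precisely the uniqueness theorem for graph limits --- whereas the twin-free reduction and the transfer to $[0,1]$ are routine facts about standard Borel spaces. (A more probabilistic alternative for (iii) runs through the Aldous--Hoover representation of the exchangeable array $(\bA_{ij})$ and Kallenberg's theorem that the randomizing variable there can be eliminated, but the combinatorial route is more self-contained.)
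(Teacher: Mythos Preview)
The paper does not prove this proposition at all: it is stated with a citation to \cite[Sect.~10]{LovaszBook} and used as a black box, with no argument supplied. So there is no ``paper's own proof'' to compare your proposal against.

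That said, your sketch is a faithful outline of how the result is actually established in the reference (via cut distance, homomorphism densities, the sampling lemma, and the twin-free reduction). If anything, you are doing more than the paper asks: for the purposes of this paper the proposition is simply imported, and you would be entitled to do the same.
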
\label{prp:weakiso}

This proposition motivates considering the equivalence classes of graphons that are weakly isomorphic. The corresponding quotient space is denoted by $\widetilde{\cW}$.

It is easy to see that the expected number of edges in model \eqref{grapnon_mod} is a constant times the squared number of vertices, which corresponds to the dense case. Networks observed in the real life are often \textit{sparse} in the sense that the number of edges is of the order $o(n^{2})$ as
the number $n$ of vertices tends to infinity. The situation in the sparse case  is more complex than in the dense case. Even the almost dense case with $n^{2-o(1)}$ edges is rather different from the dense case. The extremely sparse case
corresponds to the \textit{bounded degree} graph where all degrees are smaller than a fixed positive integer. It is an opposite extreme to the dense graph. Networks that occur in the applications are usually between these two extremes described by dense and bounded degree graphs. They often correspond to inhomogeneous networks  with density of edges tending to $0$ but with the maximum degree tending to infinity as $n$ grows.

For a given $\rho_n>0$, one can modify the definition \eqref{grapnon_mod} to get a random graph model with $O(\rho_nn^{2})$ edges. It is usually assumed that $\rho_n\rightarrow 0$ as $n\rightarrow \infty$. 
The adjacency matrix $\bA'$ is  sampled  according to graphon $W_0\in \cW$ with scaling parameter $\rho_n$ if for all $j<i$, 
\begin{equation}\label{sparse_grapnon_mod}
      (\bTheta_0)_{ij}=\rho_n W_0(\xi_i,\xi_j)\ 
\end{equation} 
where $\rho_n>0$ is the scale parameter that can be interpreted as the expected proportion of non-zero edges.
Alternatively, model \eqref{sparse_grapnon_mod} can be considered  as a graphon model \eqref{grapnon_mod} that has been sparsified in the sense that its edges have been independently removed  with probability $1-\rho_n$  and kept with probability $\rho_n$. This sparse graphon model was considered in \cite{bickel2009nonparametric, bickel2011method,WolfeOlhede,xu2014edge}.

%


\bigskip

\subsection{Our results}

This paper has two contributions. First, we study optimal rates of estimation of the probability matrix $\bTheta_0$ under the Frobenius norm from an observation  $\bA'=(\bA_{ij}$, $1\le j<i\le n)$ in the network sequence model. We estimate $\bTheta_0$ by a  block-constant matrix and we focus on deriving oracle inequalities with optimal rates (with possibly non-polynomial complexity of estimation methods). Note that estimating $\bTheta_0$ by a $k\times k$ block constant matrix is equivalent to fitting a stochastic block model with $k$ classes.
Estimation of $\bTheta_0$ has already been considered by \cite{chatterjee2014matrix,xu2014edge, chan2014consistent} but convergence rates obtained there are far from being optimal. More recently,  Gao et al. \cite{gao2014rate} have established the minimax estimation rates for $\bTheta_0$  on classes of block constant matrices and on the smooth graphon classes. Their analysis is restricted to the dense case \eqref{grapnon_mod} corresponding to $\rho_n=1$ when dealing with  model \eqref{sparse_grapnon_mod}. In this paper, we explore the general setting of model \eqref{sparse_grapnon_mod}. In particular, our aim is to understand the behavior of least squares estimators when the probabilities $\left (\bTheta_0\right )_{ij}$ can be arbitrarily small. This will be done via developing oracle inequalities with respect to the block constant oracle.
Two estimators will be considered - the ordinary block constant least squares estimator, and its restricted version where the estimator is chosen in the $\ell_\infty$ cube of a given radius. As corollaries, we provide an extension for the {\it sparse} graphon model of some minimax results in \cite{gao2014rate} and we quantify the impact of the scaling parameter $\rho_n$ on the optimal rates of convergence. 

Second, we consider estimation of the  graphon function $W_{0}$ based on  observation $\bA'$. In view of Proposition \ref{prp:weakiso}, graphons are not identifiable and can only be  estimated up to weak isomorphisms. Hence, we study estimation of $W_{0}$ in the quotient space $\widetilde{\cW}$ of graphons.  In order to contrast this problem with the estimation of $\bTheta_0$, one can invoke an analogy with the random design nonparametric regression. Suppose that we observe $(y_i,\xi_i)$, $i=1,\ldots, n$, that are independently sampled according to the model $y=f(\xi)+\epsilon$ where $f$ is an unknown regression function, $\epsilon$ is a zero mean random variable and $\xi$ is distributed with some density $h$ on $[0,1]$. Given a sample of $(y_i,\xi_i)$, estimation of $f$ with respect to the {\it empirical} loss is equivalent to estimation of the vector $(f(\xi_1),\ldots,f(\xi_n))$ in,  for instance, the Euclidean norm. On the other hand, estimation under the integrated loss consists in constructing an estimator $\hat{f}$ such that the integral $\int (\hat{f}(t)-f(t))^2 h(t)dt$ is small. Following this analogy, estimation of $\bTheta_{0}$ corresponds to an empirical loss problem whereas the graphon estimation corresponds to an integrated loss problem. However, as opposed to nonparametric regression,
in the graphon models~(\ref{grapnon_mod}) and~(\ref{sparse_grapnon_mod}) the design $\xi_1,\dots,\xi_n$ is not observed, which makes it quite challenging to derive the convergence rates in these settings.

 In Section~\ref{sec:graphon}, we obtain $L_2$ norm non-asymptotic estimation rates for graphon estimation on classes of step functions (analogs of stochastic block models) and on classes of smooth graphons in model~\eqref{sparse_grapnon_mod}. This result improves upon previously known bounds by Wolfe and Olhede \cite{WolfeOlhede}. For classes of step function graphons, we also provide a matching minimax lower bound allowing one to characterize the regimes such that graphon optimal estimation rates are slower than probability matrix estimation rates.
In a work parallel to ours, Borgs et al. \cite{borgs2015} have analyzed the  rates of convergence for estimation of step function graphons under the privacy model. Apart from the issues of privacy that we do not consider here, their results in our setting provide a weaker version of the upper bound in Corollary \ref{cor:integrated_risk_sbm}, with a suboptimal rate, which is the square root of the rate of Corollary \ref{cor:integrated_risk_sbm} in the moderately sparse zone.  We also mention the paper by Choi \cite{choi2015} devoted to the convergence of empirical risk functions associated to the graphon model.



\subsection{Notation}

We provide a brief summary of the notation used throughout this paper. 
\begin{itemize}
\item For a matrix $\bB$, we denote by $\bB_{ij}$ (or by $\bB_{i,j}$, or by $(\bB)_{ij}$) its $ (i, j)$th entry.

\item For an integer $m$, set $[m]=\{1,\dots,m\}$.

\item Let  $n,k$ and $n_0$ be integers such that $2\le k\le n$, $n_0\le n$. We denote by $\cZ_{n,k,n_0}$ the set of all mappings $z$ from $[n]$ to $[k]$ such that $\min_{a=1,\ldots, k}|z^{-1}(a)|\geq n_0$ (the minimal size of each ``community" is $n_0$). For brevity, we set $\cZ_{n,k}=\cZ_{n,k,1}$.

\item  We denote by $\mathbb{R}^{k\times k}_{\rm sym}$ the class of all symmetric $k\times k$ matrices with real-valued entries.
\item The inner product between matrices $\bD,\bB\in \mathbb{R}^{n\times n}$ will be denoted by $\langle \bD, \bB\rangle = \sum_{i,j} \bD_{ij}\bB_{ij}$.

\item Denote by $\|B\|_F$ and by $\|B\|_\infty$ the Frobenius norm and the entry-wise supremum norm of matrix $\bB\in \mathbb{R}^{n\times n}$ respectively.

\item We denote by $\lfloor x\rfloor$  the maximal integer less than $x\geq 0$ and by $\lceil x \rceil$  the smallest integer greater than or equal to $x$.
\item We denote by $\E$ the expectation with respect to the distribution of $\bA$ if we consider  the network sequence model and the expectation with respect to the joint distribution of $({\boldsymbol\xi},\bA)$ if we consider the graphon model.  
\item   We denote by $C$ positive constants that can vary from line to line. These are absolute constants unless otherwise mentioned. 
\item We denote by $\lambda$ the Lebesgue measure on the interval $[0,1]$.

\end{itemize}


\section{Probability matrix estimation}\label{sec:proba_matrix}
In this section, we deal with the network sequence model and we obtain optimal rates of estimation in the Frobenius norm of the probability matrix $\bTheta_0$.
Fixing some integer $k>0$,  we estimate the matrix $\bTheta_0$ by a block constant matrix with $k\times k$ blocks.

\subsection{Least squares estimator} 
First, we study the least squares estimator $\widehat{\bTheta}$ of $\bTheta_0$ in the collection of all $k\times k$ block constant matrices with block size larger than some given integer~$n_0$. 
Recall that we denote by $\cZ_{n,k,n_0}$ the set of all possible mappings $z$ from $[n]$ to $[k]$ such that $\min_{a=1,\ldots, k}|z^{-1}(a)|\geq n_0$.
  For any $z\in \cZ_{n,k,n_0}$, $\bQ\in \mathbb{R}^{k\times k}_{\rm sym}$, we define the residual sum of squares 
\[L(\bQ,z)= \sum_{(a,b)\in [k]\times [k]} \ \sum_{(i,j)\in z^{-1}(a)\times z^{-1}(b),\   {j<i}   }(\bA_{ij}-\bQ_{ab})^2
\]
and consider the least squares estimator of $(\bQ,z)$:
\[(\hat{\bQ},\hat{z})\in \arg\min_{\bQ\in \mathbb{R}^{k\times k}_{\rm sym},\ z\in \cZ_{n,k,n_0}}L(\bQ,z)
\]
The block constant least squares estimator of  $(\bTheta_{0})_{ij}$ is defined as $\widehat{\bTheta}_{ij}= \widehat{\bQ}_{\hat{z}(i)\hat{z}(j)}$ for all $i> j$. Note that $\widehat{\bTheta}_{ij}\in[0,1]$. Finally, we denote by $\widehat{\bTheta}$ the symmetric matrix with entries $\widehat{\bTheta}_{ij}$ for all $i> j$ and with zero diagonal entries. 
According to the stochastic  block models terminology, $\hat{\bQ}$ stands for the estimated connection probabilities   whereas $\hat{z}$ is the estimated partition of the of nodes into $k$ communities. 
The only difference between this estimator and the one considered in \cite{gao2014rate} 
is the restriction $|z^{-1}(a)|\geq n_0$. 
This prevents the partition $z$ from being too unbalanced. A common choice of $n_0$ is of the order $n/k$, which is referred to as balanced partition.
 Taking larger $n_0$ when we have additional information allows us to obtain simpler estimators since we reduce the number of configurations.

\subsection{Restricted least squares estimator}

Given $r\in(0,1]$, consider the least squares estimator restricted to the $l_\infty$ ball of radius $r$,
\[(\hat{\bQ}_r,\hat{z}_r)\in \arg\min_{\bQ\in \mathbb{R}^{k\times k}_{\rm sym}:\ \|\bQ\|_{\infty}\leq r,\ z\in \cZ_{n,k}}L(\bQ,z).\]
The estimator $\widehat{\bTheta}^{r}$ of matrix $\bTheta_0$ is defined as the symmetric matrix with entries $\widehat{\bTheta}^{r}_{ij}= (\widehat{\bQ}_r)_{\hat{z}_r(i)\hat{z}_r(j)}$ for all $i> j$, and with zero diagonal entries.
Note that here we consider any partitions, including really unbalanced ones $(n_0=1)$.

\subsection{Oracle inequalities}

Let $\bTheta_{*,n_0}$ be the best Frobenius norm approximation of $\bTheta_0$ in the collection of matrices
$$\cT_{n_0}[k]= \{\bTheta:\ \exists \ z\in \cZ_{n,k,n_0},\ \bQ\in \mathbb{R}^{k\times k}_{\rm sym} \text{ such that } \bTheta_{ij}=\bQ_{z(i)z(j)}, \ i\ne j,   \ \text{ and }  \bTheta_{ii}=0 \ \forall  i\}.$$
In particular, $\cT_{1}[k]$ is the set of all probability matrices corresponding to $k$-class stochastic block models without group size restriction. For brevity, we  write $\cT_{1}[k] = \cT[k]$, and  $\bTheta_{*,1}=\bTheta_*$.
\begin{proposition}\label{prp:l2} Consider the network sequence model.
There exist positive absolute constants $C_1$, $C_2$, $C_3$ such that for $n_0\ge 2$,
\beq\label{eq:risk_l2}
 \E\left[\frac{1}{n^{2}}\|\widehat{\bTheta}- \bTheta_0\|_F^2\right]\leq \frac{C_1}{n^{2}} \|\bTheta_0-\bTheta_{*,n_0}\|_F^2 + C_2\|\bTheta_0\|_{\infty}\left (\frac{\log(k)}{n}+ \frac{k^2}{n^{2}}\right ) +   \frac{C_3\log(n/n_0)}{n_0}\left (\frac{\log(k)}{n}+ \frac{k^2}{n^{2}}\right ).
 \eeq
\end{proposition}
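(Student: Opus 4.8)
Throughout write $\bA=\bTheta_0+\bE$, where $\bE$ is the symmetric zero-diagonal matrix with entries $\bE_{ij}=\bA_{ij}-(\bTheta_0)_{ij}$, $j<i$: these are independent, centered, bounded by $1$ in absolute value, with $\Var(\bE_{ij})=(\bTheta_0)_{ij}(1-(\bTheta_0)_{ij})\le\|\bTheta_0\|_\infty$. For $z\in\cZ_{n,k,n_0}$ let $W_z$ be the linear space of symmetric zero-diagonal matrices constant on every block $z^{-1}(a)\times z^{-1}(b)$, and $\bP_z$ the orthogonal projection onto $W_z$; then $\dim W_z\le k(k+1)/2$, and since each block contains at least $\binom{n_0}{2}\ge1$ pairs $(i,j)$ with $j<i$, every unit vector of $W_z$ has $\ell_\infty$-norm at most $2/n_0$. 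For fixed $z$ the minimiser of $L(\cdot,z)$ over $\bQ$ is the blockwise average, so $\widehat\bTheta=\bP_{\hat z}\bA$; set also $\bTheta_*^z:=\bP_z\bTheta_0$ (the blockwise average of $\bTheta_0$), so that $\bTheta_{*,n_0}=\bTheta_*^{z_*}$ for a minimiser $z_*\in\arg\min_{z\in\cZ_{n,k,n_0}}\|\bTheta_0-\bTheta_*^z\|_F$. The plan is the standard least-squares argument, carried out so that the Bernoulli variance $\|\bTheta_0\|_\infty$, not a crude constant, enters the stochastic terms, with the minimal block size $n_0$ controlling the residual ``scale''.

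Since $(\widehat\bQ,\hat z)$ minimises $L$ and the pair producing $\bTheta_{*,n_0}$ is admissible, $\|\bA-\widehat\bTheta\|_F^2\le\|\bA-\bTheta_{*,n_0}\|_F^2$; substituting $\bA=\bTheta_0+\bE$ gives the basic inequality
\beq
\|\widehat\bTheta-\bTheta_0\|_F^2\ \le\ \|\bTheta_0-\bTheta_{*,n_0}\|_F^2\ +\ 2\<\bE,\ \widehat\bTheta-\bTheta_{*,n_0}\>.
\eeq
Using $\widehat\bTheta=\bP_{\hat z}\bA=\bTheta_*^{\hat z}+\bP_{\hat z}\bE$ and $\bP_{\hat z}^2=\bP_{\hat z}$, the cross term equals $\|\bP_{\hat z}\bE\|_F^2+\<\bE,\bTheta_*^{\hat z}-\bTheta_{*,n_0}\>$, so it suffices to control $T_1:=\sup_{z\in\cZ_{n,k,n_0}}\|\bP_z\bE\|_F^2$ and $T_2:=\<\bE,\bTheta_*^{\hat z}-\bTheta_{*,n_0}\>$. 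For $T_2$, the entries of $\bTheta_*^z-\bTheta_{*,n_0}$ lie in $[-\|\bTheta_0\|_\infty,\|\bTheta_0\|_\infty]$, so for each fixed $z$ the quantity $\<\bE,\bTheta_*^z-\bTheta_{*,n_0}\>$ is a sum of independent centered variables of size $\le\|\bTheta_0\|_\infty$ with total variance $\le\|\bTheta_0\|_\infty\|\bTheta_*^z-\bTheta_{*,n_0}\|_F^2$; Bernstein's inequality, a union bound over $|\cZ_{n,k,n_0}|\le k^{\,n}$, and Young's inequality give, with probability $\ge1-e^{-t}$ and uniformly in $z$,
\[
 \<\bE,\bTheta_*^z-\bTheta_{*,n_0}\>\ \le\ \tfrac18\|\bTheta_*^z-\bTheta_{*,n_0}\|_F^2\ +\ C\,\|\bTheta_0\|_\infty\,\bigl(n\log k+t\bigr).
\]
Since $\bTheta_*^{\hat z}=\bP_{\hat z}\bTheta_0$ is the closest point of $W_{\hat z}$ to $\bTheta_0$ while $\widehat\bTheta\in W_{\hat z}$, we have $\|\bTheta_*^{\hat z}-\bTheta_0\|_F\le\|\widehat\bTheta-\bTheta_0\|_F$, hence $\|\bTheta_*^{\hat z}-\bTheta_{*,n_0}\|_F^2\le2\|\widehat\bTheta-\bTheta_0\|_F^2+2\|\bTheta_0-\bTheta_{*,n_0}\|_F^2$, so that $2T_2\le\tfrac12\|\widehat\bTheta-\bTheta_0\|_F^2+\tfrac12\|\bTheta_0-\bTheta_{*,n_0}\|_F^2+C\|\bTheta_0\|_\infty(n\log k+t)$, the first term being absorbed into the left-hand side of the basic inequality.

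The core of the proof, and the step I expect to be the main obstacle, is the bound on $T_1$. For fixed $z$ one has $\E\|\bP_z\bE\|_F^2=\trace(\bP_z\Cov(\bE))\le\|\bTheta_0\|_\infty\dim W_z\le\|\bTheta_0\|_\infty k^2$, and $\|\bP_z\bE\|_F^2$ splits along the blocks into a sum of at most $k(k+1)/2$ \emph{independent} Bernstein-type variables, each a normalised sum over a block of at least $\binom{n_0}{2}$ centered entries --- it is exactly this that forces the sub-exponential ``scale'' of each summand to be $\lesssim 1/n_0$. One then needs a deviation inequality for $\|\bP_z\bE\|_F^2$ that keeps the variance proxy $\|\bTheta_0\|_\infty$ and the scale $1/n_0$ in separate places, combined with a union bound over $\cZ_{n,k,n_0}$. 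The delicate point is to arrange this so that the entropy term $\log|\cZ_{n,k,n_0}|\le n\log k$ multiplies the factor $\|\bTheta_0\|_\infty$ (together with a $\tfrac{\log(n/n_0)}{n_0}$ term accounting for the block-average scale) rather than entering quadratically; the independence across blocks and the lower bound $\binom{n_0}{2}$ on block sizes are what should make this possible, leading to a bound of the form
\[
 \E\,T_1\ \le\ C\Bigl(\|\bTheta_0\|_\infty+\tfrac{\log(n/n_0)}{n_0}\Bigr)\bigl(k^2+n\log k\bigr)\qquad(\text{up to lower-order terms}).
\]
Finally I would insert the bounds for $T_1$ and $2T_2$ into the basic inequality, move $\tfrac12\|\widehat\bTheta-\bTheta_0\|_F^2$ to the left-hand side, take expectations (combining the high-probability events and integrating the $e^{-t}$ tails, which contribute only lower-order terms), divide by $n^2$, and rewrite $k^2+n\log k=n^2\bigl(\tfrac{\log k}{n}+\tfrac{k^2}{n^2}\bigr)$ to obtain \eqref{eq:risk_l2}.
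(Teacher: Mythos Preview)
Your decomposition coincides with the paper's: your $T_2$ is essentially their term (I) (up to the mean-zero shift $\langle\bE,\bTheta_0-\bTheta_{*,n_0}\rangle$), and your $T_1=\|\bP_{\hat z}\bE\|_F^2$ is exactly their term (II), since $\widehat\bTheta-\widetilde\bTheta_{\hat z}=\bP_{\hat z}\bE$. Your treatment of $T_2$ matches theirs.

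The gap is in $T_1$. The argument you sketch---use that unit vectors of $W_z$ have $\ell_\infty$-norm at most $C/n_0$, apply Bernstein to $\langle\bE,\bV\rangle$, and take a union bound over a net of the unit sphere of $W_z$ and over $z\in\cZ_{n,k,n_0}$---produces
\[
\|\bP_{\hat z}\bE\|_F\ \le\ C\sqrt{\|\bTheta_0\|_\infty L}+\frac{C}{n_0}\,L,\qquad L:=k^2+n\log k+t,
\]
and hence $T_1\le C\|\bTheta_0\|_\infty L+CL^2/n_0^2$. The second term is $\frac{(k^2+n\log k)^2}{n_0^2}$, not the claimed $\frac{\log(n/n_0)}{n_0}(k^2+n\log k)$; for a balanced partition ($n_0\asymp n/k$) it is worse by a factor of order $k$. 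The claim that each block summand of $\|\bP_z\bE\|_F^2$ has ``sub-exponential scale $\lesssim 1/n_0$'' is not correct either: $m_{ab}\bar E_{ab}^2$ has a $\exp(-c\sqrt{m_{ab}u})$ tail for large $u$, so a direct Bernstein bound on the sum of these squares is not available.

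The idea you are missing, which the paper supplies, is a \emph{separate} high-probability control of $\|\bP_{\hat z}\bE\|_\infty$. One writes $\|\bP_{\hat z}\bE\|_F^2=\langle\bE,\bP_{\hat z}\bE\rangle$ and, via a net argument on the unit sphere of $W_z$, obtains
\[
\|\bP_{\hat z}\bE\|_F^2\ \le\ C\|\bP_{\hat z}\bE\|_F\sqrt{\|\bTheta_0\|_\infty L}+C\,\|\bP_{\hat z}\bE\|_\infty\,L,
\]
where the $\ell_\infty$-factor comes from the Bernstein term evaluated at the (random) direction $\bP_{\hat z}\bE/\|\bP_{\hat z}\bE\|_F$. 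Then $\|\bP_{\hat z}\bE\|_\infty=\max_{a,b}|\bar E_{ab}|$ is bounded, uniformly in $\hat z$, by a union bound over all pairs of subsets $\cV_1,\cV_2\subset[n]$ with $|\cV_1|,|\cV_2|\ge n_0$; the entropy of choosing one such subset of size $m$ is $m\log(en/m)$, which divided by the block size $\asymp m^2$ gives at worst $\log(n/n_0)/n_0$. This yields $\|\bP_{\hat z}\bE\|_\infty\le C\bigl(\|\bTheta_0\|_\infty+\tfrac{\log(n/n_0)}{n_0}\bigr)$ with high probability, and plugging this into the display above (and absorbing $\|\bP_{\hat z}\bE\|_F$ on the left) gives the correct third term. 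The point is that the $\ell_\infty$ part of Bernstein should pay only the entropy of choosing \emph{one block}, not of choosing the entire labeling $z$; your direct union bound over $\cZ_{n,k,n_0}$ cannot achieve this.
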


To discuss some consequences of this oracle inequality, we consider the case of balanced partitions.

 \begin{Corollary}\label{cor:1} Consider the network sequence model.
 Let $n_0\ge Cn/k$ for some $C>0$ (balanced partition), $n_0\ge 2$, and  $\|\bTheta_{0}\|_{\infty} \ge \frac{k\log(k)}{n}$. Then,
 there exist positive absolute constants $C_1$, $C_2$, such that  
\beq\label{eq:risk_l2_1}
 \E\left[\frac{1}{n^{2}}\|\widehat{\bTheta}- \bTheta_0\|_F^2\right]\leq \frac{C_1}{n^{2}} \|\bTheta_0-\bTheta_{*,n_0}\|_F^2 + C_2\|\bTheta_0\|_{\infty}\left (\frac{\log(k)}{n}+ \frac{k^2}{n^{2}}\right ).
  \eeq
 \end{Corollary}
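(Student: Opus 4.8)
The plan is to obtain Corollary~\ref{cor:1} as an immediate consequence of the oracle inequality \eqref{eq:risk_l2} of Proposition~\ref{prp:l2} (which applies since $n_0\ge 2$). The only thing left to do is to show that, under the two extra hypotheses — the balanced-partition condition $n_0\ge Cn/k$ and the lower bound $\|\bTheta_0\|_\infty\ge k\log(k)/n$ — the third term on the right-hand side of \eqref{eq:risk_l2} is bounded by a constant multiple of the second term. It is then absorbed into the second term, changing only the value of $C_2$.

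To this end I would first bound the ``complexity'' prefactor $\log(n/n_0)/n_0$ purely deterministically. From $n_0\ge Cn/k$ we get $1/n_0\le k/(Cn)$ and $n/n_0\le k/C$; recalling that $\cZ_{n,k,n_0}$ must be nonempty for the estimator to be defined, we have $kn_0\le n$, so $C\le 1$ and $\log(n/n_0)>0$. For $k\ge 2$,
\[
\log(n/n_0)\le\log(k/C)=\log k+\log(1/C)\le\Bigl(1+\tfrac{\log(1/C)}{\log 2}\Bigr)\log k,
\]
hence $\log(n/n_0)/n_0\le c_C\,k\log(k)/n$ with $c_C$ depending only on $C$. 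Invoking now the hypothesis $\|\bTheta_0\|_\infty\ge k\log(k)/n$ gives $\log(n/n_0)/n_0\le c_C\,\|\bTheta_0\|_\infty$.

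Substituting this bound into \eqref{eq:risk_l2}, the third term is at most $C_3c_C\|\bTheta_0\|_\infty\bigl(\log(k)/n+k^2/n^2\bigr)$, which merges into the second term. This yields \eqref{eq:risk_l2_1} with the same $C_1$ as in Proposition~\ref{prp:l2} and with $C_2$ replaced by $C_2+C_3c_C$.

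There is no genuine obstacle here: the argument is a short deterministic rearrangement of the three terms in \eqref{eq:risk_l2}, and all the probabilistic content has already been placed in Proposition~\ref{prp:l2}. The one subtlety is bookkeeping of constants — strictly speaking $C_1,C_2$ in \eqref{eq:risk_l2_1} are absolute only once $C$ is regarded as a fixed numerical constant (e.g. the canonical balanced choice $n_0\asymp n/k$), the $C$-dependence entering solely through the factor $c_C=1+\log(1/C)/\log 2$ used to bound $\log(k/C)$ by $c_C\log k$.
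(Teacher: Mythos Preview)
Your proposal is correct and follows exactly the intended approach: the paper does not give an explicit proof of this corollary, treating it as an immediate consequence of Proposition~\ref{prp:l2}, and your argument—bounding $\log(n/n_0)/n_0$ by a constant times $k\log(k)/n$ via the balanced-partition assumption and then absorbing it into $\|\bTheta_0\|_\infty$—is precisely the computation that justifies this. One tiny slip: in your final parenthetical you write $c_C=1+\log(1/C)/\log 2$, but the full constant in the bound $\log(n/n_0)/n_0\le c_C\,k\log(k)/n$ also carries the factor $1/C$ coming from $1/n_0\le k/(Cn)$; this is harmless since you already stated correctly that $c_C$ depends only on $C$.
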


In particular, if $\bTheta_0 \in {\mathcal T_{n_0}}[k]$ (i.e., when we have a $k$-class stochastic block model with balanced communities), the rate of convergence is of the order $\|\bTheta_0\|_{\infty}\left (\frac{\log(k)}{n}+ \frac{k^2}{n^{2}}\right )$. In \cite{gao2014rate}, the rate $\left (\frac{\log(k)}{n}+ \frac{k^2}{n^{2}}\right )$ is shown to be minimax over all stochastic block models with $k$ blocks, with no restriction on $\|\bTheta_0\|_{\infty}$ except the obvious one, $\|\bTheta_0\|_{\infty}\le 1$. We prove in the next subsection that the rate $\|\bTheta_0\|_{\infty}\left (\frac{\log(k)}{n}+ \frac{k^2}{n^{2}}\right )$ obtained in Corollary \ref{cor:1} exhibits the optimal dependency on $\|\bTheta_0\|_{\infty}$. Before doing this, we provide an oracle inequality for the restricted least squares estimator $\widehat{\bTheta}^{r}$.

\begin{proposition}\label{prp:l2_Thres} Consider the network sequence  model.
There exist positive absolute constants $C_1$ and $C_2$ such that the following holds. If $\|\bTheta_0\|_{\infty}\le r$, then \beq\label{eq:risk_l2thres}
 \E\left[\frac{1}{n^{2}}\|\widehat{\bTheta}^{r}- \bTheta_0\|_F^2\right]\leq \frac{C_1}{n^{2}} \|\bTheta_0-\bTheta_*\|_F^2 + C_2 r\left (\frac{\log(k)}{n}+ \frac{k^2}{n^{2}}\right ) \ .
 \eeq
 \end{proposition}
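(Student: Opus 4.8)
The plan is to adapt the proof of Proposition~\ref{prp:l2} to the restricted estimator, exploiting the fact that the $\ell_\infty$ constraint $\|\bQ\|_\infty \le r$ plays the role of the balance restriction: it directly bounds the magnitude of the fitted entries, and hence the relevant variance terms, by $r$ rather than by $\|\bTheta_0\|_\infty$ plus a fluctuation term. First I would write out the basic least squares inequality. Since $\|\bTheta_0\|_\infty \le r$, the pair $(\bQ,z)$ realizing the oracle $\bTheta_*$ (which lies in $\cT[k]$ with entries in $[0,r]$, up to handling the zero diagonal) is feasible for the restricted minimization, so $L(\hat\bQ_r,\hat z_r) \le L(\bQ,z)$. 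Expanding the squares and writing $\bW = \bA - \bTheta_0$ for the centered noise, this yields the standard bound
\beq\label{eq:basic_ineq_thres}
\|\widehat{\bTheta}^r - \bTheta_0\|_F^2 \le \|\bTheta_* - \bTheta_0\|_F^2 + 2\langle \bW, \widehat{\bTheta}^r - \bTheta_*\rangle.
\eeq
The task is then to control the stochastic term $\sup \langle \bW, \bTheta - \bTheta'\rangle$ uniformly over $\bTheta, \bTheta'$ ranging over block-constant matrices with $k$ blocks, arbitrary partitions $z \in \cZ_{n,k}$, and entries bounded by $r$ in absolute value.

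The key step is a uniform deviation bound. I would split the supremum via a peeling/union-bound argument over the (finite) choice of partitions $z, z'$ — there are at most $k^{2n}$ pairs, contributing the $\log k / n$ term after taking logs and dividing by $n^2$ — and, for fixed partitions, over the continuous choice of the $k\times k$ block values, handled by a one-step analysis since the inner product is linear in those values and the constraint set is a bounded cube of dimension $O(k^2)$ (this is where the $k^2/n^2$ term and the extra $r$ factor enter, because the relevant variance proxy for $\langle \bW, \bTheta\rangle$ scales like $r \|\bTheta\|_F \cdot (\text{something})$ after using $\var(\bW_{ij}) = (\bTheta_0)_{ij}(1-(\bTheta_0)_{ij}) \le (\bTheta_0)_{ij} \le r$). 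Concretely, for a fixed partition one writes $\langle \bW, \bTheta - \bTheta'\rangle = \sum_{a,b} (\bQ_{ab}-\bQ'_{ab}) \sum_{z(i)=a, z'(j)=b} \bW_{ij}$, bounds it by Cauchy–Schwarz as $\|\bTheta-\bTheta'\|_F$ times the square root of a sum of squared partial sums of $\bW$, and then uses Bernstein's inequality on each of the $O(k^2)$ partial sums together with a union bound. The variance of each partial sum is at most $r$ times its cardinality, which after reassembly gives a bound of the form $C\sqrt{r}\,\|\bTheta - \bTheta'\|_F \sqrt{\log k \cdot n + k^2 + t}$ with probability $1 - e^{-t}$; absorbing $\|\bTheta-\bTheta'\|_F \le \|\bTheta - \bTheta_0\|_F + \|\bTheta_0 - \bTheta_*\|_F$ into the left side of \eqref{eq:basic_ineq_thres} via the elementary inequality $2ab \le \tfrac12 a^2 + 2b^2$ produces \eqref{eq:risk_l2thres} after integrating the tail.

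I expect the main obstacle to be making the uniform control over \emph{arbitrary} (possibly very unbalanced) partitions work cleanly: unlike in Proposition~\ref{prp:l2}, there is no $n_0$ to tame small communities, so one cannot afford a naive per-block bound that loses factors of $n/n_0$. The resolution is that the $\ell_\infty$ truncation at level $r$ is exactly what replaces that control — because the aggregate $\ell_2$ norm $\|\bTheta - \bTheta'\|_F$ already appears with the right power, the argument never needs to look at individual block sizes, only at the total of $O(k^2)$ Bernstein pieces whose variances sum to at most $r n^2$. One must be slightly careful about the zero-diagonal convention (the diagonal contributes a negligible lower-order term, which can be absorbed into the constants or handled by noting $\bW_{ii} = 0$) and about the fact that $\bTheta_*$ here is defined with $n_0 = 1$, matching the feasible set of the restricted estimator, so no approximation mismatch arises. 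The rest is routine: combine the deviation bound with \eqref{eq:basic_ineq_thres}, rearrange, and take expectations.
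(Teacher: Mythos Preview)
Your basic inequality \eqref{eq:basic_ineq_thres} and the overall outline are fine, but the ``concretely'' step has a genuine gap. After Cauchy--Schwarz you are effectively bounding $\langle \bW,\bTheta\rangle$ by $\|\bTheta\|_F\cdot\|P_z\bW\|_F$ where $\|P_z\bW\|_F^2=\sum_{a,b}S_{ab}^2/(n_an_b)$, and then applying Bernstein to each $S_{ab}$ separately with a union bound over $z$ and $(a,b)$. This forces the deviation level $u\asymp n\log k$ in \emph{every} block, so even the sub-Gaussian part of Bernstein gives $\sum_{a,b}S_{ab}^2/(n_an_b)\lesssim rk^2\cdot n\log k$, not the claimed $r(n\log k+k^2)$ --- you are off by a factor $k^2$ in the regime $k^2\ll n\log k$. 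Worse, the Poisson (linear) term in Bernstein contributes $u^2\sum_{a,b}1/(n_an_b)$, and for unbalanced partitions $\sum_a1/n_a$ can be as large as $k$, so this term is of order $k^2(n\log k)^2$, which is catastrophic. The ``reassembly'' you describe simply does not produce $\sqrt{r}\,\|\bTheta-\bTheta'\|_F\sqrt{n\log k+k^2+t}$.

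The real issue is the one you half-identify: one needs to apply Bernstein \emph{directly} to $\langle \bW,\bTheta\rangle$ (not block by block) while keeping $\|\bTheta\|_\infty\lesssim r$ so that the linear term stays $\lesssim rt$. But the standard covering argument normalizes to $\|\bTheta\|_F=1$, and then net points can have $\|\cdot\|_\infty$ up to $1$, not $r$. The paper resolves this by first splitting $\langle \widehat{\bTheta}^r-\bTheta_0,\bE\rangle$ into $\langle\widetilde{\bTheta}_{\hat z_r}-\bTheta_0,\bE\rangle+\langle\widehat{\bTheta}^r-\widetilde{\bTheta}_{\hat z_r},\bE\rangle$ (as in Proposition~\ref{prp:l2}) and then, for the second piece, building a \emph{multiscale} net $\cC_z^*$ (Lemma~\ref{lem:covering}): for each dyadic Frobenius scale $\epsilon_q$ and each sign pattern $\bU\in\{-1,0,1\}^{k\times k}$, it truncates the rescaled net point entrywise at $\pm 2r$ and overwrites selected blocks by $\pm r$. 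This guarantees that every net element has $\|\cdot\|_\infty\le 2r$ \emph{and} that any $\hat{\bT}$ with $\|\hat{\bT}\|_\infty\le 2r$ has a net neighbor within $\|\hat{\bT}\|_F/4$ in Frobenius norm and within $r$ in sup norm. With this net in hand, Bernstein plus union bound over $|\cC_z^*|\cdot|\cZ_{n,k}|$ gives the correct deviation $\|\bV\|_F\sqrt{r(k^2+n\log k+t)}+r(k^2+n\log k+t)$, and the two-norm approximation property lets you pass from the net back to $\widehat{\bTheta}^r-\widetilde{\bTheta}_{\hat z_r}$ without losing the $r$ factor. This simultaneous $(\|\cdot\|_F,\|\cdot\|_\infty)$-net is the missing idea in your sketch.
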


As opposed to Corollary \ref{cor:1}, the risk bound \eqref{eq:risk_l2thres} is applicable  for unbalanced partitions and for arbitrarily small values of $\|\bTheta_{0}\|_{\infty}$. However, this restricted least squares estimator requires the knowledge of an upper bound of $\|\bTheta_0\|_{\infty}$. Whereas the mean value  of $(\bTheta_0)_{ij}$ is easily inferred from the data, the maximal value $\|\bTheta_0\|_{\infty}$ is difficult to estimate. If the matrix $\bTheta_0$ satisfies the sparse graphon model \eqref{sparse_grapnon_mod}, one can set $r=r_n= u_n \overline{A}$ where $\overline{A}=2\sum_{j<i}\bA_{i,j}/(n(n-1))$ is the edge density of the graph and $u_n$ is any sequence that tends to infinity (for example, $u_n=\log\log n$). For $n$ large enough, $\overline{A}/\rho_n$ is close to $\int_{[0,1]^2} W_{0}(x,y){\rm d}x{\rm d}y$ in the sparse graphon model \eqref{sparse_grapnon_mod} with probability close  to one, and therefore $r_n$ is  greater than $\|\bTheta_0\|_{\infty}$. The price to pay for this data-driven choice of $r_n$ is that the rate in the risk bound \eqref{eq:risk_l2thres} is multiplied by $u_n$.

%
%
\subsection{Stochastic block models}

 Given an integer $k$ and any $\rho_{n}\in (0,1]$, consider the set of all probability matrices corresponding to $k$-class stochastic block model with connection probability uniformly smaller than $\rho_n$:
\beq\label{definition_T_k_rho}
\cT[k,\rho_{n}]= \left\{\bTheta_0\in \cT[k]  :\ \|\bTheta_0\|_{\infty}\leq \rho_{n}\right\}.
\eeq
  Gao et al.~\cite{gao2014rate} have proved that the minimax estimation rate over $\cT[k,1]$ is of order the $\tfrac{k^2}{n^2}+\tfrac{\log(k)}{n}$. The following proposition extends their lower bound to arbitrarily small $\rho_{n}>0$.
 
\begin{proposition}\label{lower_sparse_sbm} Consider the network sequence  model. 
For all $k\leq n$ and all $0<\rho_{n}\le 1$,  
  \beq
 \inf_{\widehat{\bT}}\sup_{\bTheta_0\in \cT[k,\rho_n]}\E_{\bTheta_0}\left[\frac{1}{n^{2}}\|\widehat{\bT}-\bTheta_0\|_F^2\right]
 \geq  
 C\min \Big(\rho_{n}\Big(\frac{\log(k)}{n}+ \frac{k^2}{n^{2}}\Big), \rho_{n}^2\Big)
 \eeq
 where $\E_{\bTheta_0}$ denotes the expectation with respect to the distribution of $\bA$ when the underlying probability matrix is $\bTheta_0$
 and $\inf_{\widehat{\bT}}$ is the infimum over all estimators.
\end{proposition}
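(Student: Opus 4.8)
The plan is to establish two independent Fano-type lower bounds, one responsible for the term $\rho_n k^2/n^2$ and one for $\rho_n\log(k)/n$, each automatically carrying the $\rho_n^2$ cap, and then to combine them via the elementary inequality $\min(a+b,c)\le 2\max\!\big(\min(a,c),\min(b,c)\big)$, valid for $a,b,c\ge 0$. In both constructions all competing matrices have off-diagonal entries in the fixed interval $[\rho_n/3,\,2\rho_n/3]$, so on the one hand they lie in $\cT[k,\rho_n]$, and on the other hand, by the Bernoulli bound $\mathrm{kl}(p,q)\le (p-q)^2/(q(1-q))\le 9(p-q)^2/\rho_n$ for $p,q\in[\rho_n/3,2\rho_n/3]$, a per-entry perturbation of amplitude $\tau$ costs only $O(\tau^2/\rho_n)$ in Kullback--Leibler divergence; this $1/\rho_n$ factor, weighed against the hard constraint $\tau\lesssim\rho_n$, is exactly what produces the $\min(\,\cdot\,,\rho_n^2)$. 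I will use Fano's lemma in the standard packing form: if $\bTheta_1,\dots,\bTheta_M\in\cT[k,\rho_n]$ are pairwise $\ge 2s$ apart in Frobenius norm and the average pairwise KL divergence of the associated product-Bernoulli laws is $\le\frac1{16}\log M$ (with $\log M$ large enough), then $\inf_{\widehat\bT}\max_j\E_{\bTheta_j}\!\big[\tfrac1{n^2}\|\widehat\bT-\bTheta_j\|_F^2\big]\gtrsim s^2/n^2$; when $n$ or $k$ is below an absolute constant the claim is trivial after adjusting $C$ (or from a two-point argument), so I assume $n,k$ large from here on.

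\emph{Construction for $\rho_n k^2/n^2$.} Fix a balanced partition $z^*\in\cZ_{n,k}$ with all classes of size $\asymp n/k$. Index $\omega\in\{0,1\}^D$ by the $D=\binom{k}{2}\asymp k^2$ unordered pairs $\{a,b\}$, $a\ne b$, and set $(\bTheta^{(\omega)})_{ij}=\rho_n/3+\beta\,\omega_{\{z^*(i),z^*(j)\}}$ for $i\ne j$, where $\beta\le\rho_n/3$ is chosen below. By Varshamov--Gilbert there is $\Omega\subseteq\{0,1\}^D$ with $\log|\Omega|\gtrsim D\asymp k^2$ and pairwise Hamming distance $\ge D/8$; since each flipped coordinate $\{a,b\}$ corresponds to $\gtrsim (n/k)^2$ entries of the matrix, $\|\bTheta^{(\omega)}-\bTheta^{(\omega')}\|_F^2\gtrsim D\,(n/k)^2\beta^2\gtrsim n^2\beta^2$, while any two such matrices differ in at most $2n^2$ entries, so their pairwise KL divergence is $\lesssim n^2\beta^2/\rho_n$. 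Taking $\beta^2\asymp\min\!\big(\rho_n k^2/n^2,\ \rho_n^2\big)$ makes the KL divergence $\lesssim k^2\asymp\log|\Omega|$, and Fano yields a minimax lower bound $\gtrsim\beta^2\asymp\min\!\big(\rho_n k^2/n^2,\rho_n^2\big)$.

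\emph{Construction for $\rho_n\log(k)/n$.} Here the hypotheses vary the node-to-class assignment while keeping the block matrix fixed. Split $[n]$ into an ``anchor'' set $T$ of size $\lceil n/2\rceil$ carrying a fixed balanced assignment $z_T\colon T\to[k]$ (classes of size $\asymp n/k$) and a set of $\lfloor n/2\rfloor$ ``movable'' nodes. Fix a symmetric $\bQ_0$ with entries in $\{\rho_n/3,\ \rho_n/3+\gamma\}$, $\gamma\le\rho_n/3$, whose $k$ rows are pairwise at Hamming distance $\ge k/4$ (such a $\bQ_0$ is built directly for small $k$ and exists for large $k$ by a probabilistic/Varshamov--Gilbert argument). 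For an assignment $\zeta$ of the movable nodes into $[k]$ let $\bTheta_\zeta\in\cT[k,\rho_n]$ be the block-constant matrix determined by $z_T$, $\zeta$ and $\bQ_0$. A $k$-ary Varshamov--Gilbert bound produces a family $\cC$ of movable-node assignments with $\log|\cC|\gtrsim n\log k$ and pairwise Hamming distance $\ge\delta n$ for an absolute $\delta>0$. If $\zeta,\zeta'\in\cC$ differ at $s\ge\delta n$ movable nodes, then moving one such node $i$ from class $a$ to class $a'$ changes the entry $(i,j)$ at every anchor $j\in T$ whose class $z_T(j)$ is one of the $\ge k/4$ coordinates where rows $a,a'$ of $\bQ_0$ disagree; by balancedness of $z_T$ this is $\gtrsim (k/4)(n/2k)\gtrsim n$ entries, each changing by $\gamma$, and these entry-sets are disjoint across the $s$ moved nodes, so $\|\bTheta_\zeta-\bTheta_{\zeta'}\|_F^2\gtrsim s\,n\,\gamma^2\ge\delta n^2\gamma^2$, while $\bTheta_\zeta$ and $\bTheta_{\zeta'}$ differ in at most $2sn\le n^2$ entries, whence pairwise KL divergence $\lesssim n^2\gamma^2/\rho_n$. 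Choosing $\gamma^2\asymp\min(\rho_n\log(k)/n,\ \rho_n^2)$ turns this into $\lesssim n\log k\asymp\log|\cC|$, and Fano gives a lower bound $\gtrsim\gamma^2\asymp\min(\rho_n\log(k)/n,\rho_n^2)$.

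Since both families lie in $\cT[k,\rho_n]$, the minimax risk is at least a constant times $\max\!\big(\min(\rho_n k^2/n^2,\rho_n^2),\ \min(\rho_n\log(k)/n,\rho_n^2)\big)\ge\tfrac12\min\!\big(\rho_n(\log(k)/n+k^2/n^2),\ \rho_n^2\big)$, which is the claim. I expect the second construction to require the most care: one must produce a $k$-ary Varshamov--Gilbert code of the stated size and minimum distance, verify that the ``$\gtrsim n$ changed entries per moved node'' estimate holds uniformly (the anchor device is meant to make this robust, so that no balancedness of $\zeta$ is needed), and pin down the absolute constants ($\delta$, the Fano threshold, and the amplitudes $\beta,\gamma$) so that the KL budgets match $\log M$ simultaneously; by contrast the KL estimates and the amplitude-versus-budget trade-off behind the $\rho_n^2$ cap, as well as the small-$n$/small-$k$ corner cases, are routine.
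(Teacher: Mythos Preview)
Your proposal is correct and follows essentially the same route as the paper: split the bound into the $\rho_n k^2/n^2$ and $\rho_n\log(k)/n$ pieces, use the Bernoulli KL estimate $\cK(\rho_n p,\rho_n q)\lesssim\rho_n(p-q)^2$ for $p,q$ bounded away from $0$ and $1$, and apply Fano with Varshamov--Gilbert packings---varying block values on a fixed balanced partition for the first term and varying node-to-class assignments against a fixed row-separated $\bQ$ for the second. The paper's proof is terse and defers the constructions to Gao et al.\ \cite{gao2014rate}, recording only the scaled perturbation amplitudes $c\sqrt{\rho_n}\,(\tfrac{k}{n}\wedge\sqrt{\rho_n})$ and $c\sqrt{\rho_n}\,(\sqrt{\log(k)/n}\wedge\sqrt{\rho_n})$, which are exactly your $\beta$ and $\gamma$.
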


If $\rho_{n}\geq \tfrac{\log(k)}{n}+ \tfrac{k^2}{n^{2}}$, the minimax rate of estimation is of the order $\rho_{n}\left (\frac{\log(k)}{n}+ \frac{k^2}{n^{2}}\right )$. This rate is achieved by the restricted least squares estimator with $r\asymp \rho_{n}$ and by the least squares estimator if the partition is balanced and $\rho_{n} \geq k\log(k)/n$. For really sparse graphs ($\rho_{n}$ smaller than $\tfrac{\log(k)}{n}+ \tfrac{k^2}{n^{2}}$), the estimation problem becomes rather trivial since both the null  estimator $\widehat{\bT}=0$ and the constant least squares estimator $\widehat{\bTheta}$ with all entries   $\widehat{\bTheta}_{ij}= \overline{A}$ achieve the optimal rate $\rho_{n}^2$.

\subsection{Smooth graphons}

We now use Propositions \ref{prp:l2} and \ref{prp:l2_Thres} to obtain rates of convergence for the probability matrix estimation when $\bTheta_0$ satisfies the sparse graphon model \eqref{sparse_grapnon_mod} and the graphon $W_{0}\in \cW$ is smooth.  For any $\alpha>0$, $L>0$, define the class of $\alpha$-H\"{o}lder continuous functions $\Sigma(\alpha, L)$ as the set of all functions $W:[0,1]^2\to [0,1]$ such that 
$$
\vert W(x',y')-\cP_{\lfloor \alpha\rfloor}((x,y), (x'-x,y'-y))   \vert \le L (\vert x'-x\vert^{\alpha-\lfloor \alpha\rfloor}+\vert y'-y\vert^{\alpha-\lfloor \alpha\rfloor})
$$
for all $(x',y'), (x,y) \in [0,1]^2$, where $\lfloor \alpha\rfloor$ is the maximal integer less than $\alpha$, and the function $(x',y')\mapsto \cP_{\lfloor \alpha\rfloor}((x,y), (x'-x,y'-y))$ is the Taylor polynomial of degree $\lfloor \alpha\rfloor$ at point $(x,y)$. It follows from the standard embedding theorems that, for any $W\in \Sigma(\alpha, L)$ and any $(x',y'), (x,y) \in [0,1]^2$,
\begin{equation}\label{lip}
\vert W(x',y') - W(x,y)\vert \le M (\vert x'-x\vert^{\alpha\wedge 1}+\vert y'-y\vert^{\alpha\wedge 1})
\end{equation}
where $M$ is a constant depending only on $\alpha$ and $L$. In the following, we will use only this last property of $W\in \Sigma(\alpha, L)$.

\smallskip

The following two propositions give bounds on the bias terms $\|\bTheta_0-\bTheta_{*,n_0}\|_F^2$ and $\|\bTheta_0-\bTheta_{*}\|_F^2$.

\begin{proposition}\label{approx_sbm_balanced} Consider the graphon model \eqref{sparse_grapnon_mod} with $W_{0}\in \Sigma(\alpha, L)$ where $\alpha, L>0$.
Let $n_0\geq 2$ and $k=\lfloor n/n_0\rfloor$. Then, 
  \beq\label{eq:risk_SBM_approx_balanced}
 \E\left (\frac{1}{n^{2}}\|\bTheta_0-\bTheta_{*,n_0}\|_F^2\right )\leq CM^{2}\rho^{2}_{n}\left (\dfrac{1}{k^{2}}\right )^{\alpha\wedge 1}.
  \eeq
  \end{proposition}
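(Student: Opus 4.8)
The plan is to compare $\bTheta_0$ not with the optimal block-constant approximation $\bTheta_{*,n_0}$ but with a concretely constructed block-constant matrix $\bar\bTheta$, since by definition of $\bTheta_{*,n_0}$ as the best Frobenius approximation we have $\|\bTheta_0-\bTheta_{*,n_0}\|_F^2 \le \|\bTheta_0-\bar\bTheta\|_F^2$ for any $\bar\bTheta$ of the required form. The natural choice is the partition into consecutive blocks determined by the \emph{sorted} latent variables: let $\xi_{(1)}\le\cdots\le\xi_{(n)}$ be the order statistics of $\xi_1,\dots,\xi_n$, and define $z$ by assigning to group $a\in[k]$ the indices $i$ whose rank falls in $\{(a-1)\lceil n/k\rceil+1,\dots,a\lceil n/k\rceil\}$ (with the obvious truncation for the last group). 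With $k=\lfloor n/n_0\rfloor$ one checks each group has size between $n_0$ and roughly $2n_0$, hence $z\in\cZ_{n,k,n_0}$. For the block values one can simply take $\bar\bTheta_{ij}=\rho_n W_0(\xi_{i^*},\xi_{j^*})$ for one representative pair, or an average; the precise choice is immaterial for the rate.

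The key step is to bound $\mathbb{E}[\|\bTheta_0-\bar\bTheta\|_F^2] = \sum_{i\neq j}\mathbb{E}[(\rho_n W_0(\xi_i,\xi_j)-\bar\bTheta_{ij})^2]$. Using the Lipschitz-type property~\eqref{lip} of $W_0\in\Sigma(\alpha,L)$, for $i,j$ in blocks $a,b$ with representatives having ranks within the same blocks, $|W_0(\xi_i,\xi_j)-W_0(\xi_{i^*},\xi_{j^*})| \le M(|\xi_i-\xi_{i^*}|^{\alpha\wedge1}+|\xi_j-\xi_{j^*}|^{\alpha\wedge1})$, so
\[
\mathbb{E}[(\rho_n W_0(\xi_i,\xi_j)-\bar\bTheta_{ij})^2] \le 2M^2\rho_n^2\big(\mathbb{E}|\xi_i-\xi_{i^*}|^{2(\alpha\wedge1)}+\mathbb{E}|\xi_j-\xi_{j^*}|^{2(\alpha\wedge1)}\big).
\]
Now $\xi_i$ and $\xi_{i^*}$ lie in a common block of consecutive order statistics of width $\lceil n/k\rceil\approx n_0$, so $|\xi_i-\xi_{i^*}|$ is at most the spacing $\xi_{(\lceil an/k\rceil)}-\xi_{(\lceil(a-1)n/k\rceil+1)}$ between the endpoints of that block. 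The spacing between order statistics $m$ apart out of $n$ uniforms has a Beta$(m,n-m+1)$ distribution with mean $\asymp m/n$ and all moments of the right order; taking $m\asymp n/k$ gives $\mathbb{E}|\xi_i-\xi_{i^*}|^{2(\alpha\wedge1)} \le C(1/k)^{2(\alpha\wedge1)}$. Since $(1/k)^{2(\alpha\wedge1)}=\big((1/k^2)\big)^{\alpha\wedge1}$, summing over the $\le n^2$ pairs and dividing by $n^2$ yields exactly $CM^2\rho_n^2(1/k^2)^{\alpha\wedge1}$.

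I expect the main technical point to be the clean control of the order-statistic spacings: one must be slightly careful that blocks are defined by ranks (not by fixed sub-intervals of $[0,1]$), so that $\xi_i$ and its representative are automatically within one block-width of each other deterministically, and only then take expectations of the spacing. The Beta moment bound $\mathbb{E}[(\xi_{(i+m)}-\xi_{(i)})^p]\le C_p(m/n)^p$ is standard (e.g.\ via the Beta$(m,n-m+1)$ representation, or by the stochastic domination $\xi_{(i+m)}-\xi_{(i)}\lsto \xi_{(m)}$), and the rest is a routine summation. A minor edge case is the behavior when $\alpha\ge1$, where $\alpha\wedge1=1$ and only the first-order Lipschitz bound~\eqref{lip} is used, exactly as the paper notes right after~\eqref{lip}; no higher smoothness of $W_0$ is exploited here, consistent with the stated rate saturating at $k^{-2}$.
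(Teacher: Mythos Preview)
Your proposal is essentially the paper's own argument: construct a rank-based partition of the latent variables into $k$ consecutive groups of size $\approx n_0$, define a block-constant $\bar\bTheta$ (the paper uses block averages rather than a single representative, but either works), and control each entry via \eqref{lip} together with the Beta$(s,n-s+1)$ law of the spacing $\xi_{(i+s)}-\xi_{(i)}$; the paper records the needed second-moment bound as Lemma~\ref{lemma_diff_ordered_stat} and then passes to the $2(\alpha\wedge1)$ moment by Jensen, exactly as you outline.

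One small slip to fix: your partition with groups of size $\lceil n/k\rceil$ can leave the last group with fewer than $n_0$ elements (e.g., $n=10$, $n_0=3$, $k=3$ gives groups of sizes $4,4,2$), so $z\notin\cZ_{n,k,n_0}$ and $\bar\bTheta\notin\cT_{n_0}[k]$. The paper instead assigns the first $k-1$ groups exactly $n_0$ elements each and puts the remaining $n_0+r$ elements (with $0\le r<n_0$) into the last group, which guarantees all group sizes lie in $[n_0,2n_0)$.
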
 
We will also need the following proposition proved in \cite{gao2014rate}, Lemma 2.1.

\begin{proposition}
\label{prp:gao}
Consider the graphon model \eqref{sparse_grapnon_mod}
with  $W_{0} \in \Sigma(\alpha, L)$ where $\alpha, L>0$.
   Then,
   almost surely,
  \beq\label{eq:risk_SBM_approx}
 \dfrac{1}{n^{2}}\|\bTheta_0- \bTheta_*\|_F^2\leq CM^{2}\rho^{2}_{n}\left (\dfrac{1}{k^{2}}\right )^{\alpha\wedge 1}.
  \eeq
  \end{proposition}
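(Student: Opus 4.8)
The plan is to exhibit one explicit block-constant matrix $\bTheta\in\cT[k]$ satisfying $\frac1{n^2}\|\bTheta_0-\bTheta\|_F^2\le 4M^2\rho_n^2(1/k^2)^{\alpha\wedge 1}$; since $\bTheta_*$ is by definition the Frobenius-closest element of $\cT[k]$ to $\bTheta_0$, the same bound then holds a fortiori for $\|\bTheta_0-\bTheta_*\|_F^2$. Because the construction below depends only on the realized values $\xi_1,\dots,\xi_n$ and never on their distribution, the resulting inequality is in fact deterministic, hence holds almost surely.

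First I would split $[0,1]$ into the $k$ consecutive intervals $I_a=[(a-1)/k,\,a/k)$ for $a<k$ and $I_k=[(k-1)/k,\,1]$, each of length $1/k$, choose a reference point $x_a\in I_a$ (say the midpoint), and define $z\in\cZ_{n,k}$ by $z(i)=a$ whenever $\xi_i\in I_a$. No surjectivity of $z$ is required here, so intervals containing no $\xi_i$ cause no difficulty: their labels are simply unused. Then I would set $\bQ_{ab}=\rho_n W_0(x_a,x_b)$, which is symmetric because $W_0$ is, and take $\bTheta$ to be the block-constant matrix with $\bTheta_{ij}=\bQ_{z(i)z(j)}$ for $i\ne j$ and $\bTheta_{ii}=0$; by construction $\bTheta\in\cT[k]$, and its diagonal matches that of $\bTheta_0$.

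Next I would bound the error entrywise. For $i\ne j$ with $z(i)=a$, $z(j)=b$, we have $\xi_i,x_a\in I_a$ and $\xi_j,x_b\in I_b$, so $|\xi_i-x_a|\le 1/k$ and $|\xi_j-x_b|\le 1/k$, and the H\"older property \eqref{lip} gives
\[
\bigl|(\bTheta_0)_{ij}-\bTheta_{ij}\bigr| = \rho_n\bigl|W_0(\xi_i,\xi_j)-W_0(x_a,x_b)\bigr| \le \rho_n M\bigl(|\xi_i-x_a|^{\alpha\wedge 1}+|\xi_j-x_b|^{\alpha\wedge 1}\bigr) \le 2M\rho_n\Bigl(\tfrac1k\Bigr)^{\alpha\wedge 1}.
\]
The diagonal terms of $\bTheta_0$ and $\bTheta$ both vanish and contribute nothing. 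Summing the squares over the at most $n^2$ pairs $(i,j)$ yields $\|\bTheta_0-\bTheta\|_F^2\le 4M^2\rho_n^2\,n^2\,(1/k^2)^{\alpha\wedge 1}$; dividing by $n^2$ and invoking the minimality of $\bTheta_*$ gives the claim with $C=4$.

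There is no serious obstacle: this is just the deterministic approximation of a H\"older function on $[0,1]^2$ by a step function adapted to a uniform grid, transported to the matrix $\bTheta_0$ via $(\bTheta_0)_{ij}=\rho_n W_0(\xi_i,\xi_j)$. The only points that deserve a careful sentence are that the estimate is pathwise (hence almost sure), that non-surjective partitions are permitted so empty grid cells are harmless, and that the zero-diagonal convention is shared by $\bTheta_0$ and the approximant, so it drops out of the Frobenius distance.
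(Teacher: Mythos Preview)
Your proof is correct. The paper does not give its own proof of this proposition; it simply cites Lemma~2.1 of Gao, Lu and Zhou~\cite{gao2014rate}, whose argument is essentially the one you wrote: partition $[0,1]$ into $k$ equal-length intervals, assign each $\xi_i$ to the interval it falls in, approximate $W_0$ by its values at reference points, and bound each off-diagonal entry via the H\"older condition~\eqref{lip}. The explicit constant $C=4$ is fine.

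One small point worth tightening: the paper's convention is $\cZ_{n,k}=\cZ_{n,k,1}$, i.e.\ every label must be hit at least once, so your $z$ need not literally belong to $\cZ_{n,k}$ when some interval contains no $\xi_i$. Your remark that this is ``harmless'' is right, but the justification is that if only $k'\le k$ intervals are occupied, the matrix $\bTheta$ you build lies in $\cT[k']$, and since $n\ge k$ one can always refine a $k'$-block partition with non-empty blocks into a $k$-block partition with non-empty blocks without changing the block-constant matrix; hence $\cT[k']\subset\cT[k]$ and $\bTheta\in\cT[k]$ as required.
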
 
  
  \begin{Corollary}\label{prp:empirical_smooth_raphon} Consider the graphon model \eqref{sparse_grapnon_mod} with   $W_{0}\in \Sigma(\alpha, L)$ where $\alpha, L>0$ and $4/n<\rho_{n}\le 1$. Set $k=\Big\lceil \left (\rho_n^{1/2}n\right )^{\frac{1}{1+\alpha\wedge 1}} \Big\rceil$ if $\rho_n\geq n^{(\alpha\wedge 1)-1}$  and $k=\Big\lceil \left (\rho_n n\right )^{\frac{1}{2(\alpha\wedge 1)}} \Big\rceil$ if $\rho_n< n^{(\alpha\wedge 1)-1}$
  	\begin{itemize}
  		\item[(i)] Assume that $\rho_n\geq n^{-\frac{2(\alpha\wedge 1)}{1+2(\alpha\wedge 1)}}\left (\log (n\rho_n) \right )^{\frac{2(1+\alpha\wedge 1)}{1+2(\alpha\wedge 1) }}$	  and there exists $n_0\ge 2$ such that $k=\lfloor n/n_0\rfloor$. Then
  		there exists a positive constant $C$ depending only  on $L$ and $\alpha$ such that the least squares estimator $\widehat{\bTheta}$ constructed with this choice of $n_0$ satisfies
  		\beq\label{eq:risk_empirical_smooth_1}
  		\E\left[\dfrac{1}{n^{2}}\|\widehat{\bTheta} -\bTheta_0\|_F^2 \right] \leq C\left \{ \rho_{n}^{\frac{2+\alpha\wedge 1}{1+\alpha\wedge 1}}n^{-\frac{2(\alpha\wedge 1)}{1+\alpha\wedge 1}} + \dfrac{\rho_n\log  (\rho_n n)}{n}\right\} .
  		\eeq
  		\item[(ii)] Assume that $r\geq \rho_{n}\geq Cn^{-1}$. Then
  		there exists a positive constant $C$ depending only on $L$ and $\alpha$ such that the restricted least squares estimator $\widehat{\bTheta}^r$  satisfies
  		\beq\label{eq:risk_empirical_smooth}
  		\E\left[\dfrac{1}{n^{2}}\|\widehat{\bTheta}^r -\bTheta_0\|_F^2 \right] \leq C\left \{ r^{\frac{2+\alpha\wedge 1}{1+\alpha\wedge 1}}n^{-\frac{2(\alpha\wedge 1)}{1+\alpha\wedge 1}} + \dfrac{r\log (\rho_n n)}{n}\right\} .
  		\eeq
  	\end{itemize}
  	
  \end{Corollary}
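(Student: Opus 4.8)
The plan is to plug the oracle inequalities of Section~\ref{sec:proba_matrix} into the smooth-graphon bias bounds of Propositions~\ref{approx_sbm_balanced} and~\ref{prp:gao} and then optimise over $k$. Throughout, write $\beta:=\alpha\wedge 1$. In model~\eqref{sparse_grapnon_mod} we have $(\bTheta_0)_{ij}=\rho_nW_0(\xi_i,\xi_j)\in[0,\rho_n]$, hence $\|\bTheta_0\|_\infty\le\rho_n$ deterministically, so all hypotheses on $\|\bTheta_0\|_\infty$ (and, in~(ii), $\|\bTheta_0\|_\infty\le\rho_n\le r$) are met. The announced $k\asymp(\rho_n^{1/2}n)^{1/(1+\beta)}$ is exactly the value balancing the bias $\rho_n^2k^{-2\beta}$ against the variance $\rho_nk^2/n^2$: for this $k$ both are of order $\rho_n^{(2+\beta)/(1+\beta)}n^{-2\beta/(1+\beta)}$, which is the first term in~\eqref{eq:risk_empirical_smooth_1}--\eqref{eq:risk_empirical_smooth}, while $\rho_n\log k/n\le\rho_n\log n/n$ gives the second.

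For part~(i), I would apply Proposition~\ref{prp:l2} conditionally on ${\boldsymbol\xi}$ with the chosen $n_0\ge2$ (for which $k=\lfloor n/n_0\rfloor$, so $n_0\asymp n/k$), then take expectation over ${\boldsymbol\xi}$ and bound $\E[\frac{1}{n^2}\|\bTheta_0-\bTheta_{*,n_0}\|_F^2]$ by Proposition~\ref{approx_sbm_balanced}; since the constant $M$ there depends only on $\alpha,L$, the resulting constant depends only on $\alpha,L$, and
\[
\E\left[\frac{1}{n^2}\|\widehat{\bTheta}-\bTheta_0\|_F^2\right]\le C\rho_n^2k^{-2\beta}+C\rho_n\left(\frac{\log k}{n}+\frac{k^2}{n^2}\right)+C\,\frac{\log(n/n_0)}{n_0}\left(\frac{\log k}{n}+\frac{k^2}{n^2}\right).
\]
Substituting $k\asymp(\rho_n^{1/2}n)^{1/(1+\beta)}$ handles the first two terms as above. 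For the third: with $n_0\asymp n/k$ it is of order $\frac{k\log k}{n}\bigl(\frac{\log k}{n}+\frac{k^2}{n^2}\bigr)$, hence it is absorbed into the second term once $k\log n\lesssim n\rho_n$; raising this inequality to the power $1+\beta$ and inserting the value of $k$, it is equivalent to $\rho_n\gtrsim(\log n)^{2(1+\beta)/(1+2\beta)}n^{-2\beta/(1+2\beta)}$, which is implied by the assumed $\rho_n\ge n^{-2\beta/(1+2\beta)}(\log n)^{2(1+\beta)}$ because $\frac{2(1+\beta)}{1+2\beta}\le2(1+\beta)$ and $\log n\ge1$. Collecting the terms yields~\eqref{eq:risk_empirical_smooth_1}.

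For part~(ii), there is no balance restriction. I would apply Proposition~\ref{prp:l2_Thres} and bound the bias by Proposition~\ref{prp:gao}, which gives $\frac{1}{n^2}\|\bTheta_0-\bTheta_*\|_F^2\le C\rho_n^2k^{-2\beta}$ almost surely, so no expectation over ${\boldsymbol\xi}$ is needed and
\[
\E\left[\frac{1}{n^2}\|\widehat{\bTheta}^{r}-\bTheta_0\|_F^2\right]\le C\rho_n^2k^{-2\beta}+Cr\left(\frac{\log k}{n}+\frac{k^2}{n^2}\right).
\]
Inserting the same $k$ and using $\rho_n\le r$ to replace $\rho_n$ by $r$ wherever it carries a positive exponent gives $\rho_n^2k^{-2\beta}\le r^{(2+\beta)/(1+\beta)}n^{-2\beta/(1+\beta)}$, $r\,k^2/n^2\le r^{(2+\beta)/(1+\beta)}n^{-2\beta/(1+\beta)}$, and $r\log k/n\le r\log n/n$, i.e.\ exactly~\eqref{eq:risk_empirical_smooth}. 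The hypothesis $\rho_n\ge Cn^{-2}$ is used only to ensure $\rho_n^{1/2}n>1$, hence $k\ge2$, so that $\cZ_{n,k}$ is nonempty and Proposition~\ref{prp:l2_Thres} applies.

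I expect the only genuinely delicate point to be the bookkeeping in part~(i): one must verify that the balanced choice $n_0\asymp n/k$ keeps the extra third term of Proposition~\ref{prp:l2} below the variance term, and that this is precisely what the stated lower bound on $\rho_n$ provides. Everything else is substitution of the optimal $k$ and monotonicity in $r$.
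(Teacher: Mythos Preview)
Your proposal is correct and follows essentially the same route as the paper: combine the oracle inequalities (Propositions~\ref{prp:l2} and~\ref{prp:l2_Thres}) with the bias bounds (Propositions~\ref{approx_sbm_balanced} and~\ref{prp:gao}), then plug in the optimal~$k$. The only cosmetic difference is that for part~(i) the paper routes through Corollary~\ref{cor:1} (checking $\rho_n\ge k\log(k)/n$ so that the third term of Proposition~\ref{prp:l2} is already absorbed into the second), whereas you keep all three terms of Proposition~\ref{prp:l2} and absorb the third one explicitly via the computation $k\log n\lesssim n\rho_n$; these are the same inequality and your derivation of it from the stated lower bound on $\rho_n$ is correct.
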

  Corollary \ref{prp:empirical_smooth_raphon} extends the results obtained in Gao et al.~\cite{gao2014rate} to arbitrary $\rho_n\in (0,1]$.  To simplify the discussion assume that $\alpha\leq 1$. There are two
  ingredients in the rates of convergence in Corollary~\ref{prp:empirical_smooth_raphon}, the nonparametric rate $\rho_n^{\frac{2+\alpha}{1+\alpha}}n^{-\frac{2\alpha}{1+\alpha}}$ and the clustering rate $\tfrac{\rho_n\log (\rho_n n)}{n}$.  The smoothness index $\alpha$  has an impact on the rate only if $\alpha\in (0,1)$ and only if the network is not too sparse, that is if $\rho_n\geq Cn^{\alpha-1}(\log \rho_n n))^{1+\alpha}$.
  
  In~\cite{gao2014rate}, Gao et al. prove a lower bound showing that the rate $n^{-\frac{2(\alpha\wedge 1)}{1+\alpha\wedge 1}}+  \tfrac{\log(n)}{n}$ is optimal if $\rho_n=1$. Following the same lines as in Proposition \ref{lower_sparse_sbm}, one can readily extend their result to prove that the rate in \eqref{eq:risk_empirical_smooth} is minimax optimal for $\rho_n\geq Cn^{-1}$.

\section{Graphon estimation problem}\label{sec:graphon}
 In this section, our  purpose is to estimate the graphon function $W_{0}(\cdot,\cdot)$ in the sparse graphon model \eqref{sparse_grapnon_mod}.
 \subsection{From probability matrix estimation to graphon estimation}
We start by associating a graphon to any $n\times n$ probability matrix $\bTheta$. This provides a way of deriving an estimator of $f_{0}(\cdot,\cdot)= \rho_n W_{0}(\cdot,\cdot)$ from an estimator of $\bTheta_0$.


 Given a $n\times n$ matrix $\bTheta$ with entries in $[0,1]$, define the {\it empirical graphon} $\widetilde{f}_{\bTheta}$ as the following piecewise constant function:
 \beq\label{eq:empirical_graphon}
 \widetilde{f}_{\bTheta}(x,y)= \bTheta_{\lceil nx\rceil, \lceil ny \rceil}
 \eeq
 for all  $x$ and $y$ in $(0,1]$.  
%
 The empirical graphons associated to the least squares estimator and to the  restricted least squares estimator with threshold $r$ will be denoted by $\widehat f$  and $\widehat f_r$ respectively:
 $$
 \widehat{f} =  \widetilde f_{\widehat\bTheta}\,, \qquad \widehat f_r = \widetilde f_{{\widehat \bTheta}^r}.
 $$
 They will be used as graphon estimators.
 For any estimator $\check f$ of $f_0=\rho_n W_0$, define the squared error
\begin{equation}\label{definition_delta_2}
\delta^2(\check{f},f_0):= \underset{\tau\in \mathcal M}{\inf}\int \int_{(0,1)^{2}}\vert f_0(\tau(x),\tau(y))-\check f(x,y)\vert^{2}\mathrm{d}x\mathrm{d}y
\end{equation}
 where $\mathcal M$ is the set of all measure-preserving bijections $\tau : [0,1]\rightarrow [0,1]$. It has been proved in~\citep[Ch.8,13]{LovaszBook} that    $\delta(\cdot,\cdot)$ defines a metric on the quotient space $\widetilde{\cW}$ of  graphons.  
 
 \smallskip
 
 The following lemma is a simple consequence of the triangle inequality. 
\begin{lemma}\label{lem:upper_risk_graphon} Consider the graphon model \eqref{sparse_grapnon_mod}.
For any $W_0\in \cW$, $\rho_n>0$, and any estimator $\widehat{\bT}$ of $\bTheta_0$ such that $\widehat{\bT}$ is an $n\times n$ matrix with entries in $[0,1]$, we have
\beq\label{eq:integrated_risk_decomposition}
\E\left[\delta^2(\widetilde{f}_{\widehat{\bT}}, f_0)\right]\leq 2\E\left[\frac{1}{n^2}\|\widehat{\bT}-\bTheta_0\|_F^2\right] +2\E\left[\delta^2\left(\widetilde{f}_{\bTheta_0} , f_0\right)\right].
\eeq
\end{lemma}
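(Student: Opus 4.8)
The plan is to exploit the fact that $\delta(\cdot,\cdot)$ is a genuine metric on the quotient space $\widetilde{\cW}$ of graphons, so the triangle inequality applies to the three graphons $\widetilde{f}_{\widehat{\bT}}$, $\widetilde{f}_{\bTheta_0}$, and $f_0=\rho_n W_0$. First I would write
\[
\delta(\widetilde{f}_{\widehat{\bT}}, f_0)\leq \delta(\widetilde{f}_{\widehat{\bT}}, \widetilde{f}_{\bTheta_0}) + \delta(\widetilde{f}_{\bTheta_0}, f_0),
\]
and then square both sides, using the elementary inequality $(a+b)^2\leq 2a^2+2b^2$ to obtain
\[
\delta^2(\widetilde{f}_{\widehat{\bT}}, f_0)\leq 2\,\delta^2(\widetilde{f}_{\widehat{\bT}}, \widetilde{f}_{\bTheta_0}) + 2\,\delta^2(\widetilde{f}_{\bTheta_0}, f_0).
\]

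The second step is to bound the first term on the right by the matrix Frobenius distance. Since the empirical graphon map $\bTheta\mapsto \widetilde{f}_{\bTheta}$ sends both $\widehat{\bT}$ and $\bTheta_0$ to piecewise-constant functions on the same regular $n\times n$ grid of cells of area $1/n^2$, taking $\tau$ to be the identity in the infimum defining $\delta^2$ gives
\[
\delta^2(\widetilde{f}_{\widehat{\bT}}, \widetilde{f}_{\bTheta_0})\leq \int\!\!\int_{(0,1)^2}\bigl|\widetilde{f}_{\widehat{\bT}}(x,y)-\widetilde{f}_{\bTheta_0}(x,y)\bigr|^2\,\mathrm{d}x\,\mathrm{d}y = \frac{1}{n^2}\sum_{i,j}\bigl(\widehat{\bT}_{ij}-(\bTheta_0)_{ij}\bigr)^2 = \frac{1}{n^2}\|\widehat{\bT}-\bTheta_0\|_F^2.
\]
Then I would take expectations of the squared triangle inequality and substitute this bound into the first term, which yields exactly \eqref{eq:integrated_risk_decomposition}.

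There is essentially no serious obstacle here; the only point requiring a little care is the measurability/integrability needed to justify taking expectations — but since $\widehat{\bT}$ has entries in $[0,1]$, every quantity involved is bounded, so this is routine. A minor subtlety worth stating explicitly is that the identity $\tau=\mathrm{id}$ is admissible in the infimum over $\mathcal{M}$ (it is a measure-preserving bijection), which is what licenses the inequality in the second step; one does not need the two empirical graphons to be aligned optimally, only to have some common admissible alignment. Thus the proof reduces to the triangle inequality for the metric $\delta$ together with one explicit evaluation of the integral over the grid, and no heavier machinery is needed.
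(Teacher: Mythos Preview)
Your proof is correct and follows exactly the approach the paper indicates: the paper states only that the lemma ``is a simple consequence of the triangle inequality,'' and your argument spells this out by applying the triangle inequality for the metric $\delta$, squaring via $(a+b)^2\le 2a^2+2b^2$, and bounding $\delta^2(\widetilde{f}_{\widehat{\bT}},\widetilde{f}_{\bTheta_0})$ by $n^{-2}\|\widehat{\bT}-\bTheta_0\|_F^2$ using the identity bijection in the infimum.
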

 The bound on the integrated risk in \eqref{eq:integrated_risk_decomposition}  is a sum of two terms. The first term containing $\|\widehat{\bT}-\bTheta_0\|_F^2$  has been considered in Section \ref{sec:proba_matrix} for $\widehat{\bT}=\widehat{\bTheta}$ and $\widehat{\bT}=\widehat{\bTheta}^r$. It is the {\it estimation error} term. The second term containing $\delta^2(\widetilde{f}_{\bTheta_0} , f_0)$  measures the distance between the true graphon $f_0$ and its discretized version sampled at the unobserved random design points $\xi_1,\ldots , \xi_n$. We call it the {\it agnostic error}. The behavior of $\delta^2(\widetilde{f}_{\bTheta_0} , f_0)$ depends on the topology of the considered graphons as shown below for two examples, namely, the step function graphons and the smooth graphons.

\subsection{Step function graphons}
 
Define $\cW[k]$ the collection of $k$-step graphons, that is the subset of graphons $W\in \cW$ such that for some $\bQ\in \bbR^{k\times k}_{\text{sym}}$ and some $\phi:[0,1]\to [k]$, 
\beq\label{eq:def_step_function}
W(x,y)= \bQ_{\phi(x),\phi(y)}\quad \text{  for all }x,y\in [0,1]\ .
\eeq
A step function $W\in \cW[k]$ is called balanced if $\lambda\left (\phi^{-1}(1)\right )= \ldots =  \lambda\left (\phi^{-1}(k)\right )=1/k$ where $\lambda$ is the Lebesgue measure on $[0,1]$. The agnostic error associated to step function graphons is evaluated as follows.

\begin{proposition}\label{prp:bias_sbm} Consider the graphon model \eqref{sparse_grapnon_mod}.
For all integers $k\le n$,  $W_0\in \cW[k]$ and $\rho_n>0$ we have
\[\E\left[\delta^2\left(\widetilde{f}_{\bTheta_0} , f_0\right)\right]\leq C \rho_n^2  \sqrt{\frac{k}{n}}\ .
\]
\end{proposition}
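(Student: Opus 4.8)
The plan is to bound the agnostic error $\E[\delta^2(\widetilde f_{\bTheta_0}, f_0)]$ by exhibiting an explicit measure-preserving map $\tau$ that rearranges the interval $[0,1]$ so that the step structure of $f_0 = \rho_n W_0$ is aligned, as much as possible, with the grid $\{(i-1)/n, i/n\}$ underlying the empirical graphon $\widetilde f_{\bTheta_0}$. Recall that $W_0 \in \cW[k]$ means $W_0(x,y) = \bQ_{\phi(x),\phi(y)}$ for some partition of $[0,1]$ into cells $\phi^{-1}(1),\ldots,\phi^{-1}(k)$, and that $(\bTheta_0)_{ij} = \rho_n \bQ_{\phi(\xi_i),\phi(\xi_j)}$ for $i\ne j$, with $\xi_1,\ldots,\xi_n$ i.i.d.\ uniform. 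The key observation is that $\widetilde f_{\bTheta_0}$ is the block-constant function whose value on the square $((i-1)/n,i/n)\times((j-1)/n,j/n)$ is $\rho_n\bQ_{\phi(\xi_i),\phi(\xi_j)}$ (off the diagonal blocks $i=j$, which contribute a negligible $O(\rho_n^2/n)$ to the $L_2$ error and can be discarded up front).

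First I would reduce to a combinatorial statement about the empirical community sizes. Let $N_a = |\{i : \phi(\xi_i) = a\}|$ be the number of sampled points falling in cell $a$, and let $p_a = \lambda(\phi^{-1}(a))$. The strategy is: relabel the nodes so that all nodes with $\phi(\xi_i)=1$ come first, then those with $\phi(\xi_i)=2$, and so on; correspondingly, choose $\tau$ to be the measure-preserving bijection that maps the sub-interval of $[0,1]$ of length $p_a$ carrying cell $a$ onto the union of the $N_a$ grid-cells $((i-1)/n,i/n)$ assigned to label $a$ (extended arbitrarily — e.g.\ by a measure-preserving map — on the leftover pieces, since the total lengths $\sum_a p_a = 1 = \sum_a N_a/n$ match). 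With this choice, $f_0(\tau(x),\tau(y))$ and $\widetilde f_{\bTheta_0}(x,y)$ agree except on the set of $(x,y)$ where at least one coordinate lands in a "mismatched" grid-cell, i.e.\ a cell $((i-1)/n,i/n)$ that $\tau$ does not map entirely inside $\phi^{-1}(\text{label of }i)$. The total Lebesgue measure of mismatched grid-cells in one coordinate is at most $\sum_{a=1}^k |N_a/n - p_a| + k/n$ (the $k/n$ accounting for the at most one boundary cell straddling each of the $k$ interval endpoints), and since $f_0 \le \rho_n$ pointwise, the integrand $|f_0(\tau(x),\tau(y)) - \widetilde f_{\bTheta_0}(x,y)|^2 \le \rho_n^2$ is supported on a set of measure at most $2(\sum_a |N_a/n - p_a| + k/n)$. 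Hence
\[
\delta^2(\widetilde f_{\bTheta_0}, f_0) \le C\rho_n^2\left(\sum_{a=1}^k\Big|\frac{N_a}{n} - p_a\Big| + \frac{k}{n}\right).
\]

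Next I would take expectations. Each $N_a \sim \mathrm{Bin}(n,p_a)$, so $\E|N_a/n - p_a| \le \sqrt{\Var(N_a)}/n = \sqrt{p_a(1-p_a)/n} \le \sqrt{p_a/n}$ by Jensen. Summing and applying Cauchy–Schwarz, $\sum_{a=1}^k \E|N_a/n - p_a| \le \frac{1}{\sqrt n}\sum_{a=1}^k \sqrt{p_a} \le \frac{1}{\sqrt n}\sqrt{k\sum_a p_a} = \sqrt{k/n}$. Combining with the deterministic $k/n \le \sqrt{k/n}$ (valid since $k\le n$) gives $\E[\delta^2(\widetilde f_{\bTheta_0},f_0)] \le C\rho_n^2\sqrt{k/n}$, as claimed.

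The main obstacle is making the construction of $\tau$ fully rigorous as a single measure-preserving bijection of $[0,1]$: one must verify that after matching each cell $a$ to $\lfloor n p_a\rfloor$ or so of its grid-cells, the leftover mass in the domain (total length $\sum_a(p_a - N_a/n)^+$) and in the range (total length $\sum_a(N_a/n - p_a)^+$, plus the straddling boundary cells) are equal and can be matched by an arbitrary measure-preserving bijection between them — this is where the factor $k/n$ and the absolute-value sum enter, and care is needed because $\tau$ must be a bijection of the whole interval, not just a partial matching. A clean way around this is to note that $\delta^2$ can equivalently be computed (up to the same order) over measure-preserving maps rather than bijections, or to simply absorb all the "bad" grid-cells and leftover intervals into one exceptional set of controlled measure on which we bound the integrand crudely by $\rho_n^2$; the combinatorial bookkeeping above already does exactly this, so the remaining work is routine.
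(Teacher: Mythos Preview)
Your proposal is correct and follows essentially the same route as the paper: reduce to the full matrix $\bTheta_0'$ (absorbing the diagonal into an $O(\rho_n^2/n)$ term), observe that the integrand $|f_0 - \widetilde f_{\bTheta_0'}|^2$ is bounded by $\rho_n^2$ and supported on a mismatch set of measure at most $\sum_{a=1}^k |N_a/n - p_a|$, then bound the expectation of this sum by $\sqrt{k/n}$ via Jensen and Cauchy--Schwarz.

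The one place where the paper is cleaner is exactly the ``main obstacle'' you flag. Rather than building a single measure-preserving bijection $\tau$ and tracking boundary grid-cells (your extra $k/n$ term), the paper exploits that $\delta(\cdot,\cdot)$ is a metric on the quotient space $\widetilde{\cW}$: it replaces $f_0$ by a weakly isomorphic \emph{ordered} step-function $f'$ (with intervals $[F_\phi(a-1),F_\phi(a))$), and replaces $\widetilde f_{\bTheta_0'}$ by a weakly isomorphic graphon $\widehat f'(x,y)=\rho_n\bQ_{\psi(x),\psi(y)}$ for an explicit map $\psi:[0,1]\to[k]$ with $\lambda(\psi^{-1}(a))=\widehat\lambda_a=N_a/n$. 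Since $\delta(f_0,\widetilde f_{\bTheta_0'})=\delta(f',\widehat f')\le \|f'-\widehat f'\|_2$, one only has to compare two step-functions with the \emph{same} interval endpoints up to a shift of $|\lambda_a-\widehat\lambda_a|$ in each block; this eliminates all grid-boundary bookkeeping and gives the mismatch bound $\sum_a|\lambda_a-\widehat\lambda_a|$ directly. Your workaround (``absorb all bad cells into one exceptional set'') is valid, but if you want to tighten the write-up, switching to this two-sided weak-isomorphism trick removes the hand-waving around $\tau$ entirely.
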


Combining this result with Lemma~\ref{lem:upper_risk_graphon} and Propositions \ref{prp:l2} and \ref{prp:l2_Thres} we obtain the following risk bounds for the least squares and restricted least squares graphon estimators. 

\begin{Corollary}\label{cor:integrated_risk_sbm} Consider the graphon model \eqref{sparse_grapnon_mod} with $W_0\in \cW[k]$.
There exist absolute constants $C_1$ and $C_2$ such that the following holds. 
\begin{itemize}
\item[(i)] If $\rho_n \ge \frac{k\log(k)}{n}$, $n_0= \lfloor n/2k \rfloor$, 
and the step function $W_0$ is balanced, then the least squares  graphon estimator $\widehat{f}$  constructed with this choice of $n_0$ satisfies 
\[
\E\left[\delta^2\left(\widehat{f} , f_0\right)\right]\leq C_1\left[  \rho_n \left(\frac{k^2}{n^2}+  \frac{\log(k)}{n}\right)+ \rho_n^2  \sqrt{\frac{k}{n}}\right]. 
\]
\item[(ii)]
If $\rho_n\leq r$, the restricted least squares graphon estimator $\widehat{f}_r$ satisfies 
\beq\label{eq:integred_risk_lbm_restricted}
 \E\left[\delta^2\left(\widehat{f}_{r} , f_0\right)\right]\leq C_2  \left[r\left(\frac{k^2}{n^2}+  \frac{\log(k)}{n}\right)+  \rho_n^2  \sqrt{\frac{k}{n}}\right]. 
\eeq
\end{itemize}
\end{Corollary}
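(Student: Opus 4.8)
\textbf{Proof proposal for Corollary~\ref{cor:integrated_risk_sbm}.}

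The plan is to combine the triangle-inequality decomposition of Lemma~\ref{lem:upper_risk_graphon} with the probability-matrix oracle inequalities of Section~\ref{sec:proba_matrix} and the agnostic-error bound of Proposition~\ref{prp:bias_sbm}. Concretely, for part~(i) I would apply Lemma~\ref{lem:upper_risk_graphon} with $\widehat{\bT}=\widehat{\bTheta}$, so that
\[
\E\left[\delta^2(\widehat{f},f_0)\right]\le 2\,\E\left[\tfrac{1}{n^2}\|\widehat{\bTheta}-\bTheta_0\|_F^2\right]+2\,\E\left[\delta^2(\widetilde f_{\bTheta_0},f_0)\right].
\]
The second term is at most $C\rho_n^2\sqrt{k/n}$ by Proposition~\ref{prp:bias_sbm}, since $W_0\in\cW[k]$. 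For the first term I would invoke Corollary~\ref{cor:1}: the hypotheses $n_0=\lfloor n/2k\rfloor\ge Cn/k$ (balanced partition), $n_0\ge 2$ (which holds once $n\ge 4k$; the range $n<4k$ can be absorbed into the constant since $\rho_n\sqrt{k/n}\gtrsim 1$ there, or handled by noting the statement is for $k\le n$ and adjusting constants), and $\|\bTheta_0\|_\infty\le\rho_n$ with the assumed lower bound $\rho_n\ge k\log(k)/n$ giving $\|\bTheta_0\|_\infty\ge$ — wait, here one needs $\|\bTheta_0\|_\infty\ge k\log(k)/n$; this is not automatic. The resolution is that $W_0\in\cW[k]$ is balanced and, crucially, when $\bTheta_0$ comes from a $k$-step graphon we actually have $\bTheta_0\in\cT_{n_0}[k']$ for the block structure induced by $\phi$ composed with the design — more carefully, one observes that for the choice $k=\lceil\text{(number of true steps)}\rceil$ the bias term $\|\bTheta_0-\bTheta_{*,n_0}\|_F^2$ should be controlled directly (see below), and the condition $\|\bTheta_0\|_\infty\ge k\log(k)/n$ is used only to drop the third term of Proposition~\ref{prp:l2}; alternatively one applies Proposition~\ref{prp:l2} directly and checks that under $\rho_n\ge k\log(k)/n$ the term $\frac{C_3\log(n/n_0)}{n_0}(\frac{\log k}{n}+\frac{k^2}{n^2})$ is dominated by $\rho_n(\frac{\log k}{n}+\frac{k^2}{n^2})$, since $\log(n/n_0)/n_0\asymp k\log(2k)/n\le C\rho_n$. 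This is the cleanest route and I would take it: use Proposition~\ref{prp:l2} rather than Corollary~\ref{cor:1}.

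It then remains to bound the bias $\frac{1}{n^2}\E\|\bTheta_0-\bTheta_{*,n_0}\|_F^2$ for $W_0\in\cW[k]$ balanced and $n_0=\lfloor n/2k\rfloor$. Here the key point is that $\bTheta_0$ is generated by a $k$-step graphon, so the latent labels $\phi(\xi_1),\dots,\phi(\xi_n)$ partition $[n]$ into $k$ groups; by a balls-in-bins / binomial concentration argument each group has size close to $n/k$, and in particular at least $n/(2k)\ge n_0$ with probability tending to one. On that event $\bTheta_0$ lies exactly in $\cT_{n_0}[k]$, so the bias is zero; on the complementary (exponentially small) event we bound $\frac{1}{n^2}\|\bTheta_0-\bTheta_{*,n_0}\|_F^2\le\rho_n^2$ trivially. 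Thus $\frac{1}{n^2}\E\|\bTheta_0-\bTheta_{*,n_0}\|_F^2\le\rho_n^2 e^{-cn/k}$, which is of smaller order than $\rho_n^2\sqrt{k/n}$ (indeed $e^{-cn/k}\le\sqrt{k/n}$ for all $k\le n$ up to constants). Plugging all three pieces into the decomposition gives the claimed bound in~(i). For part~(ii) the argument is identical but simpler: apply Lemma~\ref{lem:upper_risk_graphon} with $\widehat{\bT}=\widehat{\bTheta}^r$, use Proposition~\ref{prp:l2_Thres} (valid since $\|\bTheta_0\|_\infty\le\rho_n\le r$) to get $\E[\tfrac1{n^2}\|\widehat{\bTheta}^r-\bTheta_0\|_F^2]\le\frac{C_1}{n^2}\|\bTheta_0-\bTheta_*\|_F^2+C_2 r(\frac{\log k}{n}+\frac{k^2}{n^2})$, control the bias $\frac{1}{n^2}\|\bTheta_0-\bTheta_*\|_F^2$ by the same concentration argument (now with $n_0=1$, so in fact $\bTheta_0\in\cT[k]$ deterministically and the bias is exactly $0$), and add the agnostic term from Proposition~\ref{prp:bias_sbm}.

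The main obstacle is the bookkeeping around the condition $\|\bTheta_0\|_\infty\ge k\log(k)/n$ in Corollary~\ref{cor:1}: since that bound on $\|\bTheta_0\|_\infty$ need not hold, the honest path is to go back to Proposition~\ref{prp:l2} and verify that the extra third term is absorbed under the standing assumption $\rho_n\ge k\log(k)/n$, using $\log(n/n_0)/n_0\le Ck\log(2k)/n\le C\rho_n$. A secondary nuisance is the edge case $n_0\ge2$, i.e. ensuring $\lfloor n/2k\rfloor\ge2$; for $k>n/4$ the target rate $\rho_n^2\sqrt{k/n}$ (and also $\rho_n(k^2/n^2)$) exceeds a constant, so the inequality is vacuous and can be made to hold by enlarging $C_1$. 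Everything else is a direct substitution, so I expect the write-up to be short once these two technical points are dispatched.
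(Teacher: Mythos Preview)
Your proposal is correct and follows essentially the same route as the paper: decompose via Lemma~\ref{lem:upper_risk_graphon}, bound the agnostic error by Proposition~\ref{prp:bias_sbm}, and for part~(i) use binomial concentration (Bernstein) to show each of the $k$ latent groups has size at least $n/(2k)\ge n_0$ with probability $1-k\exp(-Cn/k)$, so that the bias vanishes on the good event and Proposition~\ref{prp:l2} (not Corollary~\ref{cor:1}) gives the estimation-error bound with the third term absorbed via $\log(n/n_0)/n_0\le C\rho_n$. Your observation that in part~(ii) the bias is deterministically zero because $\bTheta_0\in\cT[k]$ always is exactly right and makes that part immediate from Proposition~\ref{prp:l2_Thres}.
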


As an immediate consequence of this corollary, we get the following upper bound on the minimax risk:
\beq\label{eq:minimax_risk_graphon}
\inf_{\widehat{f}}\sup_{W_0\in \cW[k]}\E_{W_0}\left[\delta^2\left(\widehat{f}, f_0\right)\right]
\leq C_3  
 \left\{\left[\rho_n \left(\frac{k^2}{n^2}  +  \frac{\log(k)}{n}\right) + \rho_n^2  \sqrt{\frac{k}{n}} \right] \wedge \rho_n^2\right\}
\eeq
where $C_3$ is an absolute constant. Here, $\E_{W_0}$ denotes the expectation with respect to the distribution of observations $\bA'=(\bA_{ij}, 1\le j<i\le n)$ when the underlying sparse graphon is $\rho_n W_0$
 and $\inf_{\widehat{f}}$ is the infimum over all estimators. The bound \eqref{eq:minimax_risk_graphon} follows from \eqref{eq:integred_risk_lbm_restricted} with $r= \rho_n$ and the fact that the risk of the null estimator $\tilde f\equiv 0$ is smaller than $\rho_n^2$. 
 
The next proposition shows that the upper bound \eqref{eq:minimax_risk_graphon} is optimal in a minimax sense (up to a logarithmic factor in $k$ in one of the regimes).

\begin{proposition}\label{prp:lower_minimax_integrated_risk}
There exists a universal constant $C>0$ such that for any sequence $\rho_n>0$ and any positive integer $2\leq k\leq n$, 
\beq\label{eq:lower_minimax_integrated_risk}
 \inf_{\widehat{f}}\sup_{W_0\in \cW[k]}\E_{W_0}\left[\delta^2\left(\widehat{f}, f_0\right)\right]\geq C  
 \left\{\left[\rho_n \left(\frac{k^2}{n^2}  +  \frac{1}{n}\right) + \rho_n^2  \sqrt{\frac{k-1}{n}} \right] \wedge \rho_n^2\right\} ,
 \eeq
 where $\E_{W_0}$ denotes the expectation with respect to the distribution of observations $\bA'=(\bA_{ij}, 1\le j<i\le n)$ when the underlying sparse graphon is $\rho_n W_0$
 and $\inf_{\widehat{f}}$ is the infimum over all estimators. 
\end{proposition}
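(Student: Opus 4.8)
The plan is to prove the three additive pieces of the bound separately. Since a sum of nonnegative terms is comparable (up to a factor $3$) to their maximum, and each piece below respects the common cap $\rho_n^2$, it suffices to bound the minimax risk from below by a constant multiple of each of $\rho_n\frac{k^2}{n^2}\wedge\rho_n^2$, $\frac{\rho_n}{n}\wedge\rho_n^2$, and $\rho_n^2\sqrt{(k-1)/n}$ (the last is automatically $\le\rho_n^2$ because $k\le n$). Each piece follows the standard reduction to testing: exhibit a finite family $\{W^{(1)},\dots,W^{(M)}\}\subset\cW[k]$ (with $M=2$ allowed) such that the scaled graphons are pairwise $\delta$-separated, $\delta(\rho_nW^{(i)},\rho_nW^{(j)})\ge 2s$, and the laws $\P_{W^{(i)}}$ of $\bA'$ in model~\eqref{sparse_grapnon_mod} satisfy $\mathrm{KL}(\P_{W^{(i)}}\|\P_{W^{(j)}})\le\frac18\log M$ for $i\ne j$ (or simply $O(1)$ when $M=2$); the generalized Fano / Le Cam inequality then gives $\inf_{\widehat f}\sup_{W_0\in\cW[k]}\E_{W_0}[\delta^2(\widehat f,f_0)]\ge c\,s^2$. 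All families will consist of $k$-step graphons $W(x,y)=\bQ_{\phi(x),\phi(y)}$, and $\P_W$ depends on the pair $(\mathbf q,\bQ)$, where $\mathbf q_a=\lambda(\phi^{-1}(a))$, only through an i.i.d.\ $\mathbf q$-sample of community labels $z(i)=\phi(\xi_i)$ followed by independent $\mathrm{Bernoulli}(\rho_n\bQ_{z(i)z(j)})$ edges, so that by convexity of KL every divergence bound reduces to elementary Bernoulli/multinomial estimates.

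For $\rho_n\frac{k^2}{n^2}\wedge\rho_n^2$, keep the cells balanced, $\mathbf q=(1/k,\dots,1/k)$, and vary $\bQ$: anchor the diagonal to distinct, well-separated values and let the off-diagonal entries run over $\{\frac14,\frac14+\gamma\}$ along a Varshamov--Gilbert packing of size $M\ge 2^{ck^2}$, chosen so the configurations stay $\ell_1$-separated even after simultaneous row/column permutations, with $\gamma\asymp\sqrt{k^2/(n^2\rho_n)}\wedge\frac12$. Each off-diagonal block is observed through $\asymp(n/k)^2$ $\mathrm{Bernoulli}(\asymp\rho_n)$ variables, so $\mathrm{KL}(\P_{W^{(i)}}\|\P_{W^{(j)}})\lesssim n^2\rho_n\gamma^2\lesssim k^2\asymp\log M$, while the anchored diagonal forces every near-optimal measure-preserving bijection to match community $a$ with community $a$, giving $\delta^2\gtrsim\rho_n^2\gamma^2\asymp\rho_n\frac{k^2}{n^2}\wedge\rho_n^2$. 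If $k\ge\sqrt n$ this already dominates $\frac{\rho_n}{n}\wedge\rho_n^2$; for $k<\sqrt n$ the latter needs a separate, more delicate two-point construction. Take $W,W'$ to be symmetric $k$-community graphons (balanced cells) with within-probability $\frac14$ and between-probabilities $\frac14-\beta$, $\frac14-\beta'$, with $\beta,\beta'\asymp(n\rho_n)^{-1/2}\wedge\frac18$ placed just below the Kesten--Stigum threshold and $|\beta-\beta'|$ of the same order. Conditionally on the labels the KL would be of order $n$, but the labels are unobserved, and marginalizing over them collapses $\mathrm{KL}(\P_W\|\P_{W'})$ to $O(1)$ -- this is the classical below-threshold indistinguishability of sparse block models, via the second-moment / small-subgraph-conditioning method. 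Since both graphons are symmetric in the community labels, a direct computation gives $\delta^2(\rho_nW,\rho_nW')=(1-\tfrac1k)\rho_n^2(\beta-\beta')^2\gtrsim\rho_n^2\beta^2\asymp\frac{\rho_n}{n}\wedge\rho_n^2$.

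For the agnostic term $\rho_n^2\sqrt{(k-1)/n}$, fix instead a single generic value matrix $\bQ$ with entries in $[\frac14,\frac34]$ -- generic enough that the identity is the only permutation nearly preserving it -- and vary the cell sizes: set $\mathbf q^{\omega}=(1/k,\dots,1/k)+\frac{c}{k^{3/2}}\omega$ for $\omega$ ranging over a Varshamov--Gilbert packing of $\{\omega\in\{-1,1\}^k:\sum_a\omega_a=0\}$ of size $M\ge 2^{ck}$ with pairwise Hamming distance $\ge ck$, where $c\asymp k/\sqrt n$ (so $\mathbf q^\omega$ stays in the simplex as $k\le n$). Convexity of KL gives $\mathrm{KL}(\P_{W^\omega}\|\P_{W^{\omega'}})\le n\,\mathrm{KL}(\mathbf q^\omega\|\mathbf q^{\omega'})\lesssim nk\sum_a(q^\omega_a-q^{\omega'}_a)^2\lesssim k\asymp\log M$, while $\sum_a|q^\omega_a-q^{\omega'}_a|\asymp\frac{c}{\sqrt k}\asymp\sqrt{k/n}$; combining this with the transportation/rearrangement lower bound $\delta^2(\rho_nW^\omega,\rho_nW^{\omega'})\gtrsim\rho_n^2\sum_a|q^\omega_a-q^{\omega'}_a|$ (valid because, for a generic $\bQ$, no measure-preserving bijection can reconcile two step graphons with different cell-size profiles at smaller cost) yields $\delta^2\gtrsim\rho_n^2\sqrt{k/n}\asymp\rho_n^2\sqrt{(k-1)/n}$, and Fano concludes. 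For bounded $k$ the same follows from a two-point version: perturb the cell sizes of a two-step graphon by $\asymp n^{-1/2}$ in one direction and use a binomial-KL bound.

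The routine inputs are Varshamov--Gilbert, the Fano/Le Cam inequalities, and the Bernoulli/multinomial KL estimates. The two genuinely delicate ingredients are: (1) the $\delta$-separation lower bounds -- since $\delta$ infimizes over all measure-preserving bijections, one must rule out any cheap realignment of the competing graphons; for the value-matrix families this is dispatched by anchoring the diagonal, but for the cell-size family it demands an honest transportation-cost lower bound for step graphons with mismatched cell-size profiles, which is the main new estimate; and (2) the marginalized $\chi^2$ (equivalently KL) bound underlying the detection-threshold two-point family, i.e.\ the statistical indistinguishability of sparse block models just below their Kesten--Stigum threshold, which is exactly what permits an information cost of $O(1)$ alongside a parametric separation as large as $\rho_n^2/(n\rho_n)=\rho_n/n$. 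I expect point (1), and specifically the transportation lower bound that drives the novel $\rho_n^2\sqrt{(k-1)/n}$ rate, to be the crux of the proof.
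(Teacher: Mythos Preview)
Your three-part decomposition and Fano/Le Cam strategy match the paper exactly, and you correctly flag the $\delta$-separation over measure-preserving bijections as the crux. The executions differ in each piece.

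For $\rho_n k^2/n^2$, the paper keeps the balanced partition, takes $\bQ_\bB=(\bJ+\epsilon\bB)/2$ over a packing of symmetric $\{\pm 1\}^{k\times k}$ matrices that stay Frobenius-separated under \emph{every pair} of row/column permutations (Lemma~\ref{lem:varshamov_varation2}), and then handles an arbitrary bijection $\tau$ by Birkhoff--von~Neumann: the induced doubly-stochastic mass matrix $k\omega$ is a convex combination of permutation matrices, so the squared $L_2$ cost is a convex combination of $\|\bB_1-\bB_2^{\pi_1,\pi_2}\|_F^2$, each $\gtrsim k^2$ (Lemma~\ref{lem:lower_bound_distance_graphon_fano}). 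Your anchored-diagonal mechanism does not do this job: with $k$ equally spaced diagonal values in $[0,1]$ the cost of an adjacent transposition is only $O(k^{-4})$, dominated by $\gamma^2\asymp k^2/(n^2\rho_n)$ whenever $k^6\gg n^2\rho_n$, so the diagonal does not force $\pi=\mathrm{id}$ in general. Since you already request permutation-invariant separation, the diagonal is redundant; what your argument is actually missing is the Birkhoff reduction from bijections to permutations.

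For $\rho_n^2\sqrt{(k-1)/n}$, your plan is exactly the paper's (fix $\bQ$, perturb cell sizes by $\pm\epsilon\asymp(nk)^{-1/2}$, bound the KL by the multinomial divergence). ``Generic $\bQ$'' is made precise by a probabilistic construction (Lemma~\ref{lem:existence_hadamard_matrix}): a random symmetric $\pm 1$ matrix has (i) all pairs of columns nearly orthogonal, and (ii) any two disjoint $k/16$-subsets of columns remain $\Omega(k^2)$-separated under every relabeling. Property~(i) forces any bijection to be essentially block-permuting --- this is the row-profile argument that plays the role you wanted the anchored diagonal to play, but with constant-order rather than $O(1/k)$ separation between profiles --- and property~(ii) then delivers the transportation lower bound you flagged as the crux; see Lemma~\ref{lem:separated}.

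For $\rho_n/n$, the paper uses just two hypotheses: the constant $W_1\equiv 1/2$ versus a balanced two-block $W_2$ with values $1/2\pm\epsilon$, and bounds $\chi^2(\P_{W_2},\P_{W_1})$ by a direct second-moment calculation (Lemma~\ref{lem:upper_total_variation_distance}). This is precisely the computation you call ``Kesten--Stigum'', but against Erd\H{o}s--R\'enyi and for $k=2$ it is short and self-contained. Your variant with two $k$-block models and two nonzero $\beta,\beta'$ would require contiguity between two genuine SBMs rather than SBM versus ER, which is strictly harder and unnecessary here.
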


The proof is given in Section \ref{sec:proof_minimax_integrated_risk}. Showing the  rate $\rho_n^2  \sqrt{(k-1)/n} $ relies on two arguments. First, we construct a family of graphons that are well separated with respect to the $\delta(\cdot,\cdot)$ distance. The difficulty comes from the fact that this is not a classical $L_2$ distance but a minimum of the $L_2$ distance over all measure-preserving transformations. Second, we show that the behavior of the Kullback-Leibler divergences between the graphons in this family is driven by the randomness of the latent variables $\xi_1,\dots,\xi_n$ while the nature of the connection probabilities can be neglected. This argument is similar to the ``data processing'' inequality. 

\smallskip

Proposition \ref{prp:lower_minimax_integrated_risk} does not cover the case $k=1$ (Erd\H{o}s-R\'enyi models). In fact, in this case the constant graphon estimator $\widehat{f}= \widetilde{f}_{\overline{A}}$ achieves the optimal rate, which is equal to $\rho_n/n^2$.  
The suboptimality of the bound of Proposition \ref{prp:bias_sbm} for $k=1$ is due to the fact that the diagonal blocks of $\widehat{f}$ and $\widehat{f}^r$ are all equal to zero. By carefully defining the diagonal blocks of these estimators, we could have achieved the optimal bound $\rho_n^2\sqrt{(k-1)/n}$ but this would need a modification of the notation and of the proofs.

\smallskip

The bounds \eqref{eq:minimax_risk_graphon} and \eqref{eq:lower_minimax_integrated_risk} imply that there are three regimes depending on the sparsity parameter~$\rho_n$:
\begin{itemize}
 \item[(i)] {\it Weakly sparse graphs:} $\rho_n\geq \tfrac{\log(k)}{\sqrt{kn}}\vee (\tfrac{k}{n})^{3/2}$. The minimax risk is of the order $\rho_n^2\sqrt{k/n}$, and thus it is driven by the agnostic error arising from the lack of knowledge of the design.
 
 \item[(ii)] {\it Moderately sparse graphs:} $ \tfrac{\log(k)}{n}\vee \left (\tfrac{k}{n}\right )^{2} \leq \rho_n \leq \tfrac{\log(k)}{\sqrt{kn}}\vee \left (\tfrac{k}{n}\right )^{3/2}$. The risk bound \eqref{eq:minimax_risk_graphon} is driven by the probability matrix estimation error. The upper bound \eqref{eq:minimax_risk_graphon} is of the order $\rho_n \left(\frac{k^2}{n^2}+  \frac{\log(k)}{n}\right)$, which is the optimal rate of probability matrix estimation, cf. Proposition \ref{lower_sparse_sbm}. Due to \eqref{eq:lower_minimax_integrated_risk}, it is optimal up to $\log(k)$ factor with respect to the $\delta(\cdot,\cdot)$ distance. 
 
\item[(iii)] {\it Highly sparse graphs:} $\rho_n\leq  \tfrac{\log(k)}{n}\vee \left (\tfrac{k}{n}\right )^{2}$. The minimax risk is of the order $\rho_n^2$, and it is attained by the null estimator. 
\end{itemize}

\smallskip 

In a work parallel to ours, Borgs et al. \cite{borgs2015} provide an upper bound for the risk of step function graphon estimators in the context of privacy. If the partitions are balanced, Borgs et al. \cite{borgs2015} obtain the bound on the agnostic error as in Proposition \ref{prp:bias_sbm}. When there is no privacy issues, comparing the upper bound of \cite{borgs2015} with that of Corollary \ref{cor:integrated_risk_sbm}, we see that it has a suboptimal rate, which is the square root of the rate of Corollary \ref{cor:integrated_risk_sbm} in the moderately sparse zone. Note also that the setting in \cite{borgs2015} is restricted to balanced partitions while we consider more general partitions.

\subsection{Smooth graphons}

We now derive bounds on the mean squared error of smooth graphon estimation. The analysis will be based on the results of Section \ref{sec:proba_matrix} and on the following bound for the agnostic error associated to smooth graphons.

\begin{proposition}\label{prp:tographon} Consider the graphon model \eqref{sparse_grapnon_mod} with $W_0\in \Sigma(\alpha, L)$ where $\alpha, L>0$~and~$\rho_n>0$.  Then
  \beq\label{eq:risk_L2_SQE}
 \E \left[ \delta^2(\widetilde{f}_{\bTheta_0}, f_0)\right] \leq C\frac{\rho_n^{2}}{n^{\alpha\wedge 1}}\, 
  \eeq
  where the constant $C$ depends only on $L$ and $\alpha$. 
\end{proposition}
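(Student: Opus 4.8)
The plan is to bound the agnostic error $\delta^2(\widetilde f_{\bTheta_0}, f_0)$ by exhibiting a specific measure-preserving transformation $\tau$ rather than optimizing over all of $\mathcal M$, and then controlling the resulting $L_2$ distance using the H\"older smoothness property \eqref{lip}. First I would recall that $\bTheta_0$ has entries $(\bTheta_0)_{ij} = \rho_n W_0(\xi_i,\xi_j)$ for $i \ne j$ and $0$ on the diagonal, so that $\widetilde f_{\bTheta_0}(x,y) = \rho_n W_0(\xi_{\lceil nx\rceil}, \xi_{\lceil ny\rceil})$ off the diagonal blocks and $0$ on the $n$ diagonal squares of side $1/n$. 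The natural coupling is to sort the latent variables: let $\sigma$ be the (random) permutation of $[n]$ with $\xi_{\sigma(1)} \le \xi_{\sigma(2)} \le \cdots \le \xi_{\sigma(n)}$, and let $\tau$ be a measure-preserving bijection of $[0,1]$ that maps the $i$-th interval $((i-1)/n, i/n]$ onto the $\sigma(i)$-th interval (a piecewise translation), so that $f_0(\tau(x),\tau(y)) = \rho_n W_0(\xi_{\sigma(\lceil nx\rceil)}, \xi_{\sigma(\lceil ny\rceil)})$.

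Second, with this choice, on each block $((i-1)/n,i/n]\times((j-1)/n,j/n]$ with $i\ne j$ the integrand is $\rho_n^2 |W_0(\xi_{\sigma(i)},\xi_{\sigma(j)}) - \widetilde f_{\bTheta_0}(x,y)|^2$; but $\widetilde f_{\bTheta_0}$ is itself block-constant on the same grid, so I would instead relabel and note that $\widetilde f_{\bTheta_0}$ composed with the inverse sorting permutation is the block-constant function with value $\rho_n W_0(\xi_{\sigma(i)},\xi_{\sigma(j)})$ on block $(i,j)$ — i.e.\ the cleaner statement is that $\delta^2(\widetilde f_{\bTheta_0},f_0) \le \int\int |f_0^{\,\mathrm{sorted}} - \widetilde f_{\bTheta_0}^{\,\mathrm{sorted}}|^2$ reduces to comparing, on block $(i,j)$, the constant $\rho_n W_0(\xi_{\sigma(i)},\xi_{\sigma(j)})$ with the true graphon $\rho_n W_0(\tau(x),\tau(y))$ where now $\tau$ restricted to $((i-1)/n,i/n]$ is a translation onto $((\sigma(i)\text{-th order statistic interval}))$... rather than belabor this, the key point is: on off-diagonal blocks, smoothness \eqref{lip} gives $|W_0(\tau(x),\tau(y)) - W_0(\xi_{\sigma(i)},\xi_{\sigma(j)})| \le M(|\tau(x)-\xi_{\sigma(i)}|^{\alpha\wedge 1} + |\tau(y)-\xi_{\sigma(i)}|^{\alpha\wedge1})$ and on the $i$-th block the argument $\tau(x)$ ranges over an interval of length $\xi_{\sigma(i+1)}-\xi_{\sigma(i)}$ (the gap between consecutive order statistics), while the diagonal blocks contribute at most $\rho_n^2 \cdot (1/n)$ in total since $|W_0|\le 1$ there. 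Summing, $\delta^2 \lesssim \rho_n^2/n + M^2\rho_n^2 \sum_i (\xi_{(i+1)}-\xi_{(i)})^{\alpha\wedge 1}\cdot(1/n)$ times $2$ (for the two coordinates); taking expectations, each spacing $\xi_{(i+1)}-\xi_{(i)}$ of $n$ i.i.d.\ uniforms has $\E[(\xi_{(i+1)}-\xi_{(i)})^{\beta}] \asymp n^{-\beta}$ for $\beta = \alpha\wedge 1 \le 1$ (a Beta$(1,n)$ moment computation), so the sum over $i$ of $\E[(\text{spacing})^{\alpha\wedge1}]$ is $\asymp n \cdot n^{-(\alpha\wedge1)}$, and dividing by the block count normalization yields $\E[\delta^2] \lesssim \rho_n^2/n + M^2 \rho_n^2 n^{-(\alpha\wedge1)} \le C\rho_n^2 n^{-(\alpha\wedge1)}$ since $\alpha\wedge 1 \le 1$.

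The main obstacle I anticipate is bookkeeping the $\delta$-distance correctly: because $\delta$ minimizes over measure-preserving bijections and $f_0$ is the continuous object while $\widetilde f_{\bTheta_0}$ is the discrete one, one must be careful that the chosen $\tau$ genuinely aligns the $i$-th grid block with the spatial location of $\xi_{\sigma(i)}$, and that the "integrate the smoothness modulus over a block of width equal to a uniform spacing" step is applied to the right variable; handling the zero diagonal blocks (which are an artifact, contributing the $\rho_n^2/n$ term that is absorbed since $1/n \le n^{-(\alpha\wedge1)}$ is false in general — wait, $\alpha\wedge1\le1$ means $n^{-(\alpha\wedge1)} \ge 1/n$, so it \emph{is} absorbed) requires a short separate estimate. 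A secondary technical point is the moment bound $\E[(\xi_{(i+1)} - \xi_{(i)})^{\beta}] = B(1+\beta, n)/B(1,n) = \Gamma(1+\beta)\Gamma(n+1)/\Gamma(n+1+\beta) \asymp \Gamma(1+\beta) n^{-\beta}$, which is standard but should be stated; the uniformity of this estimate over $i$ and the fact that $\Gamma(1+\beta)$ is bounded for $\beta\in(0,1]$ is what keeps the constant depending only on $\alpha$ and $L$ (through $M$).
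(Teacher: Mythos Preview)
Your high-level architecture matches the paper's proof exactly: strip off the diagonal at cost $\rho_n^2/n$, pass to a sorted version of the empirical graphon via a block-permuting bijection, apply the H\"older bound \eqref{lip} on each block, and finish with a moment estimate for uniform order statistics. Where your argument goes wrong is the identification of the quantity to be controlled.

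You write that after choosing $\tau$ to permute the grid blocks by the sorting permutation $\sigma$, ``on the $i$-th block the argument $\tau(x)$ ranges over an interval of length $\xi_{(i+1)}-\xi_{(i)}$''. This cannot be: any measure-preserving $\tau$ sends an interval of length $1/n$ to a set of measure $1/n$, never to an interval of random length equal to a spacing. With your $\tau$, for $x$ in the $i$-th grid block one has $\tau(x)$ in the $\sigma(i)$-th grid block, so the quantity that actually appears after applying \eqref{lip} is $|\tau(x)-\xi_i|$, and after the re-indexing $m=\sigma(i)$ this is bounded by $1/n + |m/(n+1)-\xi_{(m)}|$. The object you must control is therefore the \emph{deviation} $|m/(n+1)-\xi_{(m)}|$ of the $m$-th order statistic from its mean, not the spacing $\xi_{(m+1)}-\xi_{(m)}$. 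These behave very differently: the spacing is $\sim 1/n$, while the deviation is $\sim 1/\sqrt{n}$. The paper handles the deviation via Jensen,
\[
\E\big[|m/(n+1)-\xi_{(m)}|^{2(\alpha\wedge1)}\big]\le \big(\Var(\xi_{(m)})\big)^{\alpha\wedge1}\le Cn^{-(\alpha\wedge1)},
\]
uniformly in $m$, which yields the stated rate $\rho_n^2 n^{-(\alpha\wedge1)}$. Your spacing computation $\E[(\xi_{(i+1)}-\xi_{(i)})^{\alpha\wedge1}]\asymp n^{-(\alpha\wedge1)}$ is a correct Beta moment but is applied to the wrong quantity; it only reproduces the right answer because you simultaneously dropped the square on the H\"older bound (the integrand is $|\cdot|^2$, so the exponent should be $2(\alpha\wedge1)$, not $\alpha\wedge1$). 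Fix both issues and your proof becomes the paper's.
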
 

Combining Proposition \ref{prp:tographon} with Lemma~\ref{lem:upper_risk_graphon} and with Propositions \ref{prp:l2} and \ref{prp:l2_Thres} we obtain the following risk bounds for the least squares and restricted least squares graphon estimators.

\begin{Corollary}\label{cor:2}
	Consider the graphon model \eqref{sparse_grapnon_mod} with   $W_0\in \Sigma(\alpha, L)$ where $\alpha, L>0$ and $4/n<\rho_{n}\le 1$. Fix $k=\Big\lceil \left (\rho_n^{1/2}n\right )^{\frac{1}{1+\alpha\wedge 1}} \Big\rceil$  if $\rho_n\geq n^{(\alpha\wedge 1)-1}$  and $k=\Big\lceil \left (\rho_n n\right )^{\frac{1}{2(\alpha\wedge 1)}} \Big\rceil$ if $\rho_n< n^{(\alpha\wedge 1)-1}$ 
	\begin{itemize}
		\item[(i)] Assume that $\rho_n\geq n^{-\frac{2(\alpha\wedge 1)}{1+2(\alpha\wedge 1)}}\left (\log (n\rho_n) \right )^{\frac{2(1+\alpha\wedge 1)}{1+2(\alpha\wedge 1) }}$ and there exists $n_0\ge 2$ such that $k=\lfloor n/n_0\rfloor$. Then,
		there exists a positive constant $C_1$ depending only on $L$ and $\alpha$  such that the least squares graphon estimator $\widehat{f}$ constructed with this choice of $n_0$ satisfies
		\beq\label{eq:cor:2}
		\E\left[\delta^2\left(\widehat{f},f_0\right)\right]  \leq C_1\left \{ \rho_{n}^{\frac{2+\alpha\wedge 1}{1+\alpha\wedge 1}}n^{-\frac{2(\alpha\wedge 1)}{1+\alpha\wedge 1}} + \dfrac{\rho_n\log (\rho_nn)}{n}+  \frac{\rho_n^2}{n^{\alpha\wedge 1}}\right\}  .
		\eeq
		\item[(ii)] Assume that $r\geq \rho_{n}\geq Cn^{-1}$. Then,
		there exists a positive constant $C_2$ depending only on $L$ and $\alpha$ such that the restricted least squares graphon estimator $\widehat{f}_{r}$ satisfies 
		\beq\label{eq:cor:3}
		\E\left[\delta^2\left(\widehat{f}_r,f_0\right)\right] \leq C_2\left \{ r^{\frac{2+\alpha\wedge 1}{1+\alpha\wedge 1}}n^{-\frac{2(\alpha\wedge 1)}{1+\alpha\wedge 1}} + \frac{r\log (\rho_nn)}{n} + \frac{\rho_n^2 }{n^{\alpha\wedge 1}}\right\}  .
		\eeq
	\end{itemize}
\end{Corollary}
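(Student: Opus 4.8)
\textbf{Proof proposal for Corollary \ref{cor:2}.}

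The plan is to combine the triangle-inequality decomposition of Lemma~\ref{lem:upper_risk_graphon} with the probability matrix estimation bounds of Section~\ref{sec:proba_matrix} and the agnostic error bound of Proposition~\ref{prp:tographon}. Concretely, for part (i), I would apply Lemma~\ref{lem:upper_risk_graphon} with $\widehat{\bT}=\widehat{\bTheta}$ (noting that $\widehat{\bTheta}$ has entries in $[0,1]$, so the lemma applies) to get
\[
\E\left[\delta^2(\widehat f, f_0)\right] \le 2\,\E\left[\frac{1}{n^2}\|\widehat{\bTheta}-\bTheta_0\|_F^2\right] + 2\,\E\left[\delta^2(\widetilde f_{\bTheta_0},f_0)\right].
\]
The second term is bounded by $C\rho_n^2/n^{\alpha\wedge 1}$ via Proposition~\ref{prp:tographon}, which already supplies the third summand in \eqref{eq:cor:2}.

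For the first term I would invoke Corollary~\ref{prp:empirical_smooth_raphon}(i), which under exactly the stated hypotheses ($\rho_n \ge n^{-\frac{2(\alpha\wedge 1)}{1+2(\alpha\wedge 1)}}(\log n)^{2(1+\alpha\wedge 1)}$, the choice $k=\lceil(\rho_n^{1/2}n)^{1/(1+\alpha\wedge1)}\rceil$, and the existence of $n_0\ge 2$ with $k=\lfloor n/n_0\rfloor$) gives
\[
\E\left[\frac{1}{n^2}\|\widehat{\bTheta}-\bTheta_0\|_F^2\right] \le C\left\{\rho_n^{\frac{2+\alpha\wedge1}{1+\alpha\wedge1}} n^{-\frac{2(\alpha\wedge1)}{1+\alpha\wedge1}} + \frac{\rho_n\log n}{n}\right\}.
\]
Adding the two pieces and absorbing the factor $2$ into the constant yields \eqref{eq:cor:2}. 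Part (ii) is entirely parallel: apply Lemma~\ref{lem:upper_risk_graphon} with $\widehat{\bT}=\widehat{\bTheta}^r$ (again entries in $[0,1]$), bound the estimation error by Corollary~\ref{prp:empirical_smooth_raphon}(ii) under the hypothesis $r\ge\rho_n\ge Cn^{-2}$, and bound the agnostic error by Proposition~\ref{prp:tographon}; summing gives \eqref{eq:cor:3}.

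Since all the heavy lifting has already been done in Section~\ref{sec:proba_matrix} (the oracle inequalities and their specialization to smooth graphons in Corollary~\ref{prp:empirical_smooth_raphon}) and in Proposition~\ref{prp:tographon}, there is essentially no obstacle here — the only thing to check is that the hypotheses of Corollary~\ref{cor:2}(i) and (ii) are literally the hypotheses under which Corollary~\ref{prp:empirical_smooth_raphon}(i) and (ii) were proved, so that the corresponding bounds may be quoted verbatim, and that the extra $2\E[\delta^2(\widetilde f_{\bTheta_0},f_0)]$ term is controlled by Proposition~\ref{prp:tographon} with the same constant dependence (only on $L$ and $\alpha$). The mild point worth a sentence is that the agnostic term $\rho_n^2/n^{\alpha\wedge1}$ is a genuinely new third component not present in the empirical-loss bound, so it must be carried explicitly rather than absorbed into the nonparametric rate; one may remark that for $\rho_n$ bounded away from $0$ and $\alpha\le 1$ it is of the same order as — or dominates — the clustering term $\rho_n\log n/n$ only when $\rho_n \gtrsim n^{\alpha-1}\log n$, but this comparison is a discussion point, not needed for the proof itself.
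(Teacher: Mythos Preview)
Your proposal is correct and matches the paper's approach: the paper states that Corollary~\ref{cor:2} follows by combining Proposition~\ref{prp:tographon} with Lemma~\ref{lem:upper_risk_graphon} and Propositions~\ref{prp:l2} and~\ref{prp:l2_Thres}, which is exactly your decomposition (you simply cite the packaged form, Corollary~\ref{prp:empirical_smooth_raphon}, for the estimation-error term rather than the underlying Propositions~\ref{prp:l2} and~\ref{prp:l2_Thres} directly).
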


For the purpose of the discussion, assume that $r\asymp \rho_n$. 
If $\rho_n\leq  n^{\alpha-1}\log(\rho_n n)$, the  rate of convergence is of the order $\rho_n\log( \rho_n n)/n$, the same as that of the probability matrix estimation risk $\E\big[\|\widehat{\bTheta}^r-\bTheta_0\|_F^2/n^2\big]$, cf. Corollary \ref{prp:empirical_smooth_raphon}. Observe that the above condition is always satisfied when $\alpha\geq  1$. If $\rho_n\geq n^{\alpha-1}\log(\rho_n n)$, the rate of convergence in \eqref{eq:cor:3}  for $\alpha\le 1$ is of the order $\rho_n^2/n^{\alpha}$ due to the agnostic error. This is slower  than the optimal nonparametric rate
$\rho_{n}^{\frac{2+\alpha}{1+\alpha}}n^{-\frac{2\alpha}{1+\alpha}}$  for probability matrix estimation. We conjecture that this loss is unavoidable when considering 
graphon estimation with the $\delta^2(\cdot,\cdot)$ error measure. We also note that the rates in \eqref{eq:cor:3} are  faster than those obtained by Wolfe and Olhede \cite{WolfeOlhede} for the maximum likelihood estimator. In some cases,  the improvement in the rate is  up to $n^{-\alpha/2}$.

\section{Proofs}

In this section, we will repeatedly use Bernstein's inequality that we state here for reader's convenience: Let $X_1,\dots,X_N$ be independent zero-mean random variables. Suppose that $|X_i|\leq M$ almost surely, for all $i$. Then, for any $t>0$,
\begin{equation}\label{bernstein}
\bbP\left \{\sum_{i=1}^{N} X_i\geq \sqrt{2t\sum_{i=1}^{N}\bbE(X^{2}_i)}+\dfrac{2M}{3}t\right \}\leq e^{-t}.
\end{equation}

{In the proofs of the upper bounds, to shorten the notation, we assume that instead of the $n(n-1)/2$ observations $\bA'$
we have the symmetrized observation matrix $\bA$, and thus
\[L(\bQ,z)=  \frac{1}{2} \sum_{(a,b)\in [k]\times [k]} \ \sum_{(i,j)\in z^{-1}(a)\times z^{-1}(b),\   j\ne i   }(\bA_{ij}-\bQ_{ab})^2.
\]
This does not change the estimators nor the results.
The changes are only in the values of constants that do not explicitly appear in the risk bounds.
}

\subsection{Proof of Proposition \ref{prp:l2}}

Since $\widehat{\bTheta}$ is a least squares estimator, we have 
\beq\label{eq:least_squares}
\|\widehat{\bTheta}- \bTheta_0\|_F^2\leq \|\bTheta_0-\bTheta_*\|_F^2+ 2\langle \widehat{\bTheta}-\bTheta_0, \bE\rangle +2\langle \bTheta_0-\bTheta_*, \bE\rangle\ 
\eeq
where $\bE= \bA-\bTheta_0$ is the noise matrix. 
The last summand on the right hand side of \eqref{eq:least_squares} has mean zero. So, to prove the proposition it suffices to bound the expectation of $\langle \widehat{\bTheta}-\bTheta_0, \bE\rangle$. 
For any $z\in \cZ_{n,k,n_0}$, denote by $\widetilde{\bTheta}_z$ the best Frobenius norm approximation of $\bTheta_{0}$ in the collection of matrices
\[
\cT_z:=\{\bTheta: \ \exists \bQ\in \mathbb{R}^{k\times k}_{\rm sym} \text{ such  that } \ \bTheta_{ij}=\bQ_{z(i)z(j)},  \ i\ne j,   \ \text{ and }  \bTheta_{ii}=0 \ \forall  i\}.
\]
We have the following decomposition:
\beqn
\langle \widehat{\bTheta}-\bTheta_0, \bE\rangle= \langle \widetilde{\bTheta}_{\hat{z}}-\bTheta_0, \bE\rangle + \langle \widehat{\bTheta}-\widetilde{\bTheta}_{\hat{z}}, \bE\rangle= (I) + (II)\ .
\eeqn 
In this decomposition, (I)
is the error due to misclustering and (II) is the error due to the Bernoulli noise. We bound each of these errors separately.

\noindent 
{\em Control of (I)}.  We apply Bernstein's inequality \eqref{bernstein} together with the union bound over all $z\in \cZ_{n,k,n_0}$ and we use that  $\langle \widetilde{\bTheta}_{{z}}-\bTheta_0, \bE\rangle=2\sum_{1\leq j < i\leq n} \left (\widetilde{\bTheta}_{{z}}-\bTheta_0\right )_{ij}\bE_{ij}$.  Since the variance of $\bE_{ij}$ satisfies $\Var (\bE_{ij})\le \|\bTheta_0\|_{\infty}$  while $\bE_{ij}\in [-1,1]$, and the cardinality of  $ \cZ_{n,k,n_0}$ satisfies $|\cZ_{n,k,n_0}|\leq k^n$, we obtain  
\begin{eqnarray*}
\P\left[\langle \widetilde{\bTheta}_{\hat{z}}-\bTheta_0, \bE\rangle\geq 2\|\widetilde{\bTheta}_{\hat{z}}-\bTheta_0\|_F\sqrt{\|\bTheta_0\|_{\infty}(n\log(k)+ t)}+ \frac{4}{3} \|\widetilde{\bTheta}_{\hat{z}}-\bTheta_0\|_{\infty}(n\log(k) + t) \right]\leq e^{-t} \  ,
\end{eqnarray*}
for all $t>0$.  Since the entries of $\widetilde{\bTheta}_{\hat{z}}$ are equal to averaged entries of $\bTheta_0$ over blocks, we obtain that $\|\widetilde{\bTheta}_{\hat{z}} - \bTheta_0\|_\infty\leq \|\bTheta_0\|_{\infty}$. Using this observation, decoupling the term $ {2}\|\widetilde{\bTheta}_{\hat{z}}-\bTheta_0\|_F\sqrt{\|\bTheta_0\|_{\infty}(n\log(k)+ t)}$
 via the elementary inequality $2uv\le u^2 + v^2$ and then integrating with respect to $t$ we obtain 
\beq\label{eq:upper_non_parametric1}
\E\left[\langle \widetilde{\bTheta}_{\hat{z}}-\bTheta_0, \bE\rangle - \frac{1}{8}\|\widetilde{\bTheta}_{\hat{z}}-\bTheta_0\|_F^2\right] \leq C \|\bTheta_0\|_{\infty} n\log(k)\ .
\eeq

\noindent 
{\em Control of (II)}. The control of the error due to the Bernoulli noise is more involved. We first consider the intersection of $\cT_z$ with the unit ball in the Frobenius norm 
and construct a $1/4-$net on this set. 
 Then, using the union bound and Bernstein's inequality, we can write a convenient bound on $\langle \bTheta, \bE\rangle$ for any $\bTheta$ from this net. Finally we control (II) using a bound on $\langle \bTheta, \bE\rangle$ on the net and a bound on the supremum norm $\|\widetilde{\bA}_{\hat{z}}-\widetilde{\bTheta}_{\hat{z}}\|_{\infty}$. We control the supremum norm $\|\widetilde{\bA}_{\hat{z}}-\widetilde{\bTheta}_{\hat{z}}\|_{\infty}$ using the definition of $\hat z$ and Bernstein's inequality. 
 
 For any $z\in \cZ_{n,k,n_0}$, define the set
%
$\cT_{z,1}=\{\bTheta\in \cT_z: \ \|\bTheta \|_F\le 1\}$ and denote by ${\widetilde \bA}_z$ the best Frobenius norm approximation of $\bA$ in $\cT_z$. Then, $\widetilde\bE_z = {\widetilde \bA}_z -\widetilde{\bTheta}_z$ is the projection of $\bE$ onto $\cT_z$.
Notice that 
$\tfrac{\widetilde{\bA}_z-\widetilde{\bTheta}_z}{\|\widetilde{\bA}_z-\widetilde{\bTheta}_z\|_F}=\tfrac{\widetilde{\bE}_z}{\|\widetilde{\bE}_z\|_F}$ maximizes $\langle \bTheta', \bE \rangle$ over all $\bTheta'\in \cT_{z,1}$. 
Denote by $\cC_z$ the minimal $1/4$-net on $\cT_{z,1}$ in the Frobenius norm. To each $\bV \in \cC_z$, we associate 
\[\widetilde{\bV} \in\arg\min_{\bTheta' \in \cT_{z,1}\ \cap \  \cB_{\|\cdot\|_F}(\bV ,1/4)}\|\bTheta'\|_{\infty}\  ,\]  which is a matrix minimizing the entry-wise supremum norm over the Frobenius ball $\cB_{\|\cdot\|_F}(\bV ,1/4)$ of radius 1/4 centered at ${\bV}$. Finally, define $\widetilde{\cC}_z:=\{\widetilde{\bV}:\ \bV\in \cC_z\}$.

A standard bound for covering numbers implies that $\log|\widetilde{\cC_z}|\leq Ck^2$, where $|S|$ denotes the cardinality of set $S$. By Bernstein's inequality combined with the union bound, we find that with probability greater than $1-e^{-t}$,
\[\langle \bTheta, \bE\rangle\leq {2}\sqrt{\|\bTheta_0\|_\infty(n\log(k)+ k^2 + t)}+ {\frac{4}{3}}\|\bTheta\|_{\infty}(n\log(k)+k^2+t)\]
simultaneously for all $\bTheta\in \tilde{\cC}_{z}$ with any $z\in \cZ_{n,k,n_0}$. Here, we have used that $\|\bTheta\|_F\le 1$ for all $\bTheta\in \tilde{\cC}_{z}$.

Assume w.l.o.g. that $\widetilde{\bA}_{\hat{z}}-\widetilde{\bTheta}_{\hat{z}} \ne 0$. By the definition of $\tilde{\cC}_{z}$, there exists $\bTheta\in \widetilde{\cC}_{\hat{z}}$, such that  
$$
\|\bTheta - \tfrac{\widetilde{\bA}_{\hat{z}}-\widetilde{\bTheta}_{\hat{z}}}{\|\widetilde{\bA}_{\hat{z}}-\widetilde{\bTheta}_{\hat{z}}\|_F}\|_F\leq \frac1{2}  \quad \text{and} \quad \|\bTheta\|_{\infty}\leq \frac{\|\widetilde{\bA}_{\hat{z}}-\widetilde{\bTheta}_{\hat{z}}\|_{\infty}}{\|\widetilde{\bA}_{\hat{z}}-\widetilde{\bTheta}_{\hat{z}}\|_F}.
$$
Note that for this $\bTheta$, the matrix $2\left(\bTheta - \tfrac{\widetilde{\bA}_{\hat{z}}-\widetilde{\bTheta}_{\hat{z}}}{\|\widetilde{\bA}_{\hat{z}}-\widetilde{\bTheta}_{\hat{z}}\|_F}\right)$ belongs to $\cT_{z,1}$. Thus, 
\beqn
\langle  \tfrac{\widetilde{\bA}_{\hat{z}}-\widetilde{\bTheta}_{\hat{z}}}{\|\widetilde{\bA}_{\hat{z}}-\widetilde{\bTheta}_{\hat{z}}\|_F},\bE\rangle \leq  
\langle \bTheta,\bE\rangle + \frac{1}{2}\ \max_{\bTheta'\in \cT_{\hat{z},1}}\langle \bTheta',\bE\rangle
=
\langle \bTheta,\bE\rangle + \frac{1}{2}\left \langle \tfrac{\widetilde{\bA}_{\hat{z}}-\widetilde{\bTheta}_{\hat{z}}}{\|\widetilde{\bA}_{\hat{z}}-\widetilde{\bTheta}_{\hat{z}}\|_F},\bE \right \rangle
\eeqn
since $\tfrac{\widetilde{\bA}_{\hat{z}}-\widetilde{\bTheta}_{\hat{z}}}{\|\widetilde{\bA}_{\hat{z}}-\widetilde{\bTheta}_{\hat{z}}\|_F}$  maximizes $\langle \bTheta', \bE \rangle$ 
over all $\bTheta'\in \cT_{\hat{z},1}$. Using the last two displays we obtain that 
\beq\label{variance_term1}
\langle  \widetilde{\bA}_{\hat{z}}-\widetilde{\bTheta}_{\hat{z}},\bE\rangle \leq {4}\|\widetilde{\bA}_{\hat{z}}-\widetilde{\bTheta}_{\hat{z}}\|_F\sqrt{\|\bTheta_0\|_{\infty}(n\log(k)+ k^2 + t)}+ {\frac{8}{3}}\|\widetilde{\bA}_{\hat{z}}-\widetilde{\bTheta}_{\hat{z}}\|_{\infty}(n\log(k)+k^2+t)  
\eeq
with probability greater than $1-e^{-t}$.

\bigskip

Next, we are looking for a bound on $\|\widetilde{\bA}_{\hat{z}}-\widetilde{\bTheta}_{\hat{z}}\|_{\infty}$. Since ${\widetilde \bA}_{\hat z} -\widetilde{\bTheta}_{\hat z}=\widetilde\bE_{\hat z}$  we have
\[
\left[\widetilde{\bA}_{\hat z}-\widetilde{\bTheta}_{\hat z}\right]_{ij}= \frac{\sum_{l'\neq l:\ \hat  z(l')=\hat z(i), \hat z(l)=\hat z(j)}\bE_{l'l}}{|(l',l):\  l'\neq l, \hat z(l')=\hat z(i), \hat z(l)=\hat z(j) |} 
\] 
for all $i\neq j$. Consequently, we have
$\|\widetilde{\bA}_{\hat{z}}-\widetilde{\bTheta}_{\hat{z}}\|_{\infty} \leq \sup_{m=n_0,\ldots, n}\sup_{s=n_0,\ldots, n} X_{ms}$ where
\[
X_{ms}:=\sup_{\cV_1: |\cV_1|=m}\ \sup_{\cV_2: |\cV_2|=s}\frac{\left|\sum_{(i,j)\in \cV_1\times \cV_2:\ i\neq j }\bE_{ij}\right |}{ms- |\cV_1\cap \cV_2|}.
\]
Since $n_0\geq 2$, we have $ms- |\cV_1\cap \cV_2|\ge ms - m\wedge s \geq ms/2$ for all $m,s\ge n_0$. Furthermore $|\{\cV_1: |\cV_1|=m\}| \le {n \choose m} \le (en/m)^m$. Therefore,
Bernstein's inequality combined with the union bound over $\cV_1, \cV_2$ leads to
\[
\P\left[X_{ms}
\leq C\left(\sqrt{\|\bTheta_0\|_{\infty}\frac{m\log(en/m)+ s\log(en/s)+t}{ms}} + \frac{m\log(en/m)+ s\log(en/s)+t}{ms}\right)\right]\geq 1-2e^{-t}
\]
for any $t>0$.
From a union bound over all integers $m,s\in [n_0,n ]$, we conclude that 
\beq\label{variance_term2}
\P\left[\|\widetilde{\bA}_{\hat{z}}-\widetilde{\bTheta}_{\hat{z}}\|_{\infty}\leq C\left(\|\bTheta_0\|_{\infty}+ \frac{\log(n/n_0)+t}{n_0} \right) \right]\geq 1-2e^{-t}.
\eeq
Decoupling \eqref{variance_term1} by use of the inequality $2uv\le u^2+ v^2$ and combining the result with \eqref{variance_term2} we obtain 
\beqn
\langle  \widetilde{\bA}_{\hat{z}}-\widetilde{\bTheta}_{\hat{z}},\bE\rangle&\leq& \frac{\|\widetilde{\bA}_{\hat{z}}-\widetilde{\bTheta}_{\hat{z}}\|^2_F}{16}+ C\Big(\|\bTheta_0\|_{\infty}(n\log(k)+ k^2+ t) \\&&  + \frac{\log(n/n_0)(n\log(k)+ k^2) + t(n\log(k)+ k^2+ t) }{n_0}\Big)
\eeqn
with probability greater than $1-3e^{-t}$. Integrating with respect to $t$ leads to
\beq\label{eq:upper_non_parametric2}
\E\left[\langle  \widetilde{\bA}_{\hat{z}}-\widetilde{\bTheta}_{\hat{z}},\bE\rangle-  \frac{\|\widetilde{\bA}_{\hat{z}}-\widetilde{\bTheta}_{\hat{z}}\|^2_F}{16}\right]\leq C\left(\|\bTheta_0\|_{\infty}(n\log(k)+ k^2) +   \frac{\log(n/n_0)(n\log(k)+ k^2)}{n_0} \right) .
\eeq
Now, note that $\widehat{\bTheta} = \widetilde{\bA}_{\hat{z}}$, and hence $\Vert \widetilde{\bTheta}_{\hat{z}}-\bTheta_0\Vert_F\le \Vert \widehat{\bTheta}-\bTheta_0\Vert_F$ by definition of  $\widetilde{\bTheta}_{\hat{z}}$. Thus, $\|\widetilde{\bA}_{\hat{z}}-\widetilde{\bTheta}_{\hat{z}}\|_F = \Vert \widehat{\bTheta} - \widetilde{\bTheta}_{\hat{z}}\Vert_F\le 2\Vert \widehat{\bTheta}-\bTheta_0\Vert_F$.
These remarks, together with \eqref{eq:upper_non_parametric1} and \eqref{eq:upper_non_parametric2}, imply 
\beq\nonumber
\E\left[\langle  \widehat{\bTheta}-\bTheta_{0},\bE\rangle\right]
\le 
\frac{3}{8}\E \|\widehat{\bTheta}-\bTheta_0\|^2_F
+ 
C\left(\|\bTheta_0\|_{\infty}(n\log(k)+ k^2) +   \frac{\log(n/n_0)(n\log(k)+ k^2)}{n_0} \right) .
\eeq
The 
result of the proposition now follows from the last inequality and  \eqref{eq:least_squares}.
\subsection{Proof of Proposition \ref{prp:l2_Thres}}
We first follow the lines of the proof of Proposition~\ref{prp:l2}. As there, we have
\beqn
\|\widehat{\bTheta}^{r}- \bTheta_0\|_F^2\leq \|\bTheta_0-\bTheta_*\|_F^2+ 2\langle \widetilde{\bTheta}_{\hat{z}_r}-\bTheta_0, \bE \rangle+ 2\langle \widehat{\bTheta}^{r}-\widetilde{\bTheta}_{\hat{z}_r}, \bE \rangle+2\langle \bTheta_0-\bTheta_*, \bE\rangle .
\eeqn
Here, $\E \langle \bTheta_0-\bTheta_*, \bE\rangle =0$, and analogously to \eqref{eq:upper_non_parametric1},
\beq\label{eq:upper_non_parametric1_n}
\E\left[\langle \widetilde{\bTheta}_{\hat{z}_r}-\bTheta_0, \bE\rangle - \frac{1}{8}\|\widetilde{\bTheta}_{\hat{z}_r}-\bTheta_0\|_F^2\right] \leq C \|\bTheta_0\|_{\infty} n\log(k).
\eeq
It remains to obtain a bound on the expectation of $\langle \widehat{\bTheta}^{r}-\widetilde{\bTheta}_{\hat{z}_r}, \bE \rangle$, which corresponds to the term (II) in the proof of Proposition~\ref{prp:l2} (the error due to the Bernoulli noise). To do this, we consider the subset $\cA$ of matrices in $\cT_{\hat{z}_r}$ such that their supremum norm is bounded by $2r$ and Frobenius norm is bounded by $\|\widehat{\bTheta}^{r}-\widetilde{\bTheta}_{\hat{z}_r}\|_F$. As $\widehat{\bTheta}^{r}-\widetilde{\bTheta}_{\hat{z}_r}$ belongs to this set it is enough to obtain a bound on the supremum of $\langle\bTheta, \bE \rangle$ for $\bTheta\in \cA$. To control this supremum, we  construct a finite subset $\cC_{z}^*$ that approximates well the set $\cA$ both in the Frobenius norm and in the supremum norm.

  Consider the set
$$
\cA = \{ \bTheta\in \cT_{\hat{z}_r}: \  \|\bTheta\|_{\infty}\leq 2r,\ \|\bTheta\|_F\leq \|\widehat{\bTheta}^{r}-\widetilde{\bTheta}_{\hat{z}_r}\|_F\}.
$$
Then,
\beq\label{eq:definition_azr}
\langle \widehat{\bTheta}^{r}-\widetilde{\bTheta}_{\hat{z}_r}, \bE \rangle\leq \max_{\bTheta\in \cA}\langle \bTheta , \bE\rangle := \langle \hat{\bT},\bE\rangle _F 
\eeq
where $\hat{\bT}$ is a matrix in $\cA$ that achieves the maximum. If $\| \hat{\bT}\|_F < 2\,r$ we have a trivial bound $\langle \hat{\bT},\bE\rangle _F \le 2rn$ since all components of $\bE$ belong to $[-1,1]$. Thus, it suffices to consider the case $\| \hat{\bT}\|_F \ge 2r$.  In order to bound $\langle \hat{\bT},\bE\rangle _F$ in this case, we  construct a finite subset of $\cT_{\hat{z}_r}$ that approximates well  $\hat{\bT}$ both in the Frobenius norm and in the supremum norm. 

For each $z\in \cZ_{n,k}$, let $\cC_z$ be a minimal $1/4$-net of $\cT_{z,1}$ in the Frobenius norm. Set $\epsilon_0= r$ and $\epsilon_q= 2^q\epsilon_0$ for any integer $q=1,\dots, q_{\max},$ where $q_{\max}$ is the smallest integer such that $2rn\le \epsilon_{q_{\max}}$. Cearly, $q_{\max} \le C \log(n)$. 
For any $\bV\in \cC_z$, any $q=0,\dots, q_{\max}$, and any matrix $\bU \in \{-1,0,1\}^{k\times k}$, define a matrix $\bV^{q,\bU,z}\in \mathbb{R}^{n\times n}$ with elements $\bV^{q,\bU,z}_{ij}$ such that $\bV^{q,\bU,z}_{ii}=0$ for all $i\in [n]$, and for all $i\neq j$, 
\beq\label{eq:concentration_bV0}
\bV^{q,\bU,z}_{ij}=\mathrm{sign}(\bV_{ij})\left (|\epsilon_q \bV_{ij}|\wedge (2r) \right)(1-|\bU_{z(i)z(j)}|)+ r\bU_{z(i)z(j)}.
\eeq
Finally, denote by $\cC_{z}^*$ the set  of all such matrices: $$\cC_{z}^*:=\{\bV^{q,\bU,z}: \  \bV\in \cC_z, \ q=0,\dots, q_{\max},\ \bU \in \{-1,0,1\}^{k\times k}\}.$$
For any $z\in \cZ_{n,k}$ we have $\log|\cC_{z}^*|\leq C (k^2 + \log\log(n))$ while $\log | \cZ_{n,k}|\le n\log (k)$. Also, the variance of $\bE_{ij}$ satisfies $\Var (\bE_{ij})\le \|\bTheta_0\|_{\infty}\le r$, and  $\bE_{ij}\in [-1,1]$. Thus, from Bernstein's inequality combined with the union bound we obtain that, with probability greater than $1-e^{-t}$,
$$
\langle \bV,\bE\rangle\leq 2\|\bV\|_F \sqrt{r(k^2 + n\log(k) +t )}+  { \frac{4}{3}}r(k^2 + n\log(k) +t )
$$
simultaneously for all matrices $\bV\in \cC_{z}^*$ and all $z\in \cZ_{n,k}$. Here, we have used that $ \|\bV\|_{\infty}\le 2r$ for all $\bV\in \cC_{z}^*$, cf. \eqref{eq:concentration_bV0}. It follows that, with probability greater than $1-e^{-t}$, 
\beq\label{eq:concentration_bV}
\langle \bV,\bE\rangle - \frac{1}{50}\|\bV\|^2_F \le   Cr(k^2 + n\log(k) +t ) 
\eeq
simultaneously for all matrices $\bV\in \cC_{z}^*$ and all $z\in \cZ_{n,k}$. 

\smallskip

We now use the following lemma proved in Subsection \ref{subsec:lemmas}.
\begin{lemma}\label{lem:covering}
If $\| \hat{\bT}\|_F \ge 2r$ there exists a matrix $\hat{\bV}$ in $\cC_{\hat{z}_r}^*$, such that 
\begin{itemize}
 \item $\|\hat{\bT}-\hat{\bV}\|_F \leq \|\hat{\bT}\|_F/4,$
 \item $\|\hat{\bT}-\hat{\bV}\|_{\infty}\leq r.$
\end{itemize}
\end{lemma}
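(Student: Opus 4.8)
\emph{Plan.} Recall that $\hat{\bT}$ is block-constant for the partition $\hat{z}_r$, say $\hat{\bT}_{ij}=\bQ_{\hat{z}_r(i)\hat{z}_r(j)}$ for $i\neq j$ with a symmetric $k\times k$ matrix $\bQ$ obeying $\|\bQ\|_\infty\le 2r$, and that we are in the case $\|\hat{\bT}\|_F\ge 2r$. The difficulty is that the only handle on the net $\cC_{\hat{z}_r}$ is in Frobenius norm, so a rescaled net element approximates $\hat{\bT}$ well in $\|\cdot\|_F$ but may be arbitrarily far from $\hat{\bT}$ in $\|\cdot\|_\infty$. The term $r\bU_{z(i)z(j)}$ in the definition of $\bV^{q,\bU,z}$, which for $\bU_{ab}\neq 0$ overrides the rescaled-net part of block $(a,b)$ by the value $\pm r$, is precisely the device for repairing the supremum norm; the point is that this repair costs nothing in Frobenius norm.

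\emph{Step 1: clipped rescaled net approximation.} Let $\epsilon_q$ be the smallest element of the grid $\{2^{q'}r:q'\ge 0\}$ with $\epsilon_q\ge\|\hat{\bT}\|_F$; since $2r\le\|\hat{\bT}\|_F\le 2rn$ one has $0\le q\le q_{\max}$ and $\|\hat{\bT}\|_F\le\epsilon_q<2\|\hat{\bT}\|_F$. Then $\hat{\bT}/\epsilon_q\in\cT_{\hat{z}_r,1}$, so the defining property of $\cC_{\hat{z}_r}$ (together with the dyadic grid $\{\epsilon_q\}$) yields $\bV\in\cC_{\hat{z}_r}$ with $\|\epsilon_q\bV-\hat{\bT}\|_F=\epsilon_q\|\bV-\hat{\bT}/\epsilon_q\|_F\le\|\hat{\bT}\|_F/4$ — matching the claimed constant may require taking $\cC_z$ to be a minimal $\eta$-net for a small enough absolute $\eta$, which leaves the bounds $\log|\cC_z^*|\le C(k^2+\log\log n)$ untouched. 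Set $\bV^{\circ}:=f(\epsilon_q\bV)$, where $f$ clips each coordinate to $[-2r,2r]$; note $f(x)=\mathrm{sign}(x)(|x|\wedge 2r)$, so $\bV^\circ$ is block-constant for $\hat{z}_r$ with zero diagonal and block values $\bV^{\circ}_{ab}=\mathrm{sign}(\bV_{ab})(|\epsilon_q\bV_{ab}|\wedge 2r)$. Since $f$ is a $1$-Lipschitz map fixing $[-2r,2r]$ and all entries of $\hat{\bT}$ lie in $[-2r,2r]$, we get $\|\bV^{\circ}-\hat{\bT}\|_F\le\|\epsilon_q\bV-\hat{\bT}\|_F\le\|\hat{\bT}\|_F/4$.

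\emph{Step 2: blockwise repair and conclusion.} For each $(a,b)$ let $\hat{\bV}_{ab}$ be whichever of the three values $-r,\ \bV^{\circ}_{ab},\ r$ is closest to $\bQ_{ab}$, and set $\bU_{ab}\in\{-1,0,1\}$ accordingly ($\bU_{ab}=0$ if $\bV^{\circ}_{ab}$ is chosen, $\bU_{ab}=\pm 1$ if $\pm r$ is chosen). Plugging $\bU_{ab}=0$ and $\bU_{ab}=\pm1$ into the defining formula shows that $\hat{\bV}:=\bV^{q,\bU,\hat{z}_r}$ has exactly these block values (and zero diagonal, as does $\hat{\bT}$), so $\hat{\bV}\in\cC_{\hat{z}_r}^*$. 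Writing $e_{ab}:=|\hat{\bV}_{ab}-\bQ_{ab}|$ we have $e_{ab}=\min\{|\bV^{\circ}_{ab}-\bQ_{ab}|,\ |\bQ_{ab}+r|,\ |\bQ_{ab}-r|\}=\min\{|\bV^{\circ}_{ab}-\bQ_{ab}|,\ \bigl||\bQ_{ab}|-r\bigr|\}$. From $e_{ab}\le\bigl||\bQ_{ab}|-r\bigr|$ and $|\bQ_{ab}|\le 2r$ we get $e_{ab}\le r$ for every $(a,b)$, hence $\|\hat{\bV}-\hat{\bT}\|_\infty\le r$. From $e_{ab}\le|\bV^{\circ}_{ab}-\bQ_{ab}|$ for every $(a,b)$, summing squares over all off-diagonal entries gives $\|\hat{\bV}-\hat{\bT}\|_F\le\|\bV^{\circ}-\hat{\bT}\|_F\le\|\hat{\bT}\|_F/4$. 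These are the two assertions of the lemma.

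\emph{Main obstacle.} The crux is the simultaneous control of both norms: the repair must make the supremum error at most $r$ on \emph{every} block while never enlarging the Frobenius error. This works because, $\bQ_{ab}$ lying in $[-2r,2r]$, the nearer of $\pm r$ is automatically within $r$ of $\bQ_{ab}$, and we switch from $\bV^{\circ}_{ab}$ to $\pm r$ only when the latter is closer — so passing to $\hat{\bV}$ can only decrease each per-entry discrepancy. The remaining ingredients (the clip $f$ being a contraction towards $[-2r,2r]$, the scale $\epsilon_q$ landing in $\{0,\dots,q_{\max}\}$, which is where $2r\le\|\hat{\bT}\|_F\le 2rn$ enters, and pinning down the exact numerical constant in the Frobenius bound) are routine.
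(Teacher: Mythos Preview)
Your proof is correct and follows essentially the same approach as the paper's: pick a dyadic scale $\epsilon_q$ near $\|\hat{\bT}\|_F$, approximate $\hat{\bT}$ by a clipped rescaled net element, then choose $\bU$ blockwise to repair the $\|\cdot\|_\infty$ error without enlarging the $\|\cdot\|_F$ error. The only presentational difference is that you construct $\hat{\bU}$ explicitly as the blockwise-nearest choice among $\{-r,\bV^\circ_{ab},r\}$ and verify both bounds directly, whereas the paper defines $\hat{\bU}$ as the Frobenius minimizer over $\bU$, observes that (by decoupling over blocks) this is simultaneously a sup-norm minimizer, and then bounds each norm by comparison with a separate suboptimal choice ($\bU=\mathbf{0}$ for the Frobenius bound, $\bU^*_{ab}=\mathrm{sign}(\hat{\bT}_{ij})$ for the sup-norm bound). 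Your caveat about needing a slightly finer net (an $1/8$-net rather than a $1/4$-net) to hit the exact constant $1/4$ is well taken: with a $1/4$-net and the dyadic grid one only gets $\|\epsilon_q\bV-\hat{\bT}\|_F<\|\hat{\bT}\|_F/2$, and the paper's own chain has the same shortfall, masked by writing $\|\hat{\bT}\|_F-\epsilon_{\hat q}$ where the nonnegative quantity $\epsilon_{\hat q}-\|\hat{\bT}\|_F$ belongs; your proposed fix is harmless for the covering-number bound and for the rest of the argument.
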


Thus, on the event $\| \hat{\bT}\|_F \ge r$, as a consequence of Lemma \ref{lem:covering} we obtain that $ 2(\hat{\bT}-\hat{\bV})\in \cA$. This and the definition of $\hat \bT$ imply $\langle \hat{\bT}-\hat{\bV},\bE\rangle\leq \langle \hat{\bT},\bE\rangle _F/2$, so that $\langle \hat{\bT},\bE\rangle _F\leq 2 \langle \hat{\bV},\bE\rangle$, and thus 
$$\langle  \widehat{\bTheta}^{r}-\widetilde{\bTheta}_{\hat{z}_r},\bE\rangle/2\leq \langle \hat{\bV},\bE\rangle.$$
 Furthermore, by Lemma \ref{lem:covering}
 $$\|\hat{\bV}\|_F \leq 5\|\hat{\bT}\|_F/4\le 5\|\widehat{\bTheta}^{r}-\widetilde{\bTheta}_{\hat{z}_r}\|/4.
 $$
These remarks (recall that they hold on the event $\| \hat{\bT}\|_F \ge r$), and \eqref{eq:concentration_bV} yield 
\[\P\left[\Bigg(\langle  \widehat{\bTheta}^{r}-\widetilde{\bTheta}_{\hat{z}_r},\bE\rangle - \frac{\|\widehat{\bTheta}^{r}-\widetilde{\bTheta}_{\hat{z}_r}\|^2_F}{16}\Bigg) \1_{\{\| \hat{\bT}\|_F \ge r\}} \ge Cr(k^2 + n\log(k) +t )\right]\leq e^{-t}\]
for all $t>0$. Integrating with respect to $t$ and using that $\langle  \widehat{\bTheta}^{r}-\widetilde{\bTheta}_{\hat{z}_r},\bE\rangle\le rn$ for $\| \hat{\bT}\|_F < r$ we obtain
\[\E_{\bTheta_0}\left[\langle  \widehat{\bTheta}^{r}-\widetilde{\bTheta}_{\hat{z}_r},\bE\rangle-  \frac{\|\widehat{\bTheta}^{r}-\widetilde{\bTheta}_{\hat{z}_r}\|^2_F}{16}\right]\leq Cr (n\log(k)+ k^2).
\]
In view of this inequality and \eqref{eq:upper_non_parametric1_n} the proof is now finished by the same argument as in Proposition~\ref{prp:l2}.

\subsection{Proof of Proposition \ref{lower_sparse_sbm}}

The proof follows the lines of Theorem 2.2 in \cite{gao2014rate}.  So, for brevity, we only outline the differences from that proof.  The remaining details can be found in \cite{gao2014rate}. First, note that to prove the theorem it suffices to obtain separately the lower bounds of the order $\rho_n (k/n)^2 \wedge \rho_n^2$ and of the order $ \rho_n\log(k)/n \wedge \rho_n^2$. Next, note that the Kullback-Leibler divergence $\cK(\rho_n p, \rho_n q)$ between two Bernoulli distributions with parameters $\rho_n p$ and $\rho_n q$ such that $1/4< p, q<3/4$ satisfies 
\begin{eqnarray}\label{kullb}
\cK(\rho_n p, \rho_n q) &= &\rho_n q \log\left(\frac{q}{p}\right)+ (1-\rho_n q)\log\left(\frac{1-\rho_n q}{1-\rho_n p}\right)\\
& \leq&   \rho_n \frac{q(q-p)}{p}+ \rho_n\frac{1-\rho_n q}{1-\rho_n p}(p-q) = \rho_n \frac{(q-p)^2}{p(1-\rho_n p)} \leq \frac{16}{3}\rho_n (q-p)^2 .
\nonumber
\end{eqnarray}
The main difference from the proof in \cite{gao2014rate} is that now the matrices $\bTheta_0$ defining the probability distributions in the Fano lemma depend on $\rho_n$. Namely, to prove the $\rho_n (k/n)^2 \wedge \rho_n^2$ bound we consider matrices of connection probabilities with elements $\frac{\rho_n}{2} + c_1\sqrt{\rho_n}\left(\frac{k}{n}\wedge\sqrt{\rho_n}\right)\omega_{ab}$ with suitably chosen $\omega_{ab}\in \{0,1\}$ and $c_1>0$ small enough (for $\rho_n=1$ this coincides with the definition of elements $\bQ^\omega_{ab}$ in \cite{gao2014rate}). Then the squared Frobenius distance between matrices $\bTheta_0$ in the corresponding set is of the order $n^2(\rho_n (k/n)^2 \wedge \rho_n^2)$, which leads to the desired rate, whereas in view of \eqref{kullb} the Kullback-Leibler divergences between the probability measures in this set are bounded by $Cn^2\rho_n \left(\frac{k^2}{n^2\rho_n} \wedge 1\right)\le Ck^2$. Thus, the lower bound of the order  $\rho_n (k/n)^2 \wedge \rho_n^2$ follows. 

The argument used to prove the lower bound of the order $ \rho_n\log(k)/n \wedge \rho_n^2$ is quite analogous. To this end, we modify the corresponding construction of  \cite{gao2014rate} only in that we take the connection probabilities of the form $\frac{\rho_n}{2} + c_2\sqrt{\rho_n}\left(\sqrt{\frac{\log(k)}{n}}\wedge\sqrt{\rho_n}\right)\omega_{a}$ with suitably chosen $\omega_{a}\in \{0,1\}$ and $c_2>0$ small enough (for $\rho_n=1$ this coincides with the definition of probabilities $B_a$ in the proof of \cite{gao2014rate}).


\subsection{ Proof of Proposition \ref{approx_sbm_balanced}}

We prove that there exists a random matrix $\overline \bTheta$  measurable with respect to $\xi_1,\dots,\xi_n$ and with values in $\cT_{n_0}[k]$ satisfying
  \beq\label{eq:risk_SBM_approx_balanced2}
 \E\left (\frac{1}{n^{2}}\|\bTheta_0-\overline \bTheta\|_F^2\right )\leq CM^{2}\rho^{2}_{n}\left (\dfrac{1}{k^{2}}\right )^{\alpha\wedge 1}.
  \eeq
Obviously, this implies \eqref{eq:risk_SBM_approx_balanced}. To obtain such $\overline \bTheta$ we construct a balanced partition $z^{*}$ where the first $k-1$ classes contain exactly $n_0$ elements and the last class contains $n-n_0(k-1)$ elements. Then we construct $\overline \bTheta$ using block averages on the blocks given by $z^{*}$.

Let  $n=n_0k+r$ where $r$ is a remainder term between $0$ and $n_0-1$. Define $z^{*}\;:\;[n]\rightarrow[k]$ by 
\[\left (z^{*}\right )^{-1}(a)=\left \{i\in[n]\,:\,\xi_i=\xi_{(j)}\;\text{for some}\;j\in[(a-1)n_0+1,an_0]\right \}\]
for each $a\in\{1,\dots,k-1\}$ and
\[\left (z^{*}\right )^{-1}(k)=\left \{i\in[n]\,:\,\xi_i=\xi_{(j)}\;\text{for some}\;j\in[(k-1)n_0+1,n]\right \}\]
where $\xi_{(j)}$ denotes the $j$th order statistic.
Note that with this partition the first $k-1$ classes contain $n_0$ elements and the last class contains $n_0+r$ elements. We define 
\begin{equation*}
n^{*}_{ab}=\left \{
\begin{array}{llll}
n_0^{2}, & \mbox{if} \quad    a\not=b, a\not=k,b\not=k,\\
(n_0+r)n_0, & \mbox{if} \quad    a=k \,\text{or}\, b=k\,\text{and}\, a\not=b,\\
(n_0-1)n_0, & \mbox{if} \quad    a=b \,\text{and}\, a\not=k,\\
(n_0+r)(n_0+r-1), & \mbox{if} \quad    a=b=k.
\end{array} 
\right.
\end{equation*} 
Using the partition $z^{*}$, we define the block average
\[\bQ^{*}_{ab}=\frac1{n^{*}_{ab}} \ \sum_{i\in (z^{*})^{-1}(a),j\in (z^{*})^{-1}(b),i\not =j}\rho_{n}W(\xi_{i},\xi_{j}).\]
Finally, the approximation $\overline\bTheta$ of $\bTheta_{0}$ 
is defined as a symmetric matrix with entries $\overline\bTheta_{ij}=\bQ^{*}_{z^{*}(i)z^{*}(j)}$ for all $i\ne j$ and $\overline\bTheta_{ii}=0$ for all $i$.
  We have  
\begin{equation*}
\begin{split}
 \E\left (\frac{1}{n^{2}}\|\bTheta_0-\overline\bTheta\|_F^2\right )&= \frac{1}{n^{2}} \sum_{a\in[k],b\in[k]}\E\sum_{i\in (z^{*})^{-1}(a),j\in (z^{*})^{-1}(b),i\not =j}\left (\bTheta_{ij}-\bQ^{*}_{ab}\right )^{2}\\& \hskip -2.5 cm   =
 \frac{1}{n^{2}} \sum_{a\in[k],b\in[k]}\E\sum_{i\in (z^{*})^{-1}(a),j\in (z^{*})^{-1}(b),i\not =j}\left (\rho_{n}W(\xi_{i},\xi_{j})-\dfrac{\sum_{u\in (z^{*})^{-1}(a),v\in (z^{*})^{-1}(b),u\not =v}\rho_{n}W(\xi_{u},\xi_{v})}{n^{*}_{ab}}\right )^{2}.
\end{split}
\end{equation*}
Define $J_{a}= [(a-1)n_0+1,an_0]$ if $a<k$ and $J_k= [(k-1)n_0+1,n]$. By definition of $z^{*}$ we have 
\begin{equation}\label{prop_2_1}
\begin{split}
 \E\left (\frac{1}{n^{2}}\|\bTheta_0-\overline \bTheta\|_F^2\right )&= 
 \frac{\rho_{n}^{2}}{n^{2}} \sum_{a\in[k],b\in[k]}\E\sum_{i\in J_a,j\in J_b,i\not =j}\left (W(\xi_{(i)},\xi_{(j)})-\dfrac{\sum_{u\in J_a,v\in J_b,u\not =v}W(\xi_{(u)},\xi_{(v)})}{n^{*}_{ab}}\right )^{2}\\&\leq 
 \frac{\rho_{n}^{2}}{n^{2}} \sum_{a\in[k],b\in[k]}\sum_{i\in J_a,j\in J_b,i\not =j}\left (\dfrac{1}{n^{*}_{ab}}\sum_{u\in J_a,v\in J_b,u\not =v}\E\left (W(\xi_{(i)},\xi_{(j)})-W(\xi_{(u)},\xi_{(v)})\right )^{2}\right ).
\end{split}
\end{equation}
Using \eqref{lip} and Jensen's inequality we obtain
\begin{equation*}
\begin{split}
\E\left (W(\xi_{(i)},\xi_{(j)})-W(\xi_{(u)},\xi_{(v)})\right )^{2}&\leq M^2\E\left[\left (\vert\xi_{(i)}-\xi_{(u)}\vert^{\alpha'}+\vert\xi_{(j)}-\xi_{(v)}\vert^{\alpha'}\right )^{2}\right]
\\&
\leq 2M^2\left ((\E\vert\xi_{(i)}-\xi_{(u)}\vert^{2})^{\alpha'}
+(\E\vert\xi_{(j)}-\xi_{(v)}\vert^{2})^{\alpha'} \right)
\end{split}
\end{equation*}
where we set for brevity $\alpha'=\alpha\wedge 1$.
Note that by definition of $z^{*}$ we have $\vert i-u\vert < 2n_0$ and $\vert j-v\vert < 2n_0$. Therefore, application of Lemma \ref{lemma_diff_ordered_stat} (see Section \ref{subsec:lemmas}) leads to 
\begin{equation*}
\begin{split}
\E\left (W(\xi_{(i)},\xi_{(j)})-W(\xi_{(u)},\xi_{(v)})\right )^{2}&\leq C(n_0/n)^{2\alpha'}\leq C(1/k)^{2\alpha'}
\end{split}
\end{equation*}
where we have used that $k=\lfloor n/n_0\rfloor$. Plugging this bound into \eqref{prop_2_1}  proves the proposition.


\subsection{Proof of Proposition \ref{prp:bias_sbm}}

For any $W_0\in \cW[k]$, we first construct an ordered graphon $W'$  isomorphic to $W_0$ and we set $f'=\rho_nW'$. Then we construct an ordered empirical graphon $\widehat f'$ isomorphic to $\widetilde{f}_{\bTheta_0}$ and we estimate the $\delta(\cdot,\cdot)$-distance between these two ordered versions $f'$ and~$\widehat f'$.

 Consider the matrix ${\bTheta}_0'$ with entries $({\bTheta}_0')_{ij}=\rho_nW(\xi_i,\xi_j)$ for all $i,j$. As opposed to $\bTheta_0$, the diagonal entries of ${\bTheta}_0'$ are not constrained to be null. By the triangle inequality, we  get 
\beq\label{eq:agnostic_decomposition}
\E\left[\delta^2\left(\widetilde{f}_{\bTheta_0} , f_0\right)\right]\leq 2\E\left[\delta^2\left(\widetilde{f}_{\bTheta_0} , \widetilde{f}_{{\bTheta}_0'}\right)\right]+ 2\E\left[\delta^2\left(\widetilde{f}_{{\bTheta}_0'} , f_0\right)\right]\ .
\eeq
Since the entries of $\bTheta_0$ coincide with those of ${\bTheta}_0'$ outside the diagonal, the difference $\widetilde{f}_{\bTheta_0}- \widetilde{f}_{{\bTheta}_0'}$ is null outside of a set of measure $1/n$. Also, the entries of ${\bTheta}_0'$ are smaller than $\rho_n$. It follows that 
$ \E[\delta^2(\widetilde{f}_{\bTheta_0} , \widetilde{f}_{{\bTheta}_0'})]\leq \rho_n^2/n$. Hence, it suffices   to prove that 
$$\E[\delta^2(\widetilde{f}_{{\bTheta}_0'} , f_0)]\leq C\rho_n^2\sqrt{k/n}.$$
We prove this inequality by induction on $k$. The result is trivial for $k=1$ as $\delta^2\left(\widehat{f}_{\bTheta_0'} , f_0\right)=0$.
Fix some $k>1$ and assume that the result is valid for $\cW[k-1]$. Consider any  $W_0\in \cW[k]$ and let $\bQ\in \bbR^{k\times k}_{\text{sym}}$ and $\phi:[0,1]\to [k]$ be associated to $W_0$ as in definition \eqref{eq:def_step_function}. We assume w.l.o.g. that all the rows of $\bQ$ are distinct and that  $\lambda_a:= \lambda(\phi^{-1}(a))$ is positive for all $a\in[k]$, since otherwise $W_0$ belongs to $\cW[k-1]$. For any $b\in [k]$, define the cumulative distribution function 
 \begin{equation*}
 F_{\phi}(b)=\sum_{a=1}^{b} \lambda_a 
 \end{equation*} 
and set $F_{\phi}(0)=0$. For any $(a,b)\in [k]\times [k]$ define $\Pi_{ab}(\phi)=[F_{\phi}(a-1),F_{\phi}(a))\times [F_{\phi}(b-1),F_{\phi}(b))$
where $\mathds{1}_{A}(\cdot)$ denotes the indicator function of set $A$. Finally, we consider the ordered graphon  
\[W'(x,y)= \sum_{a=1}^k \sum_{b=1}^k \bQ_{ab} \mathds{1}_{\Pi_{ab}(\phi)}(x,y)\ . \]
Obviously, $f'=\rho_n W'$ is weakly isomorphic to $f_0=\rho_n W_0$. 
Let 
$$\widehat{\lambda}_a=\frac1n \sum_{i=1}^n \mathds{1}_{\{ \xi_i\in \phi^{-1}(a)\}}$$ 
be the (unobserved) empirical frequency of group $a$. Here, $\xi_1,\ldots ,\xi_n$
are the i.i.d. uniformly distributed random variables in the graphon model \eqref{sparse_grapnon_mod}. 

Note that the relations $\sum_{a=1}^k \lambda_a=\sum_{a=1}^k \hat\lambda_a=1$ imply
\begin{equation}\label{eq:lam}
\sum_{a: \lambda_a>\widehat{\lambda}_a} (\lambda_a-\widehat{\lambda}_a)=\sum_{a: \widehat{\lambda}_a> \lambda_a} (\hat \lambda_a-\lambda_a).
\end{equation}
Consider a function $\psi:[0,1]\rightarrow [k]$  such that: 
\begin{itemize}
 \item[(i)] $\psi(x)= a$ for all $a\in [k]$ and $x\in [F_{\phi}(a-1), F_{\phi}(a-1)+ \widehat{\lambda}_a\wedge \lambda_a)$,
 \item[(ii)] $\lambda(\psi^{-1}(a))= \widehat{\lambda}_a$ for all $a\in [k]$. 
\end{itemize}
Such  a function $\psi$ exists. Indeed, for each $a$ such that $\lambda_a>\widehat{\lambda}_a$, conditions (i) and (ii) are trivially satisfied if we take $\psi^{-1}(a)=[F_{\phi}(a-1), F_{\phi}(a-1)+ \widehat{\lambda}_a)$, and there is an interval of Lebesgue measure $\lambda_a-\widehat{\lambda}_a$ left non-assigned. Summing over all such $a$, we see that there is a union of intervals with Lebesgue measure  $m_+:=\sum_{a: \lambda_a>\widehat{\lambda}_a} (\lambda_a-\widehat{\lambda}_a)$ left non-assigned. On the other hand, for $a$ such that $\lambda_a<\widehat{\lambda}_a$, we must have $\psi(x)=a$  for $x\in [F_{\phi}(a-1), F_{\phi}(a-1)+ \lambda_a)$ to satisfy (i), while to meet condition (ii) we need additionally to assign $\psi(x)=a$ for $x$ on a set of Lebesgue measure $\hat \lambda_a-\lambda_a$.  Summing over all such $a$, we need additionally to find a set of Lebesgue measure  $m_-:=\sum_{a: \widehat{\lambda}_a> \lambda_a} (\lambda_a-\widehat{\lambda}_a)$ to make such assignments. But this set is readily available as a union of non-assigned intervals for all $a$ such that $\lambda_a>\widehat{\lambda}_a$ since $m_+=m_-$ by virtue of \eqref{eq:lam}.

Finally define the graphon $\widehat{f}'(x,y)= \bQ_{\psi(x),\psi(y)}$. Notice that in view of (ii) $\widehat{f}'$ is weakly isomorphic to the empirical graphon $\widetilde{f}_{\bTheta_0}$. Since $\delta(\cdot,\cdot)$ is a metric on the quotient space $\widetilde{\cW}$, 
\[
\delta^2(\widetilde{f}_{\Theta_0},f_0)= \delta^2(\widehat{f}',f')\leq \int_{[0,1]^2} |\widehat{f}'(x,y)- f'(x,y)|^2 \mathrm{d}x \mathrm{d}y\leq \rho_n^2 \int_{[0,1]^2}  \mathds{1}_{\{f'(x,y)\neq \widehat{f}'(x,y)\}}\mathrm{d}x\mathrm{d}y.
\]
The two functions $f_0(x,y)$ and $\widehat{f}(x,y)$ are equal except possibly the case when either $x$ or $y$ belongs to one of the intervals $[F_{\phi}(a-1)+ \widehat{\lambda}_a\wedge \lambda_a, F_{\phi}(a-1)+\lambda_a)$ for $a\in [k]$. Hence, the Lebesgue measure of the set $\{(x,y): f'(x,y)\neq \widehat{f}'(x,y)\}$ is not greater than $2 m_+ = m_+ + m_-= \sum_{a=1}^k |\lambda_a-\widehat{\lambda}_a|$. Thus, 
\[\delta^2(\widehat{f}_{\bTheta_0'},f_0)\leq \rho_n^2 \sum_{a=1}^k |\lambda_a-\widehat{\lambda}_a|. \]
Since $\xi_1,\ldots ,\xi_n$
are i.i.d. uniformly distributed random variables,  $n\widehat{\lambda}_a$ has a binomial distribution with parameters ($n$, $\lambda_a$). By the Cauchy-Schwarz inequality we get $\E[|\lambda_a-\widehat{\lambda}_a|]\leq \sqrt{\frac{\lambda_a(1-\lambda_a)}{n}}$. Applying again the Cauchy-Schwarz inequality, we conclude that 
\[\E\left[\delta^2(\widehat{f}_{\bTheta_0'},f_0)\right]\leq \frac{\rho_n^2}{\sqrt{n}} \sum_{a=1}^k \sqrt{\lambda_a}\leq \rho_n^2\sqrt{\frac{k}{n}}.\]




\subsection{Proof of Proposition \ref{prp:tographon}}  

Arguing as in the proof of Proposition \ref{prp:bias_sbm}, we have 
\[\E\left[\delta^2\left(\widetilde{f}_{\bTheta_0} , f_0\right)\right]\leq \frac{2\rho_n^2}{n}+   2\E\left[\delta^2\left(\widetilde{f}_{{\bTheta}_0'} , f_0\right)\right],\]
where we recall that ${\bTheta}_0'$ is defined by $({\bTheta}_0')_{ij}=\rho_nW(\xi_i,\xi_j)$ for all $i,j$. 
Hence, it suffices to prove that 
\[\E\left[\delta^2\left(\widetilde{f}_{{\bTheta}_0'} , f_0\right)\right]\leq
C\frac{\rho_n^{2}}{n^{\alpha\wedge 1}}\ . 
 \]

\medskip 

  We have 
\begin{equation*}
\begin{split}
\delta^2\left(\widetilde{f}_{{\bTheta}_0'} , f_{0}\right)=\underset{\tau\in \mathcal M}{\inf}\sum_{i,j=1}^{n}\int _{(j-1)/n}^{j/n}\int _{(i-1)/n}^{i/n}\vert f_0(\tau(x),\tau(y))-({\bTheta}_0' )_{ij}\vert^{2}\mathrm{d}x\mathrm{d}y.
\end{split}
\end{equation*} 
The infimum over all measure-preserving bijections is smaller than the minimum over the subclass of  measure-preserving bijections $\tau$ satisfying the following property
\[\int _{(j-1)/n}^{j/n}\int _{(i-1)/n}^{i/n}W(\tau(x),\tau(y))\mathrm{d}x\mathrm{d}y=
\int _{(\sigma(j)-1)/n}^{\sigma(j)/n}\int _{(\sigma(i)-1)/n}^{\sigma(i)/n}W(x,y)\mathrm{d}x\mathrm{d}y\]
for some permutation $\sigma = (\sigma(1),\dots,\sigma(n))$ of $\{1,\dots, n\}$. Such $\tau$ correspond to permutations of intervals $[(i-1)/n,i/n]$ in accordance with $\sigma$.  For $(x,y)\in [(\sigma(i)-1)/n,\sigma(i)/n]\times [(\sigma(j)-1)/n,\sigma(j)/n]$ we use the bound
\begin{eqnarray}\label{prop_L2_2}
&&\vert \rho_{n}W_{0}(x,y)-\rho_{n}W_{0}(\xi_i,\xi_j)\vert\leq \left \vert \rho_{n}W_{0}(x,y)-\rho_{n}W_{0}\left (\tfrac{\sigma(i)}{n+1},\tfrac{\sigma(j)}{n+1}\right )\right \vert \\
&&+\left \vert \rho_{n}W_{0}\left (\tfrac{\sigma(i)}{n+1},\tfrac{\sigma(j)}{n+1}\right ) - \rho_{n}W_{0}\left (\xi_{(\sigma(i))},\xi_{(\sigma(j))}\right)
\right  \vert +\left \vert  \rho_{n}W_{0}\left (\xi_{(\sigma(i))},\xi_{(\sigma(j))}\right)-\rho_{n}W_{0}(\xi_i,\xi_j)
\right  \vert \nonumber
\end{eqnarray}
where $\xi_{(m)}$ denotes the $m$th largest element of the set $\{\xi_{1},\dots ,\xi_{n}\}$. We choose a random permutation $\sigma = ( \sigma(1),\dots, \sigma(n))$ 
such that $\xi_{\sigma^{-1}(1)}\leq \xi_{\sigma^{-1}(2)}\leq \dots \leq \xi_{\sigma^{-1}(n)}$. With this choice of $\sigma$, we have that $\left (\xi_{(\sigma(i))},\xi_{(\sigma(j))}\right)=(\xi_i,\xi_j)$ and $\left \vert  \rho_{n}W_{0}\left (\xi_{(\sigma(i))},\xi_{(\sigma(j))}\right)-\rho_{n}W_{0}(\xi_i,\xi_j)
\right  \vert=0$ almost surely.

For the first summand in \eqref{prop_L2_2}, as $W_0(\cdot,\cdot)$ satisfies \eqref{lip} and $$\left (\tfrac{\sigma(i)}{n+1},\tfrac{\sigma(j)}{n+1}\right )\in [(\sigma(i)-1)/n,\sigma(i)/n]\times [(\sigma(j)-1)/n,\sigma(j)/n]$$ we get
\begin{eqnarray}\label{prop_L2_200}
&&\left \vert W_0(x,y)-W_0\left (\tfrac{\sigma(i)}{n+1},\tfrac{\sigma(j)}{n+1}\right )\right \vert\leq 2L n^{-\alpha'},
\end{eqnarray}
where we set for brevity $\alpha'=\alpha\wedge 1$. To evaluate the contribution  of the second summand on the right hand side of \eqref{prop_L2_2} we use that 
\begin{eqnarray}\label{prop_L2_21}
\left \vert W_0\left (\tfrac{\sigma(i)}{n+1},\tfrac{\sigma(j)}{n+1}\right ) - W_0\left (\xi_{(\sigma(i))},\xi_{(\sigma(j))}\right)
\right  \vert\leq M\left (\left \vert \tfrac{\sigma(i)}{n+1}-\xi_{(\sigma(i))}\right \vert^{\alpha'}
+\left \vert \tfrac{\sigma(j)}{n+1}-\xi_{(\sigma(j))}\right \vert^{\alpha'} \right ).
\end{eqnarray}
Squaring, integrating and taking expectation we obtain
\begin{eqnarray}
 &&  \E\left [\sum_{i,j}  \int _{(\sigma(j)-1)/n}^{\sigma(j)/n}\int _{(\sigma(i)-1)/n}^{\sigma(i)/n} \, \left \vert \tfrac{\sigma(i)}{n+1}-\xi_{(\sigma(i))}\right \vert^{2\alpha'}\mathrm{d}x\mathrm{d}y
\right]\nonumber
\\
&& =  \frac{1}{n} \E\left [\sum_{m=1}^{n}  \left \vert \tfrac{m}{n+1}-\xi_{(m)}\right \vert^{2\alpha'}
\right] \nonumber
\\ \label{prop_L2_22}
&&\le \max_{m=1,\dots,n} \E \left[  \left \vert \tfrac{m}{n+1}-\xi_{(m)}\right \vert^{2\alpha'}\right] \le \max_{m=1,\dots,n}\left(\Var(\xi_{(m)})\right )^{\alpha'}\leq Cn^{-\alpha'}
\end{eqnarray}
where we have used the relations $\E ( \xi_{(m)})= \tfrac{m}{n+1}$, $\Var(\xi_{(m)})\le C/n$, and Jensen's inequality. The contribution corresponding to the second summand on the right hand side of \eqref{prop_L2_21} is evaluated analogously.
Combining \eqref{prop_L2_200} -- \eqref{prop_L2_22} with  \eqref{prop_L2_2} 
we get
\begin{eqnarray}\label{prop_L2_3}
&&\E\left[\delta^2\left(\widetilde{f}_{{\bTheta}_0'} , f_{0}\right)\right]\leq C\rho_{n}^{2}n^{-\alpha'}.
\nonumber
\end{eqnarray}
\subsection{Proof of Corollary \ref{prp:empirical_smooth_raphon}}
To prove the first part of the corollary, notice that its assumptions imply that the partition is balanced and $\rho_n\ge C k\log(k)/n$. Thus, similarly to Corollary \ref{cor:1}, we obtain from Proposition \ref{prp:l2} that
$$
\E\left[\frac{1}{n^{2}}\|\widehat{\bTheta}- \bTheta_0\|_F^2\right]\leq \frac{C}{n^{2}} \|\bTheta_0-\bTheta_{*,n_0}\|_F^2 + C\rho_n\left (\frac{\log(k)}{n}+ \frac{k^2}{n^{2}}\right ).
$$
Using Proposition \ref{approx_sbm_balanced} to bound the expectation of the first summand on the right hand side  we get
\beq\label{eq:risk_empirical_smooth_2}
  \E\left[\dfrac{1}{n^{2}}\|\widehat{\bTheta} -\bTheta_0\|_F^2 \right] \leq C\left \{ \dfrac{\rho_{n}^2}{k^{2(\alpha\wedge 1)}} + 
  \rho_n\left (\frac{\log(k)}{n}+ \frac{k^2}{n^{2}}\right )
  \right\} .
   \eeq
 Now \eqref{eq:risk_empirical_smooth_1} follows from \eqref{eq:risk_empirical_smooth_2} by taking $k=\left \lceil \left (\rho_nn^{2}\right )^{\frac{1}{2(1+\alpha\wedge 1)}} \right \rceil$ 
 if $\rho_n\geq n^{(\alpha\wedge 1)-1}$  and $k=\Big\lceil \left (\rho_n n\right )^{\frac{1}{2(\alpha\wedge 1)}} \Big\rceil$ if $\rho_n< n^{(\alpha\wedge 1)-1}$.  
 Bound \eqref{eq:risk_empirical_smooth} for the restricted least squares estimator follows from Propositions \ref{prp:l2_Thres} and  \ref{prp:gao}.  

%
%

\subsection{Proof of Corollary \ref{cor:integrated_risk_sbm}}
To prove part (i) of the corollary, we control the size of each block of $\bTheta_0$.  For any $a$ in $[k]$, the number $N_a$ of nodes belonging to block $a$ is a binomial random variable with parameters $(n,1/k)$ since the graphon $W$ is balanced. By Bernstein's inequality,
\[\P\left[N_a-\frac{n}{k}\leq - t\right]\leq \exp\left[-\frac{t^2/2}{n/k+ t/3}\right]\ .\]
Taking $t=n/(2k)$ in the above inequality, we obtain
\[\P\left[N_a\leq \frac{n}{2k}\right]\leq \exp[- Cn/k].\]
This inequlity and the union bound over all $a\in [k]$ imply that the size of all blocks of  $\bTheta_0$ is greater than $n_0$ with probability greater than $1-k\exp(-Cn/k)$. Together with Propositions \ref{prp:l2} and \ref{prp:bias_sbm}, this yields
\[
\E\left[\delta^2\left(\widehat{f} , f_0\right)\right]\leq C'\left(  \rho_n \frac{k^2}{n^2}+  \rho_n \frac{\log(k)}{n}+ \rho_n^2  \sqrt{\frac{k}{n}}\right) + k\exp(-Cn/k) \ ,\]
where the last summand is negligible. The second part of the corollary is a straightforward consequence of  Propositions \ref{prp:l2} and \ref{prp:l2_Thres}. 

\subsection{Proof of Proposition \ref{prp:lower_minimax_integrated_risk}}\label{sec:proof_minimax_integrated_risk}

It is enough to prove  separately the following three  minimax lower bounds.
\begin{eqnarray} \label{eq:lower1}
 \inf_{\widehat{f}}\sup_{W_0\in \cW[k]}\E_{W_0}[\delta^2(\widehat{f}, \rho_n W_0)] &\geq& C \rho^2_n \sqrt{\frac{k-1}{n}}\ ,\\
 \inf_{\widehat{f}}\sup_{W_0\in \cW[k]}\E_{W_0}[\delta^2(\widehat{f}, \rho_n W_0)] &\geq& C 
\left(\rho_n \frac{k^2}{n^2}\wedge \rho_n^2\right),
 \label{eq:lower2} \\
 \inf_{\widehat{f}}\sup_{W_0\in \cW[2]}\E_{W_0}[\delta^2(\widehat{f}, \rho_n W_0)] &\geq& C  
\left( \frac{\rho_n}{n}\wedge \rho_n^2\right).
 \label{eq:lower3}
\end{eqnarray}

\subsubsection{Proof of \eqref{eq:lower1}}

Without loss of generality, it suffices to prove \eqref{eq:lower1}
for $k=2$ and for all $k=16{\bar k}$ with integer ${\bar k}$ large enough. Indeed, for any $k> 2$, there exists $k'\leq k$ such that $k-k'\leq 15$ and $k'$ is either a multiple of $16$ or equal to $2$, so that 
\[
 \inf_{\widehat{f}}\sup_{W_0\in \cW[k]}\E_{W_0}\left[\delta^2\left(\widehat{f}; f_0\right)\right]\geq\inf_{\widehat{f}}\sup_{W_0\in \cW[k']}\E_{W_0}\left[\delta^2\left(\widehat{f}; f_0\right)\right] \geq C  \rho_n^2  \sqrt{\frac{k'-1}{n}}\geq \frac{C}{4}  \rho_n^2  \sqrt{\frac{k-1}{n}}\ .
 \]
We  first consider $k$ which is a multiple of $16$. 
The case $k=2$ is sketched afterwards.

The proof follows the general scheme of reduction to testing of finite number of hypotheses (cf., e.g., \cite{tsybakov_book}). The main difficulty is to obtain the necessary lower bound for the distance $\delta(\cdot,\cdot)$ between the graphons generating the hypotheses since this distance is defined as a minimum over all measure-preserving bijections. We start by constructing a matrix $\bB$ from which the step-function graphons will be derived.

\begin{lemma}\label{lem:existence_hadamard_matrix}
Fix $\eta_0=1/16$ and assume that $k$ is a multiple of 16 and is greater than some  constant. 
There exists a $k\times k$ symmetric $\{-1,1\}$  matrix  $\bB$ satisfying the following two properties.
\begin{itemize}
 \item For any $(a,b)\in [k]$ with $a\neq b$, the inner product between the two columns $\langle \bB_{a,\cdot} , \bB_{b,\cdot}\rangle$ satisfies
 \beq\label{eq:property_1_B}
 |\langle \bB_{a,\cdot} , \bB_{b,\cdot}\rangle|\leq k/4.
 \eeq
 \item For any two subsets $X$ and $Y$ of $[k]$ satisfying $|X|=|Y|=\eta_0 k$ and $X\cap Y=\emptyset$ and any labelings $\pi_1:[\eta_0  k ]\to X$ and $\pi_{2}:[\eta_0 k ]\to Y$, we have
 \beq\label{eq:property_2_B}
 \sum_{a,b=1}^{ \eta_0 k}[\bB_{\pi_1(a),\pi_1(b)}-\bB_{\pi_2(a),\pi_2(b)} ]^2\geq  \eta_0^2 k^2/2.
 \eeq
\end{itemize}
\end{lemma}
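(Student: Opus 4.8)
I would prove Lemma \ref{lem:existence_hadamard_matrix} by the probabilistic method: let $\bB$ be the random $k\times k$ symmetric matrix whose entries $\bB_{ij}$ with $i\le j$ (including the diagonal) are i.i.d.\ Rademacher random variables, and show that each of the two required properties fails with probability $o(1)$ as $k\to\infty$; a realization meeting both then exists for every sufficiently large $k$ that is a multiple of $16$.

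\textbf{Property \eqref{eq:property_1_B}.} Fix $a\neq b$ and write
$\langle \bB_{a,\cdot},\bB_{b,\cdot}\rangle=\sum_{c\notin\{a,b\}}\bB_{ac}\bB_{bc}+\bB_{ab}(\bB_{aa}+\bB_{bb})$.
Because $a,b$ are distinct and $\bB$ is symmetric, the coordinates $\{\bB_{ac},\bB_{bc}:c\notin\{a,b\}\}$ are $2(k-2)$ distinct (hence independent) entries of $\bB$, so the products $\bB_{ac}\bB_{bc}$, $c\notin\{a,b\}$, are i.i.d.\ Rademacher, while the remaining term has absolute value at most $2$. Hoeffding's inequality gives $\P\big(|\langle \bB_{a,\cdot},\bB_{b,\cdot}\rangle|\ge k/4\big)\le 2\exp\big(-(k/4-2)^2/(2(k-2))\big)\le 2e^{-ck}$ for an absolute $c>0$ and $k$ large; a union bound over the $\binom{k}{2}$ pairs shows \eqref{eq:property_1_B} holds for all $a\neq b$ with probability at least $1-k^2e^{-ck}\to1$.

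\textbf{Property \eqref{eq:property_2_B}.} Since the entries are $\pm1$, $[\bB_{\alpha\beta}-\bB_{\gamma\delta}]^2=4\,\1\{\bB_{\alpha\beta}\neq\bB_{\gamma\delta}\}$, so the left-hand side of \eqref{eq:property_2_B} equals $4N$, where (with $m=\eta_0 k$) $N=\#\{(a,b)\in[m]^2:\bB_{\pi_1(a)\pi_1(b)}\neq\bB_{\pi_2(a)\pi_2(b)}\}$; it suffices to prove $N\ge m^2/8$ for all disjoint $X,Y$ and all labelings. Fix such $X,Y,\pi_1,\pi_2$. Because $X\cap Y=\emptyset$ and the $\pi_i$ are injective, the entries $\{\bB_{\pi_1(a)\pi_1(b)}:a\le b\}\cup\{\bB_{\pi_2(a)\pi_2(b)}:a\le b\}$ are pairwise distinct coordinates of $\bB$, hence independent; thus the indicators $\1\{\bB_{\pi_1(a)\pi_1(b)}\neq\bB_{\pi_2(a)\pi_2(b)}\}$ over unordered pairs $\{a,b\}$ (diagonal included) are i.i.d.\ $\mathrm{Bin}(1,1/2)$. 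Writing $N=2T+U$ with $T\sim\mathrm{Bin}(\binom{m}{2},1/2)$ and $U\sim\mathrm{Bin}(m,1/2)$ independent, we have $\E N=m^2/2$; dropping the nonnegative $U$ and applying a Chernoff bound to $T$ gives $\P(N<m^2/8)\le\P\big(T<m^2/16\big)\le e^{-c'm^2}$ for an absolute $c'>0$ and $m$ large. A union bound over the at most $k^{2m}$ pairs of ordered $m$-tuples of distinct indices yields $\P(\text{\eqref{eq:property_2_B} fails})\le k^{2m}e^{-c'm^2}=\exp\big(m(2\ln k-c'm)\big)$, which tends to $0$ because $m=\eta_0 k$ is linear in $k$.

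\textbf{Conclusion and main obstacle.} Combining the two estimates, for all sufficiently large $k$ divisible by $16$ the random matrix $\bB$ satisfies both \eqref{eq:property_1_B} and \eqref{eq:property_2_B} with positive probability, proving the lemma. The step needing the most care is the union bound for \eqref{eq:property_2_B}: the number of admissible labelings is $e^{O(k\log k)}$, and it is absorbed only because the deviation probability decays like $e^{-\Theta((\eta_0 k)^2)}$, i.e.\ \emph{quadratically} in $k$; the quadratic rate in turn relies on the independence of all the relevant entries of $\bB$, which is exactly where the disjointness hypothesis $X\cap Y=\emptyset$ enters.
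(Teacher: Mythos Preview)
Your proposal is correct and follows essentially the same probabilistic-method argument as the paper: a random symmetric Rademacher matrix, Hoeffding's inequality plus a union bound over pairs for \eqref{eq:property_1_B}, and for \eqref{eq:property_2_B} the observation that the off-diagonal indicators are i.i.d.\ Bernoulli$(1/2)$ (using $X\cap Y=\emptyset$), a Chernoff/Hoeffding lower-tail bound giving decay $e^{-\Theta(\eta_0^2 k^2)}$, and a union bound over $\leq k^{2\eta_0 k}$ labelings. Your treatment of \eqref{eq:property_1_B} is in fact slightly more careful than the paper's, since you isolate the dependent contribution $\bB_{ab}(\bB_{aa}+\bB_{bb})$ coming from symmetry rather than asserting that the inner product is an exact sum of $k$ independent Rademachers.
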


Note that any Hadamard matrix satisfies condition \eqref{eq:property_1_B} since its columns are orthogonal. Unfortunately, the second condition \eqref{eq:property_2_B} seems difficult to check for such matrices. This is why we adopt a probabilistic approach in the proof of this lemma showing that, with positive probability, a symmetric matrix with independent Rademacher entries satisfies the above conditions.

The graphon ``hypotheses'' that we consider in this proof are generated by the matrix of connection probabilities 
 $\bQ : = (\bJ + \bB)/2$ where $\bB$ is a matrix from Lemma \ref{lem:existence_hadamard_matrix} and $\bJ$ is a $k\times k$ matrix with all entries equal to 1.

Fix some $\epsilon < 1/(4k)$. Denote by $\cC_0$ the collection of vectors $u\in \{-\epsilon,\epsilon\}^k$ satisfying $\sum_{a=1}^k u_a=0$. For any $u\in \cC_0$,  define the cumulative distribution $F_u$ on $\{0,\ldots,  k\}$ by the relations $F_u(0)=0$ and   $F_u(a)= a/k + \sum_{b=1}^a u_b$ for $a\in [k]$. Then, set $\Pi_{ab}(u)=[F_{u}(a-1),F_{u}(a))\times [F_{u}(b-1),F_{u}(b))$ and define the graphon $W_u\in \cW[k]$ by
 \begin{equation*}
 W_u(x,y)=\sum_{(a,b)\in [k]\times [k]}{\bQ}_{ab}\mathds{1}_{\Pi_{ab}(u)}(x,y).
 \end{equation*}
The graphon $W_u$ 
is slightly unbalanced as the weight of each class is either $1/k-\epsilon$ or $1/k+\epsilon$.  

Let $\P_{W_u}$ denote the distribution of observations $\bA':=(\bA_{ij}, 1\le j<i\le n)$ sampled according to the sparse graphon model \eqref{sparse_grapnon_mod} with $W_0=W_u$. Since the matrix $\bQ$ is fixed the difficulty in distinguishing between the distributions $\P_{W_u}$ and $\P_{W_v}$ for $u\ne v$ comes from the randomness of the design points $\xi_1,\ldots,\xi_n$ in the graphon model \eqref{sparse_grapnon_mod} rather than from the randomness of the realization of $\bA'$ conditionally on $\xi_1,\ldots,\xi_n$. The following lemma gives a bound on the Kullback-Leibler  divergences $\mathcal{K}(\P_{W_u},\P_{W_v})$
between  $\P_{W_u}$ and $\P_{W_v}$.

\begin{lemma}\label{lem:kullback}
 For all $u,v\in \cC_0$ we have
 $$
\mathcal{K}(\P_{ W_u},\P_{ W_v})\le 16n k^2 \epsilon^2/3.
 $$
\end{lemma}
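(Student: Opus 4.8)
The plan is to turn the problem into a comparison between the mixing distributions of two mixtures that share the \emph{same} Markov kernel, and then invoke the data-processing inequality for the Kullback--Leibler divergence. The point is that in this construction the connection probability matrix $\bQ=(\bJ+\bB)/2$ is common to all hypotheses $W_u$; only the thresholds $F_u(0),\dots,F_u(k)$ differ, and these affect just the law of the (unobserved) community memberships of the $n$ nodes, not the conditional law of $\bA'$ given those memberships.

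First I would make the mixture structure of $\P_{W_u}$ explicit. For $u\in\cC_0$ put $\phi_u(x)=a$ whenever $x\in[F_u(a-1),F_u(a))$, so that $W_u(x,y)=\bQ_{\phi_u(x),\phi_u(y)}$, and set $z_i=\phi_u(\xi_i)$. Since $\xi_1,\dots,\xi_n$ are i.i.d. uniform on $[0,1]$, the label vector $\bz=(z_1,\dots,z_n)$ has the product law $\mu_u^{\otimes n}$, where $\mu_u$ is the distribution on $[k]$ with weights $\lambda_a(u):=F_u(a)-F_u(a-1)=1/k+u_a$ (these are positive and sum to one because $\epsilon<1/(4k)$ and $u\in\cC_0$). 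Conditionally on $\bz$, the entries $\bA_{ij}$, $j<i$, are independent $\mathrm{Bernoulli}(\rho_n\bQ_{z_iz_j})$ random variables; this conditional law, call it $K(\cdot\mid\bz)$, does not depend on $u$. Hence $\P_{W_u}$ is the image of $\mu_u^{\otimes n}$ under the fixed Markov kernel $K$.

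By the data-processing inequality applied to $K$, followed by tensorization over the $n$ coordinates,
\[
\mathcal{K}(\P_{W_u},\P_{W_v})\le\mathcal{K}\big(\mu_u^{\otimes n},\mu_v^{\otimes n}\big)=n\,\mathcal{K}(\mu_u,\mu_v).
\]
It then remains to estimate $\mathcal{K}(\mu_u,\mu_v)=\sum_{a=1}^k\lambda_a(u)\log\frac{\lambda_a(u)}{\lambda_a(v)}$. Using $\log(1+t)\le t$ together with $\sum_a(\lambda_a(u)-\lambda_a(v))=0$ yields the $\chi^2$-type bound $\mathcal{K}(\mu_u,\mu_v)\le\sum_{a=1}^k\frac{(\lambda_a(u)-\lambda_a(v))^2}{\lambda_a(v)}=\sum_{a=1}^k\frac{(u_a-v_a)^2}{1/k+v_a}$. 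Since $|v_a|\le\epsilon<1/(4k)$ we have $1/k+v_a>3/(4k)$, and since $u_a,v_a\in\{-\epsilon,\epsilon\}$ we have $(u_a-v_a)^2\le4\epsilon^2$, so $\mathcal{K}(\mu_u,\mu_v)\le\frac{4k}{3}\cdot 4k\epsilon^2=\frac{16k^2\epsilon^2}{3}$. Multiplying by $n$ gives the asserted bound.

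I expect the only delicate step to be the reduction in the second paragraph: one must verify carefully that conditioning on the hidden labels $\bz$ removes all dependence on $u$ from the conditional law of $\bA'$, so that $\P_{W_u}$ genuinely factors as a mixture over $\bz$ with a $u$-free kernel $K$. Once this is in place, everything else is the textbook $\chi^2$ upper bound on the KL divergence between two nearby categorical laws together with the elementary estimates $|u_a-v_a|\le 2\epsilon$ and $\epsilon<1/(4k)$.
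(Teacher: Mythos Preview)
Your proof is correct and is essentially the same as the paper's. The paper also represents $\P_{W_u}$ as a mixture over the label vector with a $u$-independent kernel, then uses joint convexity of $(x,y)\mapsto x\log(x/y)$ (which is precisely your data-processing inequality) to reduce to $n\,\mathcal{K}(\mu_u,\mu_v)$, and finishes with the same $\chi^2$ bound and the same numerical estimates $|u_a-v_a|\le 2\epsilon$, $1/k+v_a\ge 3/(4k)$.
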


 Next, we need the following combinatorial result in the spirit of the Varshamov-Gilbert lemma~\cite[Chapter 2]{tsybakov_book}. It shows that there exists a large subset of $\cC_0$ composed of vectors $u$ that are well separated in the Hamming distance.
We state this result in terms of the sets $\cA_u:=\{a\in [k]:\ u_a=\epsilon\}$ where $u\in \cC_0$. Notice that, by definition of $\cC_0$, we have~$|\cA_u|=k/2$ for all $u\in \cC_0$. 

\begin{lemma}\label{lem:varshamov_variation}
 There exists a subset $\cC$ of $\cC_0$ such that $\log |\cC|\geq k/16$ and 
 \beq
 |\cA_u\Delta \cA_v|> k/4\ 
 \eeq
 for any $u\neq v\in \cC$.
\end{lemma}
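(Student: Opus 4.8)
The plan is to reduce the statement to a standard packing argument for constant-weight binary codes. Since every $u\in\cC_0$ satisfies $|\cA_u|=k/2$, the map $u\mapsto\cA_u$ is a bijection between $\cC_0$ and the family $\cF$ of all $(k/2)$-element subsets of $[k]$. For $u,v\in\cC_0$ one has $|\cA_u\Delta\cA_v|=|\cA_u|+|\cA_v|-2|\cA_u\cap\cA_v|=k-2|\cA_u\cap\cA_v|$, so the required separation $|\cA_u\Delta\cA_v|>k/4$ is equivalent to $|\cA_u\cap\cA_v|<3k/8$. Thus it suffices to produce a subfamily $\cM\subseteq\cF$ with $\log|\cM|\ge k/16$ whose members pairwise intersect in fewer than $3k/8$ elements, and then set $\cC:=\{u\in\cC_0:\ \cA_u\in\cM\}$.

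First I would take $\cM\subseteq\cF$ to be a \emph{maximal} family such that any two distinct members have symmetric difference strictly larger than $k/4$. By maximality, every element of $\cF$ lies within symmetric-difference distance $k/4$ of some member of $\cM$; in other words the closed balls $B(\cA):=\{\cB\in\cF:\ |\cA\Delta\cB|\le k/4\}$, $\cA\in\cM$, cover $\cF$. Since on $\cF$ we have $|\cA\Delta\cB|=2i$ where $i=k/2-|\cA\cap\cB|$ is the number of elements of $\cB$ lying outside $\cA$, counting the ways to pick those $i$ elements and the matching $k/2-i$ elements of $\cA$ gives, for any fixed $\cA$, $|B(\cA)|=\sum_{i=0}^{k/8}\binom{k/2}{i}^2=:N$ (here $k/8$ is an integer because $16\mid k$). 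The covering bound then yields $|\cM|\ge|\cF|/N=\binom{k}{k/2}/N$.

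It remains to check that $\binom{k}{k/2}/N\ge e^{k/16}$ once $k$ is a large enough multiple of $16$. I would use $\binom{k}{k/2}\ge 2^k/(k+1)$ together with $N\le (k/8+1)\binom{k/2}{k/8}^2\le(k+1)\,2^{kH(1/4)}$, where $H(\cdot)$ is the binary entropy function (in bits) and $H(1/4)=\tfrac34\log_2\tfrac43+\tfrac12<0.82$, via the standard estimate $\binom{m}{\beta m}\le 2^{mH(\beta)}$ (the maximal summand in $N$ is $\binom{k/2}{k/8}^2$ since $k/8<(k/2)/2$). Hence $\binom{k}{k/2}/N\ge 2^{(1-H(1/4))k}/(k+1)^2\ge 2^{0.18k}/(k+1)^2$, which exceeds $e^{k/16}=2^{k/(16\ln 2)}$ for $k$ large enough. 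Translating back through $u\mapsto\cA_u$ gives the desired $\cC\subseteq\cC_0$ with $\log|\cC|\ge k/16$ and $|\cA_u\Delta\cA_v|>k/4$ for all $u\ne v$ in $\cC$.

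The only mildly delicate point is numerical, namely verifying $1-H(1/4)>1/(16\ln 2)$ (roughly $0.188>0.091$), so that the exponential gain beats both the polynomial factor $(k+1)^2$ and the target $e^{k/16}$; the margin is comfortable. An entirely analogous, slightly longer, route via a direct Varshamov--Gilbert computation (choosing subsets at random and bounding by a union bound the probability of being too close to a previously chosen one) would work as well.
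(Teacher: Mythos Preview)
Your argument is correct and follows the same overall strategy as the paper: take a maximal $k/4$-separated family in the constant-weight space $\cF=\{A\subset[k]:|A|=k/2\}$, then invoke a covering/volume bound. The difference is only in how the ball volume $|B(\cA)|$ is controlled. You count directly, writing $|B(\cA)|=\sum_{i\le k/8}\binom{k/2}{i}^2$ and bounding via the entropy inequality $\binom{m}{\beta m}\le 2^{mH(\beta)}$; the paper instead interprets $|B(\cA)|/|\cF|$ as the upper tail $\P[S_k\ge 3k/8]$ of a hypergeometric random variable, then bounds it by $e^{-k/16}$ through Hoeffding's inequality (using Aldous' observation that $S_k$ is a conditional expectation of a $\mathrm{Bin}(k/2,1/2)$ variable, so its moment generating function is dominated by the binomial one). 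The paper's route yields $|\cC|\ge e^{k/16}$ cleanly for every even $k$, without the polynomial slack $(k+1)^2$ and hence without the ``for $k$ large enough'' caveat your entropy bound needs; conversely, your route is more elementary, avoiding the martingale/convexity trick for hypergeometric tails. In the present context (where $k$ is already assumed to be a large multiple of $16$) either approach is perfectly adequate.
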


Lemmas~\ref{lem:varshamov_variation} and~\ref{lem:existence_hadamard_matrix} are used to obtain the following  lower bound on the distance $\delta(W_u,W_v)$ between two distinct graphons in $\cC$. This lemma is the main ingredient of the proof.

\begin{lemma}\label{lem:separated}
For all $u,v\in \cC$ such that $u\ne v$, the graphons $W_u$ and $W_v$ are well separated in the $\delta(\cdot,\cdot)$ distance:
\[\delta^2(W_u,W_v)\geq \eta_0^2 k\epsilon/2,
\]
so that
 \beq\label{separ}
 \delta^2(\rho_n W_u,\rho_n W_v)\geq \rho_n^2\eta_0^2 k\epsilon/2, \quad \forall \  u,v \in \cC: u\ne v.
  \eeq 
\end{lemma}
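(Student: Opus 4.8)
The plan is to rewrite $\delta^2(W_u,W_v)$ as a linear program over ``coupling'' matrices and then to combine the two properties of $\bB$ from Lemma~\ref{lem:existence_hadamard_matrix} with the Hamming separation $|\cA_u\Delta\cA_v|>k/4$ of Lemma~\ref{lem:varshamov_variation}. Write $I_a^u:=[F_u(a-1),F_u(a))$ for the $a$th block of $W_u$, so $\lambda(I_a^u)=1/k+u_a$, and likewise for $W_v$; recall $\eta_0=1/16$ and $\epsilon<1/(4k)$, so $k\epsilon<1/4$. (We are in the branch of the proof of \eqref{eq:lower1} where $k$ is a large multiple of $16$; the value $k=2$ is treated separately, without this lemma, since then $W_u,W_v$ are $\{0,1\}$-valued step functions whose zero sets have measures $(1/2-\epsilon)^2$ and $(1/2+\epsilon)^2$, so for every $\tau$ the pulled-back graphons disagree on a set of measure at least $2\epsilon$.) For any measure-preserving bijection $\tau$ set $\pi_{a,a'}:=\lambda(I_a^v\cap\tau^{-1}(I_{a'}^u))$; since $W_u,W_v$ share the block matrix $\bQ=(\bJ+\bB)/2$ and $\bQ_{ab}-\bQ_{a'b'}=(\bB_{ab}-\bB_{a'b'})/2$, splitting $[0,1]^2$ by the pair of block labels gives
\[
\int_{[0,1]^2}\!\big|W_u(\tau(x),\tau(y))-W_v(x,y)\big|^2\,dx\,dy=\tfrac14\sum_{a,b,a',b'}\pi_{a,a'}\pi_{b,b'}(\bB_{ab}-\bB_{a'b'})^2=:\tfrac14\,S(\pi),
\]
where $\pi$ is a coupling of the weight vectors $s=(1/k+v_a)_a$ and $t=(1/k+u_{a'})_{a'}$. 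Hence $\delta^2(W_u,W_v)\ge\tfrac14\inf_\pi S(\pi)$ over all such couplings, and it is enough to prove $S(\pi)\ge 2\eta_0^2k\epsilon$ for every coupling $\pi$.

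Next, the mismatch of the marginals forces a displacement of mass. Since $|\cA_u|=|\cA_v|=k/2$ and $|\cA_u\Delta\cA_v|>k/4$, the disjoint sets $D:=\cA_v\setminus\cA_u$ and $D':=\cA_u\setminus\cA_v$ satisfy $|D|=|D'|=:m>k/8\ge\eta_0 k$; fix $X\subseteq D$ and $Y\subseteq D'$ with $|X|=|Y|=\eta_0 k$, so that $X\cap Y=\emptyset$, $X\cap\cA_u=\emptyset$, $Y\cap\cA_v=\emptyset$, and in particular $s_a=1/k+\epsilon$ for $a\in X$ whereas $t_{a'}=1/k-\epsilon$ for $a'\in X$. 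From $\pi_{a,a}\le\min(s_a,t_a)$ and summation over $a\in D$ we get $\sum_a\pi_{a,a}\le 1-2m\epsilon$, so $\pi$ has off-diagonal mass at least $2m\epsilon\ge k\epsilon/4$; moreover the row-mass carried by $X$ is $\eta_0 k(1/k+\epsilon)$ while the column-capacity inside $X$ is only $\eta_0 k(1/k-\epsilon)$, so at least $2\eta_0 k\epsilon$ units of mass are transported from rows in $X$ to columns in $X^c$.

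Finally, this displaced mass is expensive, and here the properties of $\bB$ enter through a dichotomy. If $\pi$ is essentially supported on a scaled permutation $\rho$, then $\pi_{a,\rho(a)}=s_a=t_{\rho(a)}$ forces $\rho(\cA_v)=\cA_u$, so $X$ and $\rho(X)$ are disjoint $\eta_0 k$-sets; applying \eqref{eq:property_2_B} to the corresponding sub-blocks of $\bB$ (with $\pi_1$ enumerating $X$ and $\pi_2=\rho\circ\pi_1$) and using $s_a=1/k+\epsilon$ on $X$ yields $S(\pi)\ge(1/k+\epsilon)^2\,\eta_0^2k^2/2>\eta_0^2/2$. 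If instead $\pi$ has a substantial off-diagonal part, then for an off-diagonal entry $\pi_{a,a'}$ the factor $\sum_{b,b'}\pi_{b,b'}(\bB_{ab}-\bB_{a'b'})^2$ is bounded below using that two distinct rows of $\bB$ disagree on at least $3k/8$ coordinates by \eqref{eq:property_1_B} (and again \eqref{eq:property_2_B} to prevent that disagreement from being re-routed away), so that the $\ge 2\eta_0 k\epsilon$ displaced mass contributes at least a constant multiple of $k\epsilon$ to $S(\pi)$. In both regimes $S(\pi)\ge 2\eta_0^2k\epsilon$ (comfortably, because $k\epsilon<1/4$), hence $\delta^2(W_u,W_v)\ge\eta_0^2k\epsilon/2$; multiplying through by $\rho_n^2$, since $\delta$ is homogeneous of degree one in the graphon, gives \eqref{separ}.

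The main obstacle is the last step. The distance $\delta$ is an infimum over \emph{all} measure-preserving bijections, which may cut the $k$ blocks into arbitrarily fine pieces and reassemble them, so one cannot argue with permutations of the blocks alone; the ``for any labelings $\pi_1,\pi_2$'' clause of \eqref{eq:property_2_B} is precisely the device that defeats such reassemblies. Transferring it, together with the row-separation \eqref{eq:property_1_B}, into the quadratic functional $S(\pi)$ (rather than into a plain sum over a permutation) is the technical heart of the proof: it forces the dichotomy above, a delicate analysis of the ``spread-out'' couplings, and a careful bookkeeping of constants so that both branches clear the threshold $2\eta_0^2 k\epsilon$.
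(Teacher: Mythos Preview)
Your setup is correct and matches the paper: the coupling representation
\[
\int|W_u(\tau(x),\tau(y))-W_v(x,y)|^2\,dx\,dy=\tfrac14\sum_{a,b,a',b'}\pi_{a,a'}\pi_{b,b'}(\bB_{ab}-\bB_{a'b'})^2
\]
is the right starting point, and the ingredients you name (properties \eqref{eq:property_1_B}, \eqref{eq:property_2_B}, and the Hamming separation from Lemma~\ref{lem:varshamov_variation}) are exactly those the paper uses. But your dichotomy has a genuine gap in the second branch, which you essentially acknowledge in your final paragraph.

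The problem is the claim that, for a pair $(a,a')$ lying off the near-permutation support of $\pi$, the quantity $\sum_{b,b'}\pi_{b,b'}(\bB_{ab}-\bB_{a'b'})^2$ is bounded below by a positive constant. This cannot be extracted from \eqref{eq:property_1_B} alone in the fully-expanded bilinear form, because the coupling $\pi$ can route mass to pairs $(b,b')$ with $\bB_{ab}=\bB_{a'b'}$; for fixed $a\ne a'$ the ``agreement set'' $\{(b,b'):\bB_{ab}=\bB_{a'b'}\}$ has row-sections of size roughly $k/2$, so there is no marginal obstruction to concentrating $\pi$ there. Invoking \eqref{eq:property_2_B} ``to prevent that disagreement from being re-routed away'' is not a proof: \eqref{eq:property_2_B} compares two \emph{bijections} on disjoint $\eta_0k$-sets, and you have not explained how to reduce a spread-out coupling to such a comparison. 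Your dichotomy also presupposes a canonical permutation against which to measure ``off-diagonal'' mass, but in the spread-out case no such permutation has been defined.

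The paper avoids this by \emph{not} fully expanding the bilinear form. It keeps one integration and writes the cost as $\sum_{a,b}\omega_{a,b}\|h_{u,a}-k_{v,b}\|_2^2$, where $h_{u,a}(y)=W_u(F_u(a-1),y)-1/2$ and $k_{v,b}(y)=W_v(F_v(b-1),\tau(y))-1/2$ are $\{\pm 1/2\}$-valued $L_2$ functions. Property~\eqref{eq:property_1_B} then gives $\|k_{v,b_1}-k_{v,b_2}\|_2^2\ge 1/8$ for $b_1\ne b_2$, so for each $a$ there is at most one $b=:\pi(a)$ with $\|h_{u,a}-k_{v,b}\|_2^2<1/32$, and $\pi$ extends to a permutation. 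This defines the permutation \emph{before} the case split, and the split is on $\sum_a(1/k+u_a-\omega_{a,\pi(a)})$: if it is $\ge k\epsilon/16$, the $1/32$-lower bound on off-$\pi$ terms finishes; if it is small, one extracts disjoint $\eta_0k$-sets $A\subset\cA_u$, $B=\pi(A)\subset\cA_v\setminus\cA_u$ with $\omega_{a,\pi(a)}\ge 1/k$ and applies \eqref{eq:property_2_B} to that sub-block, getting $\ge\eta_0^2/8\ge\eta_0^2k\epsilon/2$. The point you are missing is precisely this $L_2$ row-orthogonality step, which manufactures the permutation $\pi$ uniformly over all couplings and makes the ``off-permutation'' branch a one-line computation rather than a delicate analysis of spread-out $\pi$.
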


Now, choose $\epsilon$ such that $\epsilon^2=\frac{3}{(16)^3 nk}$. Then it follows from Lemmas \ref{lem:kullback} and \ref{lem:varshamov_variation} that
 \beq\label{kull}
\mathcal{K}(\P_{ W_u},\P_{W_v})\le \frac1{16}\log |\cC|, \quad \forall \ u, v\in \cC: u\ne v.
 \eeq
In view of Theorem 2.7 in \cite{tsybakov_book}, inequalities \eqref{separ} and \eqref{kull} imply that
\[\inf_{\widehat{f}}\sup_{W_0\in \cW[k]}\E_{W_0}[\delta^2(\widehat{f}, \rho_n W_0)] \geq C \rho_n^2 \sqrt{\frac{k}{n}}\]
where $C>0$ is an absolute constant. This completes the proof for the case when $k$ is a large enough multiple of 16.

\smallskip

Let now $k=2$. Then, we reduce the lower bound to the problem of testing two hypotheses. We consider the matrix $\bB=\left(\begin{array}{cc} 1  &   1 \\ 1 & -1 \end{array}\right)$ and the two corresponding graphons $W_{u_1}$ and $W_{u_2}$ with $u_1=(\epsilon,-\epsilon)$ and $u_2=(-\epsilon,\epsilon)$. Adapting the argument of Lemma \ref{lem:separated}, one can prove that $\delta^2(\rho_n W_{u_1},\rho _nW_{u_2})\geq \rho_n^2\epsilon$. Moreover, exactly as in Lemma \ref{lem:kullback}, the Kullback-Leibler divergence between $\P_{W_{u_1}}$ and $\P_{W_{u_2}}$ is bounded by $Cn\epsilon^2$.  Taking  $\epsilon$ of the order $n^{-1/2}$ and using Theorem 2.2 from \cite{tsybakov_book} we conclude that
\[\inf_{\widehat{f}}\sup_{W_0\in \cW[2]}\E_{W_0}[\delta^2(\widehat{f}, \rho_n W_0)] \geq C \rho_n^2 \sqrt{\frac{1}{n}}\]
where $C>0$ is an absolute constant.

\begin{proof}[Proof of Lemma \ref{lem:existence_hadamard_matrix}]
Let $\bB$ be a $k\times k$ symmetric random matrix such that  its elements $\bB_{a,b}$, $a\leq b\in [k]$, are independent Rademacher random variables. It suffices to prove that $\bB$ satisfies properties \eqref{eq:property_1_B} and \eqref{eq:property_2_B} with positive probability.
Fix $a\neq b$. Then,   $\langle \bB_{a,\cdot} , \bB_{b,\cdot}\rangle$ is distributed as a sum of $k$ independent Rademacher variables. By Hoeffindg's inequality
\[\P\left[|\langle \bB_{a,\cdot} , \bB_{b,\cdot}\rangle|\geq k/4 \right]\leq 2\exp[-k/32].\]
By the union bound, property \eqref{eq:property_1_B} is satisfied for all $a\neq b$ with probability greater than $1- 2k^2\exp[-k/32]$. For $k$ larger than some absolute constant, this probability is greater than $3/4$. 

Fix any two subsets $X$ and $Y$ of $[k]$ such that $|X|=|Y|=\eta_0k$ and $X\cap Y=\emptyset$. Let  $\pi_1$ and $\pi_2$ be any two labelings of $X$ and $Y$. Then, define $T_{\pi_1,\pi_2}:= \sum_{a,b=1}^{ \eta_0 k}[\bB_{\pi_1(a),\pi_1(b)}-\bB_{\pi_2(a),\pi_2(b)} ]^2$. By symmetry of $\bB$, $T_{\pi_1,\pi_2}/8$ is greater than 
$\sum_{a<b}[\bB_{\pi_1(a),\pi_1(b)}-\bB_{\pi_2(a),\pi_2(b)} ]^2/4$ (we have put aside the diagonal terms) where all the summands $[\bB_{\pi_1(a),\pi_1(b)}-\bB_{\pi_2(a),\pi_2(b)} ]^2/4$ are independent Bernoulli random variables with parameter $1/2$ since $\pi_1([\eta_0k])\cap \pi_2([\eta_0k])=\emptyset $. Thus, $T_{\pi_1,\pi_2}/8$ is stochastically greater than a binomial random variable with parameters $\eta_0k(\eta_0k-1)/2$ and $1/2$. Applying again Hoeffding's inequality, we find
\[\P\left[T_{\pi_1,\pi_2}/8\leq \frac{\eta_0^2k^2}{8}- \frac{\eta_0k}{4} \right]\leq \exp\left[-\frac{\eta_0^2k^2}{32}\right].\]
For $k$ large enough we have $\eta_0k/4\leq \eta_0^2k^2/16$ so that $T_{\pi_1,\pi_2}\geq \eta_0^2k^2/2$ with probability greater than $1-\exp(-\eta_0^2k^2/32)$. 
There are less than $k^{2\eta_0k}$ such maps $(\pi_1,\pi_2)$ so that property \eqref{eq:property_2_B} is satisfied with probability greater than $1-\exp(2\eta_0k\log(k) - \eta_0^2k^2/32)$. Again, this probability is greater than $3/4$ for $k$ large enough. Applying once again the union bound we find that properties \eqref{eq:property_1_B} and \eqref{eq:property_2_B} are satisfied with probability greater than $1/2$.

\end{proof}

\begin{proof}[Proof of Lemma \ref{lem:varshamov_variation}]
 
Let $\cC$ be a maximal subset $\cC_0$ of points $u$ such that the corresponding $\cA_u$ are $k/4$-separated with respect to the measure of symmetric difference distance. By maximality of $\cC$, the union  of all balls in the Hamming distance centered at $u\in \cC$ with radii $k/4$ covers $\cC_0$. Denoting by $\cB[u,k/4]$ such a ball, we obtain by a volumetric argument
\beq\label{eq:volumetric}
|\cB[u,k/4]|/ |\cC_0| \geq   |\cC|^{-1}\ .
\eeq
If  we endow $\cC_0$ with the uniform probability, then $\left |\cB[u,k/4]\right ||\cC_0|^{-1}$ is the probability to draw a point from $\cB[u,k/4]$. Note that a point $v$ is in $\cB[u,k/4]$ if $\left \vert A_v\setminus A_u\right \vert\leq k/8$. One can construct the set $A_v$ doing $k/2$ sampling without replacement from the set $[k]$. We call an $a\in [k]$ a success if $a\in A_u$. There are $k/2$ of them. Let $S_k$ denotes the number of success, then   $S_k$ follows an hypergeometric distribution with parameters $(k, k/2,k/2)$ and we have 
 \[\frac{|\cB[u,k/4]|}{|\cC_0|}=  \P[S_k \geq 3k/8 ] \ . \]
 Now we only have to bound the deviations of $S_k$. It follows from \cite[p.173]{aldous85} that $S_k$ has the same distribution as the random variable $\E[\eta|\cB]$ where $\eta$ is a binomial random variable with parameters $(k/2,1/2)$ and $\cB$ is some suitable $\sigma$-algebra. Thus, by a convexity argument, we obtain that, for any $\lambda>0$,
 \[\E e^{\lambda (S_k-k/4)}\leq \E e^{\lambda (\eta-k/4)}\leq e^{\lambda^2 k/16}\ ,  \]
 where the last bound is due to Hoeffding's inequality. Applying the exponential Markov inequality and choosing $\lambda=1$, we obtain
 \[P\left[S_k-\frac{k}{4}\geq k/8 \right]\leq \exp\left[-k/16\right]\ \]
 and, in view of \eqref{eq:volumetric}, we conclude that  $|\cC|\geq \exp[k/16]$.
\end{proof}

\begin{proof}[Proof of Lemma \ref{lem:separated}]

Let $u$ and  $v$ be two different vectors in $\cC$ and let $\tau$ be a measure preserving bijection $[0,1]\mapsto [0,1]$.  We aim to prove that, {for any $\tau$}, 
\beq\label{eq:lower_l2_graphon}
\int \vert W_u(x,y) - W_v(\tau(x),\tau(y))\vert ^2dx dy \geq \eta_0^2k\epsilon/2 \ , 
\eeq
where we recall that $\eta_0=1/16$.
\medskip

If $x$ and {$x'$} correspond to two different classes of $W_u$, that is $x\in [F_{u}(a-1),F_{u}(a))$ and $x'\in [F_{u}(b-1),F_{u}(b))$ for some $a\neq b$, then the inner product between $W_u(x,\cdot)$ and  $W_u(x',\cdot)$ satisfies 
\begin{eqnarray}
\Big|\int (W_u(x,y)-1/2)(W_u(x',y)-1/2)dy\Big|&= &  {\Big|}\frac{1}{4}\sum_{c=1}^k \left(\frac{1}{k}+{u_c}\right)\bB_{ac}\bB_{bc} {\Big|}\nonumber \\
& \leq & \frac{1}{4k} \langle \bB_{a,.},\bB_{b,.}\rangle + \frac{1}{4}k\epsilon\nonumber\\
&\leq& 1/8\label{eq:upper_inner_graphon} \ , 
\end{eqnarray}
since we assume that $4k\epsilon\leq 1$.

For any $a,b\in [k]$, define $\omega_{ab}$ the  Lebesgue measure of $[F_{u}(a-1), F_u(a)]\cap \tau([F_v(b-1),F_v(b)])$. Since $\tau$ is measure preserving, $\sum_b \omega_{ab}= 1/k+u_a$ and $\sum_a \omega_{ab}= 1/k+v_b$. For any $a$ and $b\in[k]$ define $h_{u,a}(y)= W_u(F_u(a-1),y)-1/2$ and  $k_{v,b}(y)= W_v(F_v(b-1),\tau(y))-1/2$. Then, we have
 \beqn
\int \vert W_u(x,y) - W_v(\tau (x),\tau (y))\vert ^2dx dy = \sum_{a=1}^k \sum_{b=1}^k \omega_{a,b}\int \vert h_{u,a}(y) - k_{v,b}(y)\vert^2dy. 
 \eeqn
 Let $\langle \cdot,\cdot \rangle_2$ and $\|\cdot\|_2$ denote the standard inner product and the Euclidean norm in $L_2([0,1])$. By definition of $W_{u}$ we have that $|k_{v,a}(y)|=1/2$   for all $y\in[0,1]$ and any $a\in[k]$ which implies $\|k_{v,a}\|_2=1/2$. Now for
$b_1\neq b_2$, $\|k_{v,b_1}-k_{v,b_2}\|_2^2\geq  1/2 - 1/4=1/8$ by \eqref{eq:upper_inner_graphon}. By the triangle inequality, this implies that  
\[\|h_{u,a}-k_{v,b_1}\|_2^2 + \|h_{u,a}-k_{v,b_2}\|_2^2\geq \frac{\|k_{v,b_1}-k_{v,b_2}\|_2^2}{2}\geq   1/16\ .  \] 
As a consequence, for any $a\in [k]$ there exists at most one $b\in [k]$ such that $\|h_{u,a}- k_{v,b}\|^2< 1/32$. If such an index $b$ exists, it is denoted by $\pi(a)$. 
Exactly the same argument shows that for any $b\in [k]$ there exists at most one $a\in [k]$ such that $\|h_{u,a}- k_{v,b}\|^2< 1/32$ which implies that
there exists no $a\neq a'$ such  that $\pi(a)= \pi(a')$. Thus, it is possible to extend $\pi$ to a permutation of $[k]$. We get
\beqn
\int \vert W_u(x,y) - W_v(\tau(x),\tau(y))\vert ^2dx dy &\geq &\frac{1}{32}\sum_{a=1}^k \sum_{b\neq \pi(a)} \omega_{a,b}= \frac{1}{32}\sum_{a=1}^k\left(1/k+ u_a -\omega_{a,\pi(a)}  \right). 
\eeqn
If the sum $\sum_{a=1}^k1/k+ u_a -\omega_{a,\pi(a)}$ is greater than $k\epsilon/16$, then \eqref{eq:lower_l2_graphon} is satisfied. Thus, we can assume in the sequel that  $\sum_{a=1}^k1/k+ u_a -\omega_{a,\pi(a)}\leq k\epsilon/16$.  Using that $\omega_{a,b}\leq (1/k+u_a)\wedge (1/k+v_b)$ and the cardinality of the collection $\{a\in[k]:\, u_a>0\}$ is $k/2$ we deduce  that the  collection $\{a\in[k]:\, u_a>0,\ v_{\pi(a)}>0\text{ and } \omega_{a,\pi(a)}\geq 1/k\}$ has cardinality greater than $7k/16$. Since for $u\neq v\in \cC$,  $|\cA_u\cap \cA_v|\leq 3k/8$ (Lemma \ref{lem:varshamov_variation}), there exist subsets $A\subset \cA_{u}$ and $B\subset \cA_{v}$ of cardinality $\eta_0k$ (recall that $\eta_0=1/16)$ such that $\pi(A)=B$, $A\cap B=\emptyset$, and $\omega_{a,\pi(a)}\geq 1/k$ for all $a\in A$. Hence, 
\beqn
\lefteqn{\int \vert W_u(x,y) - W_v(\tau(x),\tau(y))\vert ^2dx dy} &&\\&\geq & \sum_{a_1\in A}\sum_{a_2\in A}\int_{[F_u(a_1-1),F_u(a_1))\times [F_u(a_2-1 ),F_u(a_2))}\vert W_{u}(x,y) -W_v(\tau(x),\tau(y)\vert^2 dxdy
\\ 
&\geq & \sum_{a_1\in A}\sum_{a_2\in A} \omega_{a_1,\pi(a_1)}\omega_{a_2,\pi(a_2)} [\bQ_{a_1,a_2} - \bQ_{\pi(a_1),\pi(a_2)}]^2\\
&\geq &  \frac{1}{4k^2} \sum_{a_1\in A}\sum_{a_2\in A}  [\bB_{a_1,a_2} - \bB_{\pi(a_1),\pi(a_2)}]^2,
\eeqn 
where the last inequality follows from the facts that $\bQ=(\bJ+ \bB)/2$ and $\omega_{a,\pi(a)}\geq 1/k$. Finally, we apply the property \eqref{eq:property_2_B} of $\bB$ to conclude that 
\[\int \vert W_u(x,y) - W_v(\tau(x),\tau(y))\vert ^2dx dy \geq \eta_0^2/8\ge\eta_0^2k\epsilon/2. \]
This proves \eqref{eq:lower_l2_graphon} and thus the lemma.

\end{proof}

 \begin{proof}[Proof of Lemma \ref{lem:kullback}]
 For $u\in \cC_0$, let $\zeta(u)=(\zeta_1(u),\ldots ,\zeta_n(u))$ be the vector of $n$ i.i.d. random variables with the discrete distribution on $[k]$ defined by $\P[\zeta_1(u)=a]= 1/k + u_a$ for any $a\in [k]$. Let $\bTheta_0$ be the $n\times n$ symmetric matrix with elements $(\bTheta_0)_{ii}=0$ and $(\bTheta_0)_{ij}=\rho_n \bQ_{\zeta_i(u),\zeta_j(u)}$ for $i\neq j$. Assume that, conditionally on $\zeta(u)$, the adjacency matrix $\bA$ is sampled according to the network sequence model with such probability matrix $\bTheta_0$. Notice that in this case the observations $\bA'=(\bA_{ij}, 1\le j<i\le n)$ have the probability distribution $\P_{W_u}$.  
 Using this remark and introducing the probabilities $\alpha_{\boldsymbol{a}} (u) = \P[\zeta(u)=\boldsymbol{a}] $ and $p_{A\boldsymbol{a}}=\P[\bA'=A\vert \zeta(u)=\boldsymbol{a}]$
for $\boldsymbol{a}\in [k]^n$, we can write the Kullback-Leibler divergence between $\P_{W_u}$ and $\P_{W_v}$ in the form
$$
\mathcal{K}(\P_{ W_u},\P_{ W_v})=\sum_{A}\sum_{\boldsymbol{a}}p_{A\boldsymbol{a}}\alpha_{\boldsymbol{a}} (u) \log\left(\frac{\sum_{\boldsymbol{a}}p_{A\boldsymbol{a}}\alpha_{\boldsymbol{a}} (u)}{\sum_{\boldsymbol{a}}p_{A\boldsymbol{a}}\alpha_{\boldsymbol{a}} (v)}\right)
$$
where the sums in $\boldsymbol{a}$ are over $[k]^n$ and the sum in $A$ is over all triangular upper halves of matrices in $ \{0,1\}^{n\times n }$.   Since 
the function $(x,y) \mapsto x\log(x/y)$ is convex we can apply Jensen's inequality to get
\begin{equation}\label{kullb_1}
\mathcal{K}(\P_{ W_u},\P_{ W_v}) \le \sum_{\boldsymbol{a}}\alpha_{\boldsymbol{a}} (u) \log\left(\frac{\alpha_{\boldsymbol{a}} (u)}{\alpha_{\boldsymbol{a}} (v)}\right)=
n\sum_{a\in[k]} (1/k + u_a)\log\left(\frac{1/k+u_a}{1/k+v_a}\right)
 \end{equation}
where the last equality follows from the fact that $\alpha_{\boldsymbol{a}} (u)$ are $n$-product probabilities. 
Since the Kullback-Leibler divergence does not exceed the chi-square divergence we obtain  
   $$
\sum_{a\in[k]} (1/k + u_a)\log\left(\frac{1/k+u_a}{1/k+v_a}\right) \le
\sum_{a\in[k]} \frac{(u_a-v_a)^2}{1/k+v_a}\le 16k^2\epsilon^2/3,
 $$
where last inequality uses that $|v_a|\le \epsilon \leq 1/(4k)$, and $|u_a-v_a|\le 2\epsilon$. Combining this with \eqref{kullb_1} proves the lemma. 
 \end{proof}

 \subsubsection{Proof of \eqref{eq:lower2}}
 As in the proof of Proposition  \ref{lower_sparse_sbm}, we use here Fano's method. The main difference is that the graphon separation in $\delta(\cdot,\cdot)$ distance is more difficult to handle than the matrix separation in the Frobenius distance.

First, note that it is sufficient to prove  \eqref{eq:lower2} for $k> k_0$ where $k_0$ is any fixed integer. Indeed, if $k\le k_0$,  the lower bound \eqref{eq:lower2} immediately follows from  \eqref{eq:lower3}.

 Let $\cC_0$ denote the set of all symmetric $k\times k$ matrices with entries in $\{-1,1\}$.  The graphon hypotheses that we consider in this proof are generated by matrices of intra-class connection probabilities of the form $\bQ_{\bB}:= (\bJ + \epsilon \bB)/2$ where $\bB\in \cC_0$,  $\epsilon\in (0,1/2)$, and $\bJ$ is a $k\times k$ matrix with all entries equal to $1$. Given a matrix $\bQ_{\bB}$, define the graphon 
 $W_{\bB}\in \cW[k]$ by the formula
 \[W_{\bB}(x,y) = \sum_{(a,b)\in [k]\times [k]}(\bQ_{\bB})_{ab}\mathbf{1}_{[(a-1)/k,a/k)\times [(b-1)/k, b/k)}(x,y).
 \]
As in the previous proof, we use the following combinatorial result in the spirit of the Varshamov-Gilbert lemma~\cite[Chapter 2]{tsybakov_book} that grants the existence of a large subset $\cC$ of $\cC_0$ such that the matrices  $\bB\in \cC$ are well separated in some sense.
Given any two permutations  $\pi$ and $\pi'$ of $[k]$ and any matrix $\bB$, we denote by $\bB^{\pi,\pi'}$ a matrix with entries $\bB^{\pi,\pi'}_{ab}= \bB_{\pi(a)\pi'(b)}$. 
 \begin{lemma}\label{lem:varshamov_varation2}
 Let $k_0$ be an integer large enough. For any  $k>k_0$,
 there exists a subset $\cC$ of $\cC_0$ satisfying $\log |\cC|\geq k^2/32$ and such that
 \beq\label{eq:distance_permutation}
 \|\bB_1-\bB_2^{\pi,\pi'}\|_F^2\geq \frac{k^2}{2}
 \eeq
 for all permutations $\pi$, $\pi'$  and all $\bB_1\neq \bB_2\in \cC$.
 \end{lemma}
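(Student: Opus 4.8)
The plan is to prove Lemma~\ref{lem:varshamov_varation2} by a volumetric (greedy packing) argument combined with a union bound over the $(k!)^2$ pairs of permutations $(\pi,\pi')$. I would first record two elementary facts. The set $\cC_0$ of symmetric $k\times k$ matrices with entries in $\{-1,1\}$ has cardinality $|\cC_0|=2^{k(k+1)/2}$, since such a matrix is determined by its $k(k+1)/2$ entries on and above the diagonal. Moreover, as all entries lie in $\{-1,1\}$, for any two $k\times k$ matrices $\bB,M$ with $\{-1,1\}$ entries one has $\|\bB-M\|_F^2=4\,d_H(\bB,M)$, where $d_H$ is the Hamming distance over all $k^2$ entries; thus the requirement $\|\bB_1-\bB_2^{\pi,\pi'}\|_F^2\geq k^2/2$ in \eqref{eq:distance_permutation} is equivalent to $d_H(\bB_1,\bB_2^{\pi,\pi'})\geq k^2/8$.

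Next I would bound the size of a single ``forbidden ball''. Fix $\bB_2\in\cC_0$ and permutations $\pi,\pi'$, and put $M=\bB_2^{\pi,\pi'}$ (note that $M$ need not be symmetric). Split the index set of $M$ into the $k$ diagonal entries, the symmetric off-diagonal pairs ($M_{ab}=M_{ba}$, $a<b$), and the asymmetric off-diagonal pairs ($M_{ab}\neq M_{ba}$). For a symmetric $\bB$ one checks that each asymmetric pair contributes exactly $1$ to $d_H(\bB,M)$ irrespective of $\bB$, each symmetric pair contributes $0$ or $2$, and each diagonal entry contributes $0$ or $1$. Hence the number of symmetric $\bB$ with $d_H(\bB,M)<k^2/8$ is at most $\big(\sum_{j<k^2/16}\binom{\binom{k}{2}}{j}\big)2^k$; using the standard entropy estimate $\sum_{j\leq\epsilon n}\binom{n}{j}\leq 2^{nH(\epsilon)}$ and the fact that $k^2/16$ is, for $k$ large, a fraction strictly below $1/7$ of $\binom{k}{2}$, this is at most $2^{H_0k^2+k}$ for an absolute constant $H_0<3/10$. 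A union bound over the at most $(k!)^2\leq 2^{2k\log_2 k}$ choices of $(\pi,\pi')$ shows that the set of $\bB\in\cC_0$ lying within squared Frobenius distance $k^2/2$ of some permuted copy $\bB_2^{\pi,\pi'}$ has cardinality at most $2^{H_0k^2+k+2k\log_2 k}$.

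Then I would run the greedy construction. The closeness relation is symmetric, since $\|\bB_1-\bB_2^{\pi,\pi'}\|_F=\|\bB_1^{\pi^{-1},\pi'^{-1}}-\bB_2\|_F$ by invariance of the Frobenius norm under row and column permutations. Build $\cC\subseteq\cC_0$ greedily, starting from the empty set and repeatedly adding any matrix of $\cC_0$ lying outside all forbidden balls of the matrices already chosen; by the symmetry just noted, the resulting $\cC$ satisfies \eqref{eq:distance_permutation} for every pair of its distinct elements. The process can only terminate once the forbidden balls of $\cC$ cover $\cC_0$, which forces $|\cC_0|\leq|\cC|\cdot 2^{H_0k^2+k+2k\log_2 k}$, i.e.\ $\log_2|\cC|\geq\tfrac{k(k+1)}{2}-H_0k^2-k-2k\log_2 k\geq(\tfrac12-H_0)k^2-Ck\log k$, and since $\tfrac12-H_0>\tfrac15$ this exceeds $k^2/32$ (whatever the base of the logarithm used in the statement) once $k$ is larger than some absolute constant $k_0$.

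The only step I expect to be delicate is the ball estimate: to leave a positive exponent in the greedy count one needs $H_0<1/2$, and this genuinely uses that $\bB$ is symmetric, so that effectively only about $k^2/2$ coordinates are free while the Hamming radius $k^2/8$ corresponds to a fraction near $1/8$ of them. If one treated $\bB$ as an arbitrary element of $\{-1,1\}^{k\times k}$, the naive ball would have size about $2^{k^2 H(1/8)}$ with $H(1/8)>1/2$, already exceeding $|\cC_0|$. The bookkeeping with the asymmetric pairs of $\bB_2^{\pi,\pi'}$ is what makes the symmetric count rigorous, and one should check, as indicated above, that these pairs only tighten the Hamming constraint and therefore never enlarge the ball.
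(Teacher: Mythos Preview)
Your approach is essentially the same as the paper's: both use a greedy/maximal packing argument in $\cC_0$, bound the ``forbidden ball'' around each $\bB_2$ by a union over the $(k!)^2$ permutation pairs of Hamming balls, and then control the Hamming ball size relative to $|\cC_0|=2^{k(k+1)/2}$. The paper bounds the Hamming ball via the Varshamov--Gilbert lemma combined with the Hamming (sphere-packing) bound, whereas you use the entropy estimate $\sum_{j\le \epsilon n}\binom{n}{j}\le 2^{nH(\epsilon)}$ directly; both give an exponent strictly below $k^2/2$, which is all that is needed.

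There is one small bookkeeping slip in your ball estimate. When $M=\bB_2^{\pi,\pi'}$ has $p$ asymmetric off-diagonal pairs, each such pair indeed contributes exactly $1$ to $d_H(\bB,M)$, but the corresponding bit $\bB_{ab}$ is still a \emph{free} binary choice; thus the count picks up an extra factor $2^p$ that you have not written. Your stated bound $2^k\sum_{j<k^2/16}\binom{\binom{k}{2}}{j}$ is nevertheless correct up to a harmless factor: from $d_H(\bB,M)<k^2/8$ one gets $\min\bigl(d_H^{\mathrm{upper}}(\bB,M),\,d_H^{\mathrm{upper}}(\bB,M^\top)\bigr)<k^2/16$, which yields the bound $2\cdot 2^k\sum_{j<k^2/16}\binom{\binom{k}{2}}{j}$ directly. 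Equivalently, one can check that the map $p\mapsto p+(\binom{k}{2}-p)H\!\left(\tfrac{k^2/16-p/2}{\binom{k}{2}-p}\right)$ is maximized at $p=0$, so the symmetric case you computed is the worst one. Either way the exponent stays at $H(1/8)\binom{k}{2}+O(k)\approx 0.272\,k^2$, and your final inequality $\log|\cC|\ge(\tfrac12-H_0)k^2-Ck\log k\ge k^2/32$ for large $k$ goes through unchanged.
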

We assume in the rest of this proof that $k$ is greater than $k_0$. As noticed above, it is enough to prove  \eqref{eq:lower2} only in this case. We choose a maximal subset $\cC$ satisfying the properties stated in Lemma~\ref{lem:varshamov_varation2}.
 The next lemma shows that the separation between matrices $\bB$ in $\cC$ translates into separation between the corresponding graphons $W_{\bB}$.
 \begin{lemma}\label{lem:lower_bound_distance_graphon_fano}
 Let $\cC$ be a maximal set  satisfying the properties stated in Lemma~\ref{lem:varshamov_varation2}. For any two distinct $\bB_1$ and $\bB_2$ in $\cC$, we have 
\beq\label{separ2}
\delta^2( \rho_n W_{\bB_1},\rho_n W_{\bB_2}) \geq \rho_n^2\epsilon^2/{8} .
\eeq
 \end{lemma}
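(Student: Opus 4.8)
The plan is to reduce the computation of $\delta^2(\rho_n W_{\bB_1},\rho_n W_{\bB_2})$ to a bilinear optimization over doubly stochastic matrices and then feed in the separation estimate of Lemma~\ref{lem:varshamov_varation2}. Since both graphons carry the common factor $\rho_n$, it pulls out of the integral defining $\delta^2(\cdot,\cdot)$, so by definition of $\delta^2$ it is enough to prove $\inf_\tau\int_{[0,1]^2}|W_{\bB_1}(x,y)-W_{\bB_2}(\tau(x),\tau(y))|^2\,dx\,dy\geq \epsilon^2/8$, the infimum being over measure-preserving bijections $\tau$. Fix one such $\tau$, and for $a,b\in[k]$ let $\omega_{ab}$ be the Lebesgue measure of $[(a-1)/k,a/k)\cap\tau([(b-1)/k,b/k))$. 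Because $\tau$ is measure-preserving and every block has measure $1/k$, the row sums and column sums of $(\omega_{ab})$ all equal $1/k$, so the matrix $\bP$ with entries $\bP_{ab}=k\,\omega_{ab}$ is doubly stochastic.

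Next I would express the $L_2$ cost of $\tau$ through $\bP$. Partitioning $[0,1]$ according to the block containing $x$ (for $W_{\bB_1}$) and the block containing $\tau(x)$ (for $W_{\bB_2}$), the integrand is constant on every product of two such level sets, and, using $(\bQ_{\bB})_{ab}=(1+\epsilon\bB_{ab})/2$, one obtains
\[
\int_{[0,1]^2}\bigl|W_{\bB_1}(x,y)-W_{\bB_2}(\tau(x),\tau(y))\bigr|^2\,dx\,dy=\frac{\epsilon^2}{4k^2}\sum_{a_1,a_2,b_1,b_2}\bP_{a_1b_1}\bP_{a_2b_2}\,\bigl[(\bB_1)_{a_1a_2}-(\bB_2)_{b_1b_2}\bigr]^2 .
\]
By the Birkhoff--von Neumann theorem write $\bP=\sum_i c_i\bP_{\pi_i}$ as a convex combination of permutation matrices. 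Substituting, and using that $\bP_{\pi_i}$ picks out $b=\pi_i(a)$, the quadruple sum factorizes as $\sum_{i,j}c_ic_j\sum_{a_1,a_2}[(\bB_1)_{a_1a_2}-(\bB_2)_{\pi_i(a_1)\pi_j(a_2)}]^2=\sum_{i,j}c_ic_j\,\|\bB_1-\bB_2^{\pi_i,\pi_j}\|_F^2$. This is exactly the quantity Lemma~\ref{lem:varshamov_varation2} controls: it gives $\|\bB_1-\bB_2^{\pi,\pi'}\|_F^2\geq k^2/2$ for \emph{every} pair of permutations, which is essential here since the Birkhoff decompositions for the two coordinates are independent. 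Hence the quadruple sum is at least $\tfrac{k^2}{2}\sum_{i,j}c_ic_j=\tfrac{k^2}{2}$, so the cost of $\tau$ is at least $\tfrac{\epsilon^2}{4k^2}\cdot\tfrac{k^2}{2}=\tfrac{\epsilon^2}{8}$. As $\tau$ was arbitrary, taking the infimum and multiplying by $\rho_n^2$ yields $\delta^2(\rho_n W_{\bB_1},\rho_n W_{\bB_2})\geq\rho_n^2\epsilon^2/8$.

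The main obstacle I expect is the second step: one must check carefully that the $L_2$ cost of an \emph{arbitrary} measure-preserving $\tau$ depends on $\tau$ only through the finitely many block-overlap numbers $\omega_{ab}$ — so that passing to the finite-dimensional optimization loses no information — and that these numbers form precisely a scaled doubly stochastic matrix. Once this bookkeeping is in place, the Birkhoff decomposition together with Lemma~\ref{lem:varshamov_varation2} closes the argument with no further estimates, and the remaining rescaling and infimum steps are routine.
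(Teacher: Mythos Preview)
Your proposal is correct and follows essentially the same approach as the paper: both fix a measure-preserving $\tau$, introduce the block-overlap matrix $(\omega_{ab})$, observe that $k\omega$ is doubly stochastic, apply Birkhoff--von Neumann, and then invoke Lemma~\ref{lem:varshamov_varation2} on each pair of permutations in the resulting convex combination to obtain the $\epsilon^2/8$ lower bound. The only cosmetic difference is that the paper writes the decomposition as $\omega=\sum_\pi \gamma_\pi H(\pi)$ with $\sum_\pi\gamma_\pi=1/k$ rather than normalizing to a convex combination of permutation matrices as you do.
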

 Finally, in order to apply Fano's method, we need to have an upper bound on the Kullback-Leibler divergence between the distributions $\P_{W_{\bB}}$ for $\bB\in \cC$.  It is given in the next lemma.
 
 \begin{lemma}\label{lem:kullback_2}
 For any $\bB_1$ and $\bB_2$ in $\cC$, we have
 \[
  \cK(\P_{W_{\bB_1}},\P_{W_{\bB_2}})\leq   {3}n^2\rho_n \epsilon^2\ .
 \]
 \end{lemma}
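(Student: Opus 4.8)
The plan is to use the fact that, in contrast with Lemma~\ref{lem:kullback}, here both graphons $W_{\bB_1}$ and $W_{\bB_2}$ are \emph{balanced} $k$-step functions attached to the \emph{same} partition $\big\{[(a-1)/k,a/k)\big\}_{a\in[k]}$ of $[0,1]$. Hence, setting $\zeta_i=\lceil k\xi_i\rceil$, the vector $\zeta=(\zeta_1,\dots,\zeta_n)$ consists of i.i.d.\ random variables uniform on $[k]$ under \emph{both} hypotheses, and conditionally on $\zeta$ the observations $\bA'=(\bA_{ij},1\le j<i\le n)$ are independent Bernoulli variables with parameters $\rho_n(\bQ_{\bB_1})_{\zeta_i\zeta_j}$ under the first hypothesis and $\rho_n(\bQ_{\bB_2})_{\zeta_i\zeta_j}$ under the second. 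Thus each $\P_{W_{\bB_\ell}}$ is a mixture of product Bernoulli laws, $\P_{W_{\bB_\ell}}=\sum_{\boldsymbol{a}\in[k]^n}k^{-n}P^{(\ell)}_{\boldsymbol{a}}$, where $P^{(\ell)}_{\boldsymbol{a}}$ is the conditional distribution of $\bA'$ given $\zeta=\boldsymbol{a}$, and---crucially---the mixing weights $k^{-n}$ do not depend on $\ell$.

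I would then invoke the joint convexity of the Kullback--Leibler divergence (equivalently, the chain rule for the divergence of $(\zeta,\bA')$ together with the coincidence of the $\zeta$-marginals and the data-processing inequality) to get
\[
\cK(\P_{W_{\bB_1}},\P_{W_{\bB_2}})=\cK\Big(\sum_{\boldsymbol{a}}k^{-n}P^{(1)}_{\boldsymbol{a}},\ \sum_{\boldsymbol{a}}k^{-n}P^{(2)}_{\boldsymbol{a}}\Big)\le\sum_{\boldsymbol{a}}k^{-n}\,\cK\big(P^{(1)}_{\boldsymbol{a}},P^{(2)}_{\boldsymbol{a}}\big).
\]
Since conditionally on $\zeta=\boldsymbol{a}$ the coordinates of $\bA'$ are independent, $\cK\big(P^{(1)}_{\boldsymbol{a}},P^{(2)}_{\boldsymbol{a}}\big)=\sum_{1\le j<i\le n}\cK\big(\rho_n(\bQ_{\bB_1})_{a_ia_j},\rho_n(\bQ_{\bB_2})_{a_ia_j}\big)$.

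Next I would control each Bernoulli term via inequality~\eqref{kullb}. The entries of $\bQ_{\bB}=(\bJ+\epsilon\bB)/2$ lie in $\big((1-\epsilon)/2,(1+\epsilon)/2\big)\subset(1/4,3/4)$ because $\epsilon<1/2$, so \eqref{kullb} applies and, writing $p=(\bQ_{\bB_1})_{a_ia_j}$, $q=(\bQ_{\bB_2})_{a_ia_j}$, gives $\cK(\rho_n p,\rho_n q)\le\frac{16}{3}\rho_n(p-q)^2$; moreover $p-q=\tfrac{\epsilon}{2}\big((\bB_1)_{a_ia_j}-(\bB_2)_{a_ia_j}\big)$ with $\pm1$ entries, so $(p-q)^2\le\epsilon^2$. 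Summing over the $n(n-1)/2$ pairs $j<i$ yields $\cK\big(P^{(1)}_{\boldsymbol{a}},P^{(2)}_{\boldsymbol{a}}\big)\le\frac{8}{3}n(n-1)\rho_n\epsilon^2$ uniformly in $\boldsymbol{a}$, whence $\cK(\P_{W_{\bB_1}},\P_{W_{\bB_2}})\le\frac{8}{3}n(n-1)\rho_n\epsilon^2\le3n^2\rho_n\epsilon^2$.

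The computation is short; the only genuinely important step---and the conceptual crux---is the first observation that in the balanced case the randomness of the latent variables is shared by the two hypotheses and therefore drops out of the divergence, so (unlike in the proof of Lemma~\ref{lem:kullback}, where exactly the opposite phenomenon drives the bound) the whole divergence is governed by the $\pm1$ perturbation $\epsilon\bB$ of the connection probabilities. As in the other lower-bound proofs, one works with the genuine observations $\bA'=(\bA_{ij},1\le j<i\le n)$, so the relevant number of Bernoulli coordinates is $n(n-1)/2$ and the convention $(\bTheta_0)_{ii}=0$ plays no role.
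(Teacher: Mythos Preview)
Your proof is correct and follows essentially the same approach as the paper: both arguments exploit that the mixing distribution $\alpha_{\boldsymbol a}=k^{-n}$ is common to the two hypotheses, apply the joint convexity of the Kullback--Leibler divergence (the paper writes this out as Jensen's inequality for $(x,y)\mapsto x\log(x/y)$), tensorize over the $n(n-1)/2$ independent Bernoulli coordinates, and bound each coordinate divergence via \eqref{kullb} using $|p-q|\le\epsilon$. Your presentation is slightly cleaner in invoking joint convexity directly and in making explicit the conceptual contrast with Lemma~\ref{lem:kullback}, but the two proofs are the same in substance.
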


Choosing now $\epsilon=\left(\frac{k}{n\sqrt{\rho_n}}\wedge 1\right)/32$ we deduce from Lemmas \ref{lem:kullback_2} and \ref{lem:varshamov_varation2} that
\beq\label{kull2}
\mathcal{K}(\P_{W_{\bB_1}},\P_{ W_{\bB_2}})\le \frac1{10}\log |\cC|, \quad \forall \ \bB_1, \bB_2\in \cC: \bB_1\ne \bB_2.
\eeq 
In view of Theorem 2.7 in \cite{tsybakov_book}, inequalities \eqref{separ2} and \eqref{kull2} imply that
\[
\inf_{\widehat{f}}\sup_{W_0\in \cW[k]}\E_{W_0}[\delta^2(\widehat{f}, \rho_n W_0)] \geq C 
\left(\rho_n \frac{k^2}{n^2}\wedge \rho_n^2\right)
\]
where $C>0$ is an absolute constant. This completes the proof of \eqref{eq:lower2}.

 \begin{proof}[Proof of Lemma \ref{lem:varshamov_varation2}]

Define the pseudo-distance $d_{\Pi}$ on $\cC_0$ by $d_{\Pi}(\bB_1,\bB_2):= \underset{\pi,\pi'}{\min}\|\bB_1-\bB_2^{\pi,\pi'}\|_F$, where the minimum is taken over all permutations of $[k]$.
Let $\cC$ be a maximal subset $\cC_0$ of matrices $\bB$ that are $\sqrt{k(k+1)/2}$-separated with respect to $d_{\Pi}$. By maximality of $\cC$, the union  of all balls in $d_{\Pi}$ distance centered at $\bB\in \cC$ with radii $\sqrt{k(k+1)/2}$ covers $\cC_0$. Denoting by $\cB[\bB,\sqrt{k(k+1)/2},d_{\Pi}]$ such a ball, we obtain by a volumetric argument
\[
\left |\cB[\bB,\sqrt{k(k+1)/2},d_{\Pi}]\right |/ |\cC_0| \geq   |\cC|^{-1}\ .
\]
Since $d_{\Pi}(\bB_1,\bB_2)$ is the infimum over all $k!^2$ permutations of the distance $\|\bB_1-\bB_2^{\pi,\pi'}\|_F$, we have 
\[\left |\cB[\bB,\sqrt{k(k+1)/2},d_{\Pi}]\right |\leq k!^2 \left |\cB[\bB,\sqrt{k(k+1)/2},\|.\|_F]\right |\ ,\] where $\cB[\bB,\sqrt{k(k+1)/2},\|\cdot\|_F]$ is the ball centered at $\bB$ with respect to the Frobenius distance.  Given $\bB_1$ and $\bB_2$ in $\cC_0$, $\|\bB_1-\bB_2\|_F^2/4$ is equal to  the Hamming distance between $\bB_1$ and $\bB_2$. Consider the ball $\cB[\bB, k(k+1)/8, d_H]$ centered at $\bB$ with respect to the Hamming distance. As matrices in  $\cC_0$ are symmetric, $\cC_0$ is in bijection with $\{0,1\}^{k(k+1)/2}$. Using Hamming bound and Varshamov-Gilbert lemma~\cite[Chapter 2]{tsybakov_book} we obtain
\[
2^{-k(k+1)/2}|\cB[\bB,  k(k+1)/8 ,d_H]|\leq \vert C'\vert^{-1}\leq e^{-k(k+1)/16}\ 
\]
where $C'$ is a maximal subset of matrices $\bB$ that are $\frac{k(k+1)}{4}$-separated with respect to the Hamming distance.
We then conclude that
\[
 |\cC|\geq \frac{|\cC_0|}{k!^2|\cB[\bB,\sqrt{k(k+1)/2},\|\cdot\|_F]|}\geq \exp\left[k\{(k+1)/16-2\log(k)\}\right]\ ,
\]
which is larger $\exp(k^2/32)$ for $k$ large enough.

 \end{proof}

\begin{proof}[Proof of Lemma \ref{lem:lower_bound_distance_graphon_fano}]

Let $\bB_1$ and  $\bB_2$ be two distinct matrices in $\cC$ and let $\tau$ be a measure preserving bijection $[0,1]\mapsto [0,1]$.  Our aim is to prove that, {for any such $\tau$}, 
\beq\label{eq:lower_l2_graphon2}
\int \vert W_{\bB_1}(x,y) - W_{\bB_2}(\tau(x),\tau(y))\vert ^2dx dy \geq  \epsilon^2/8  .
\eeq
 For any $a,b\in [k]$, let $\omega_{ab}$ denote the  Lebesgue measure of $[(a-1)/k, a/k]\cap \tau([(b-1)/k,b/k])$. Since $\tau$ is measure preserving, $\sum_b \omega_{ab}= 1/k$ and $\sum_a \omega_{ab}= 1/k$. Hence, the $k\times k$ matrix $k\omega$, where $\omega = (\omega_{ab})_{a,b\in [k]}$, is doubly stochastic. For any permutation $\pi$ of $[k]$, denote $H(\pi)$ the corresponding permutation matrix.  By the Birkhoff--von Neumann theorem \cite{chvatal}, $k\omega$ is a convex combination of permutation matrices, that is there exist positive  numbers $\gamma_{\pi}$  such that $\sum_{\pi} \gamma_{\pi} =1/k$, and  $\omega= \sum_{\pi} \gamma_{\pi} H(\pi)$ where the summation runs over all permutations. Using these remarks we obtain
\beqn
\lefteqn{\int \vert W_{\bB_1}(x,y) - W_{\bB_2}(\tau(x),\tau(y))\vert ^2dx dy }&&
\\&=& \frac{\epsilon^2}{4} \sum_{a_1,a_2,b_1,b_2\in [k]}\omega_{a_1a_2}\omega_{b_1b_2}\big[(\bB_1)_{a_1b_1}-(\bB_2)_{a_2b_2}\big]^2\\
& =&\frac{\epsilon^2}{4} \sum_{\pi_1,\pi_2}  \sum_{a_1,a_2,b_1,b_2\in [k]} \gamma_{\pi_1}\gamma_{\pi_2} H(\pi_1)_{a_1a_2}H(\pi_2)_{b_1b_2}\big[(\bB_1)_{a_1b_1}-(\bB_2)_{a_2b_2}\big]^2\\&& (\text{by the definition of permutation matrices})
\\
& =& \frac{\epsilon^2}{4}\sum_{\pi_1,\pi_2}\gamma_{\pi_1}\gamma_{\pi_2}  \sum_{a,b}\big[(\bB_1)_{ab}-(\bB_2)_{\pi_1(a)\pi_2(b)}\big]^2\\
& = &\frac{\epsilon^2}{4}\sum_{\pi_1,\pi_2}\gamma_{\pi_1}\gamma_{\pi_2} \|\bB_{1}-\bB^{\pi_1,\pi_2}_2\|_F^2\\
&\geq & \sum_{\pi_1,\pi_2}\gamma_{\pi_1}\gamma_{\pi_2} \frac{\epsilon^2 k^2}{8}= \frac{\epsilon^2}{8} \ , \\
\eeqn
where we have used Lemma \ref{lem:varshamov_varation2} and the property  $\sum_{\pi}\gamma_{\pi}=1/k$. 

\end{proof}

 \begin{proof}[Proof of Lemma \ref{lem:kullback_2}] 
 The proof is quite similar to that of Lemma \ref{lem:kullback}. Fix two matrices $\bB_1$ and $\bB_2$ in $\cC$. Let $\zeta= (\zeta_1,\ldots,\zeta_n)$ be the vector of $n$ i.i.d. random variables with uniform distribution on $[k]$.
 Let $ \bTheta_1$ be the $n\times n$ symmetric matrix with elements $(\bTheta_1)_{ii}=0$ and $(\bTheta_1)_{ij}=\rho_n[1+ (\bB_1)_{\zeta_i,\zeta_j}]/2$ for $i\neq j$. Assume that, conditionally on $\zeta(u)$, the adjacency matrix $\bA$ is sampled according to the network sequence model with such probability matrix $\bTheta_1$. Notice that in this case the observations $\bA'=(\bA_{ij}, 1\le j<i\le n)$ have the probability distribution $\P_{W_{\bB_1}}$.  
Using this remark, we introduce the probabilities $\alpha_{\boldsymbol{a}} = \P[\zeta=\boldsymbol{a}] $ and $p^{(1)}_{A\boldsymbol{a}}=\P[\bA'=A\vert \zeta=\boldsymbol{a}]$
for $\boldsymbol{a}\in [k]^n$. Next, we introduce the analogous probabilities $p^{(2)}_{A\boldsymbol{a}}$ for a matrix $\bTheta_2$ depending on $\bB_2$ in the same way as $\bTheta_1$ depends on $\bB_1$.
The Kullback-Leibler divergence between $\P_{W_{\bB_1}}$ and $\P_{W_{\bB_2}}$ has the form
$$
\mathcal{K}(\P_{W_{\bB_1}},\P_{W_{\bB_2}})=\sum_{A}\sum_{\boldsymbol{a}}\alpha_{\boldsymbol{a}}p^{(1)}_{A\boldsymbol{a}}  \log\left(\frac{\sum_{\boldsymbol{a}}
\alpha_{\boldsymbol{a}}
p^{(1)}_{A\boldsymbol{a}}}{\sum_{\boldsymbol{a}} 
\alpha_{\boldsymbol{a}}
p^{(2)}_{A\boldsymbol{a}}}\right)
$$
where the sums in $\boldsymbol{a}$ are over $[k]^n$ and the sum in $A$ is over all triangular upper halves of matrices in $ \{0,1\}^{n\times n }$.   Since 
the function $(x,y) \mapsto x\log(x/y)$ is convex we can apply Jensen's inequality to get
\[
\mathcal{K}(\P_{ W_{\bB_1}},\P_{W_{\bB_2}}) \le \sum_{\boldsymbol{a}}\alpha_{\boldsymbol{a}} \sum_{A} p^{(1)}_{A\boldsymbol{a}} \log\left(\frac{p^{(1)}_{A\boldsymbol{a}}}{p^{(2)}_{A\boldsymbol{a}}}\right).
\]
Here, the sum in $A$ for fixed $\boldsymbol{a}$ is the Kullback-Leibler divergence between two $n(n-1)/2$-products of Bernoulli measures, each of which has success probability either $\rho_n(1+\epsilon)/2$ or $\rho_n(1-\epsilon)/2$.
Thus,  for $p=(1+\epsilon)/2$ and $q=(1-\epsilon)/2$ we have
   \begin{equation}\label{productt}
   \mathcal{K}(\P_{W_{\bB_1}},\P_{W_{\bB_2}}) \le \frac{n(n-1)}{2}\rho_n  \kappa(p,q),
 \end{equation}
 where $\kappa(p,q)$ is the Kullback-Leibler divergence between the Bernoulli measures with success probabilities $p$ and $q$. Since the Kullback-Leibler divergence does not exceed the chi-square divergence we obtain $\kappa(p,q)\le (p-q)^2(p^{-1}+q^{-1})= 4\epsilon^2/(1-\epsilon^2)\le 16\epsilon^2/3$ for any $\epsilon<1/2$. The lemma now follows by substitution of this bound on $\kappa(p,q)$ into \eqref{productt}.
 \end{proof}

  \subsubsection{Proof of \eqref{eq:lower3}}
 We use here a reduction to the problem of testing two simple hypotheses by Le Cam's method.  Fix some $0<\epsilon\le 1/4$. Let $W_1$ be the constant graphon with $W_1(x,y)\equiv 1/2$, and let $W_2\in \cW[2]$ be the $2$-step graphon with 
 $W_{2}(x,y)=1/2+\epsilon$ if $x,y\in [0,1/2)^2\cup [1/2,1]^2$ and $W_{2}(x,y)=1/2-\epsilon$ elsewhere. Obviously, we have
\beq \label{delta_distance_W1W2}
  \delta^2(\rho_n W_1,\rho_n W_2)= \rho_n^2 \epsilon^2 .
\eeq
We have
\begin{eqnarray}\nonumber 
 \inf_{\widehat{f}}\max_{W_0 \in \{W_1,W_2\}}\E_{W_0}[\delta^2(\widehat{f}, \rho_n W_0)] 
 &\geq& 
 \frac12\int\left(\delta^2(\widehat{f}, \rho_n W_1) + \delta^2(\widehat{f}, \rho_n W_2)\right)\min(d\P_{ W_1}, d\P_{ W_2})\\
 &\geq&
 \frac{\delta^2(\rho_n W_1,\rho_n W_2)}{4}
 \int\min(d\P_{ W_1}, d\P_{ W_2}) \nonumber\\
 &\ge& \frac{\rho_n^2 \epsilon^2}{8}\exp\left(- \chi^2(\P_{ W_2},\P_{ W_1})\right)
  \label{eq:lower_risk_3}
 \end{eqnarray}
where $\chi^2(\P_{ W_2},\P_{ W_1})$ is the chi-square divergence between $\P_{ W_2}$ and $\P_{ W_1}$. In the last inequality we have used (2.24) and (2.26) from \cite{tsybakov_book}, and \eqref{delta_distance_W1W2}. Finally, the following lemma allows us to conclude the proof by setting $\epsilon= \sqrt{\frac{c_0}{n\rho_n}}\wedge \frac{1}{4}$.

 \begin{lemma}\label{lem:upper_total_variation_distance}
There exists an absolute constant $c_0>0$ such that $\chi^2(\P_{ W_2},\P_{ W_1})\leq 1/4$ if  $\epsilon$ satisfies $n\rho_n \epsilon^2 \leq  c_0$.
 \end{lemma}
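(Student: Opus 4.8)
The plan is to compute the $\chi^2$-divergence via the standard device of introducing a second, independent copy of the latent labels. First I would record the structure of the two measures: under $\P_{W_1}$ the entries $\bA_{ij}$, $j<i$, are i.i.d.\ Bernoulli with parameter $r:=\rho_n/2$, whereas under $\P_{W_2}$ they form a mixture. Indeed, setting $\sigma_i=\mathds{1}\{\xi_i\ge 1/2\}$ (which are i.i.d.\ Bernoulli$(1/2)$), conditionally on $\sigma=(\sigma_1,\dots,\sigma_n)$ the $\bA_{ij}$ are independent Bernoulli with parameter $p_{ij}(\sigma)=r(1+\delta)$ if $\sigma_i=\sigma_j$ and $p_{ij}(\sigma)=r(1-\delta)$ otherwise, where $\delta:=2\epsilon$. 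Since $\epsilon\le 1/4$ and $\rho_n\le 1$ all these parameters lie in $(0,1)$, so $\P_{W_2}\ll\P_{W_1}$ and the divergence is finite.

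Next I would use that for a mixture $\P_{W_2}=\E_\sigma Q_\sigma$ one has $1+\chi^2(\P_{W_2},\P_{W_1})=\E_{\sigma,\sigma'}\prod_{j<i}\Phi\big(p_{ij}(\sigma),p_{ij}(\sigma')\big)$, where $\sigma,\sigma'$ are independent copies of the label vector and, for two Bernoulli laws with parameters $p,q$ against the reference Bernoulli$(r)$, $\Phi(p,q):=\tfrac{pq}{r}+\tfrac{(1-p)(1-q)}{1-r}$. A one-line computation yields the identity $\Phi(p,q)=1+\tfrac{(p-r)(q-r)}{r(1-r)}$. Since $p_{ij}(\sigma)-r=r\delta\,\tau_{ij}$ with $\tau_{ij}:=2\,\mathds{1}\{\sigma_i=\sigma_j\}-1$, and analogously for $\sigma'$, each factor equals $1+\tfrac{r\delta^2}{1-r}\,\tau_{ij}\tau'_{ij}$. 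Writing $\eta_i=2\sigma_i-1$, $\eta'_i=2\sigma'_i-1$, and $\zeta_i:=\eta_i\eta'_i$ — which, by independence of the two copies, are i.i.d.\ Rademacher — we get $\tau_{ij}\tau'_{ij}=\zeta_i\zeta_j$, hence $\sum_{j<i}\tau_{ij}\tau'_{ij}=\tfrac12(S^2-n)$ with $S:=\sum_{i=1}^n\zeta_i$. This collapse of the edge product to a function of $S^2$ alone is the crux.

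Now I would apply $1+x\le e^x$ termwise and set $\lambda:=\tfrac{r\delta^2}{2(1-r)}$; since $r\le 1/2$ this gives $\lambda\le 2\rho_n\epsilon^2$, and
\[
1+\chi^2(\P_{W_2},\P_{W_1})\ \le\ e^{-\lambda n}\,\E\exp\!\big(\lambda S^2\big).
\]
To control $\E\exp(\lambda S^2)$ I would use the Gaussian identity $e^{\lambda S^2}=\E_g e^{\sqrt{2\lambda}\,gS}$ with $g\sim\mathcal N(0,1)$, Fubini, the bound $\cosh t\le e^{t^2/2}$ giving $\E_\zeta e^{\sqrt{2\lambda}\,gS}\le e^{n\lambda g^2}$, and finally $\E_g e^{n\lambda g^2}=(1-2n\lambda)^{-1/2}$ for $2n\lambda<1$. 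Hence $1+\chi^2(\P_{W_2},\P_{W_1})\le e^{-\lambda n}(1-2n\lambda)^{-1/2}\le(1-2n\lambda)^{-1/2}$. Since $2n\lambda\le 4\,n\rho_n\epsilon^2\le 4c_0$, choosing $c_0$ small enough (e.g.\ $c_0=1/16$, so that $2n\lambda\le 1/4$) forces $(1-2n\lambda)^{-1/2}\le (3/4)^{-1/2}<5/4$, i.e.\ $\chi^2(\P_{W_2},\P_{W_1})\le 1/4$, which is the claim.

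The argument is essentially routine once organized this way; the two points needing a little care are the algebraic simplification $\Phi(p,q)=1+(p-r)(q-r)/(r(1-r))$ and the observation that, after passing to the second copy $\sigma'$ and the Rademacher variables $\zeta_i$, the whole edge product becomes a function of $S^2$, so that $\E\exp(\lambda S^2)$ is the only nontrivial quantity — handled by the Hubbard–Stratonovich trick. I do not expect any genuine obstacle beyond keeping track of constants so that the final bound is $\le 1/4$.
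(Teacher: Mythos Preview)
Your proof is correct and follows essentially the same route as the paper's. Both arguments use the two-copy device to write $1+\chi^2=\E_{\sigma,\sigma'}[\,\cdot\,]$, reduce the edge product to an exponential in the square of a single centered sum, and then bound that exponential moment via subgaussianity. The paper parametrizes things through the counts $N_{ij}=|\{a:\xi_a=i,\xi'_a=j\}|$ and the centered binomial $Z=N_{00}+N_{11}-n/2$, whereas you go directly through the Rademacher variables $\zeta_i=\eta_i\eta'_i$ and $S=\sum_i\zeta_i$; since $S=2Z$ these are the same object, and your Hubbard--Stratonovich computation simply makes explicit the subgaussian moment bound that the paper quotes.
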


\begin{proof}[Proof of Lemma \ref{lem:upper_total_variation_distance}]

Let $L(\bA')$ be likelihood ratio of $\P_{ W_2}$ with respect to $\P_{ W_1}$. Since $\chi^2(\P_{ W_2},\P_{ W_1})=\E_{ W_1}[L(\bA')^2]-1$ it remains to prove that $\E_{ W_1}[L(\bA')^2]\leq 5/4$. 

For the sake of brevity, we write in what follows $\E[\cdot]=\E_{ W_1}[\cdot]$.
We also set $p_0:=\rho_n/2$,  $p_1:=\rho_n(1/2+\epsilon)$ and $p_2:=\rho_n(1/2-\epsilon)$.  

As the graphon $\rho_n W_2$ is a 2-step function, we may assume that the components of $\boldsymbol{\xi}=(\xi_1,\ldots, \xi_n)$ are i.i.d. uniformly distributed on $\{0,1\}$.
Given $\boldsymbol{\xi}$, define the collection $S:= \{\{a,b\}:\ \xi_a=\xi_b\}$ of subsets of indices with identical position.
For $\{i,j\}$ in $S$ (resp. $S^c$), $\bA_{i,j}$ follows a Bernoulli distribution with parameter $p_1$ (resp. $p_2$).
Denote $\mu$ the  distribution of $S$ and  $n^{(2)}=n(n-1)/2$  the number of subsets of size $2$ of $[n]$. Then, the likelihood has the form
\beqn 
 L(\bA')& =& \int L_S(\bA')d\mu(S)\ ,\\
 L_S(\bA')&:=& \left(\frac{1-p_1}{1-p_0}\right)^{|S|}\left(\frac{1-p_2}{1-p_0}\right)^{n^{(2)}-|S|}  \prod_{\{a,b\}\in S }\left(\frac{p_1(1-p_0)}{p_0(1-p_1)}\right)^{\bA_{ab}}\prod_{\{a,b\}\in S^c }\left(\frac{p_2(1-p_0)}{p_0(1-p_2)}\right)^{\bA_{ab}}\ .
\eeqn 
By Fubini theorem, we may write $\E[L^2(\bA')] = \int \mathbb{E}\left[L_{S_1}(\bA')L_{S_2}(\bA')\right]d\mu(S_1)d\mu(S_2)$, where 
\beqn 
\E\left[L_{S_1}(\bA')L_{S_2}(\bA')\right] &= & \left(\frac{1-p_1}{1-p_0}\right)^{|S_1|+|S_2|}\left(\frac{1-p_2}{1-p_0}\right)^{2n^{(2)}-|S_1|-|S_2|}
\E\Big[\prod_{\{a,b\}\in S_1\cap S_2 }\left(\tfrac{p_1(1-p_0)}{p_0(1-p_1)}\right)^{2\bA_{ab}}\times \\
&&
\prod_{\{a,b\}\in 
S_1^c\cap S_2^c }\left(\tfrac{p_2(1-p_0)}{p_0(1-p_2)}\right)^{2\bA_{ab}}
\prod_{\{a,b\}\in S_1\Delta S_2 }\left(\tfrac{p_1p_2(1-p_0)^{2}}{p_0^2(1-p_1)(1-p_2)}\right)^{\bA_{ab}}
\Big]
\\
& =& \left[1+ \tfrac{(p_1-p_0)^2}{p_0(1-p_0)}\right]^{|S_1\cap S_2|}\left[1+ \tfrac{(p_2-p_0)^2}{p_0(1-p_0)}\right]^{|S^c_1\cap S^c_2|}\left[1+ \tfrac{p_1p_2+p_0^2-p_1p_0-p_2p_0}{p_0(1-p_0)}\right]^{|S_1\Delta S_2|}\ . 
\eeqn 
Using the definition of $p_0$, $p_1$ and $p_2$, we find  
\beqn 
\E\left[L_{S_1}(\bA')L_{S_2}(\bA')\right]& = & \left[1+ \frac{\rho_n^2\epsilon^2}{p_0(1-p_0)}\right]^{|S_1\cap S_2|+|S^c_1\cap S^c_2|}\left[1- \frac{\rho_n^2\epsilon^2}{p_0(1-p_0)}\right]^{|S_1\Delta S_2|}\\
&\leq & \left[1+ \frac{\rho_n^2\epsilon^2}{p_0(1-p_0)}\right]^{|S_1\cap S_2|+|S^c_1\cap S^c_2|-|S_1\Delta S_2|}\\
& \leq & \exp\left[\left(2|S_1\cap S_2|+2|S^c_1\cap S^c_2| -n^{(2)} \right) 4\rho_n\epsilon^2 \right].
\eeqn 
Thus, to bound the second moment of $L(\bA')$, it suffices to control an exponential moment of $T:= |S_1\cap S_2|+|S^c_1\cap S^c_2|$ where $S_1$ and $S_2$ are independent and distributed  according to $\mu$. To handle this quantity, we denote by $\boldsymbol{\xi}=(\xi_1,\ldots, \xi_n)$ and by $\boldsymbol{\xi}'= (\xi'_1,\ldots, \xi'_n)$ the positions for the first and second sample corresponding to $S_1$ and $S_2$, respectively. Next, for any $i,j\in \{0,1\}^2$, define
\[N_{ij}:= |\big\{a: \xi_a=i\text{ and }{\xi'}_a=j\big\}|\ .\]
Then we have $2|S_1\cap S_2|+n=N_{00}^2+ N_{01}^2+ N_{10}^2+N^2_{11}$ and 
 $2|S_1^c\cap S_2^c|= 2N_{00}N_{11}+ 2N_{01}N_{10}$ so that $2T+n = (N_{00}+N_{11})^2 + (N_{01}+N_{10})^2$. 
Define the random variable $Z:= N_{00}+N_{11}- n/2$. It has a centered binomial distribution with parameters $n$ and $1/2$. We have
 \[
  2T- n^{(2)}= (n/2+Z)^2 + (n/2-Z)^2  -n-n^{(2)}  = 2Z^2 - n/2\ .
 \]
Plugging this identity into the expression for $\E[L^2(\bA')]$, we conclude that
\[
\E[L^2(\bA')]\leq \mathbf{E}\left[\exp\big(8\rho_n \epsilon^2 Z^2\big)\right]\ ,
\]
where $\mathbf{E}[\cdot]$ stands for the expectation with respect to the distribution of $Z$.
By Hoeffding's inequality, $Z$ has a subgaussian distribution with subgaussian norm smaller than $n^{1/2}$. Consequently, $\E[L^2(\bA')]\leq 5/4$ as soon as  $8\rho_n  n \epsilon^2$ is smaller than some numerical constant.
\end{proof}

\subsection{Technical lemmas}\label{subsec:lemmas}
\begin{lemma}\label{lemma_diff_ordered_stat}
Let $X_{1},\dots,X_{n}$ be i.i.d. uniformly distributed variables on $[0,1]$ and  $X_{(i)}$ is the $i$th element of the ordered sample $X_{(1)}\leq X_{(2)}\leq \dots \leq X_{(n)}$. Then, for any $n_0\leq n$, and $0\leq s<n_0$,
\[\E\left (X_{(i)}-X_{(i+s)}\right )^{2}=\dfrac{s(s+1)}{(n+1)(n+2)}\leq \left (\dfrac{n_0}{n}\right )^{2}\ .\]
\end{lemma}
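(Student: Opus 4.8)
The plan is to exploit the classical description of uniform spacings. First I would recall that the vector of spacings
\[
\big(X_{(1)},\ X_{(2)}-X_{(1)},\ \dots,\ X_{(n)}-X_{(n-1)},\ 1-X_{(n)}\big)
\]
of $n$ i.i.d.\ uniform random variables on $[0,1]$ is distributed uniformly on the $n$-dimensional probability simplex, i.e.\ it has the Dirichlet distribution with $n+1$ parameters all equal to $1$. When $s\ge 1$, the difference $X_{(i+s)}-X_{(i)}=\sum_{l=i+1}^{i+s}\big(X_{(l)}-X_{(l-1)}\big)$ is a sum of $s$ of these $n+1$ coordinates, so by the aggregation property of the Dirichlet distribution, $X_{(i+s)}-X_{(i)}$ has the $\mathrm{Beta}(s,\,n+1-s)$ distribution. (For $s=0$ the difference is identically $0$ and the claimed identity holds trivially.)

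Next I would apply the second-moment formula for the Beta law: if $D\sim\mathrm{Beta}(\alpha,\beta)$ then $\E[D^2]=\tfrac{\alpha(\alpha+1)}{(\alpha+\beta)(\alpha+\beta+1)}$. Substituting $\alpha=s$ and $\beta=n+1-s$, so that $\alpha+\beta=n+1$ and $\alpha+\beta+1=n+2$, gives
\[
\E\big[(X_{(i)}-X_{(i+s)})^2\big]=\frac{s(s+1)}{(n+1)(n+2)},
\]
which is the asserted equality.

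For the inequality, since $s<n_0$ and $s$ is an integer we have $s\le n_0-1$, hence $s(s+1)\le (n_0-1)n_0\le n_0^2$; combining this with the elementary bound $(n+1)(n+2)\ge n^2$ yields
\[
\frac{s(s+1)}{(n+1)(n+2)}\le \frac{n_0^2}{n^2}=\Big(\frac{n_0}{n}\Big)^2,
\]
which completes the proof.

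There is essentially no genuine obstacle here; the only step requiring a moment's care is the identification of the spacing law with $\mathrm{Beta}(s,n+1-s)$, and this can be replaced, if one prefers an entirely self-contained argument, by a direct computation using the standard moments of uniform order statistics, namely $\E X_{(i)}=\tfrac{i}{n+1}$, $\Var X_{(i)}=\tfrac{i(n+1-i)}{(n+1)^2(n+2)}$, and $\Cov(X_{(i)},X_{(j)})=\tfrac{i(n+1-j)}{(n+1)^2(n+2)}$ for $i\le j$: expanding $\E[(X_{(i)}-X_{(i+s)})^2]=\Var X_{(i)}+\Var X_{(i+s)}-2\Cov(X_{(i)},X_{(i+s)})+\big(\tfrac{s}{n+1}\big)^2$ and simplifying the numerator (which collapses to $s(s+1)(n+1)$) gives the same identity.
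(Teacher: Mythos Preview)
Your proof is correct and follows essentially the same approach as the paper: identify the spacing $X_{(i+s)}-X_{(i)}$ as $\mathrm{Beta}(s,n+1-s)$ and apply the Beta second-moment formula. Your version is simply more detailed---you justify the Beta law via the Dirichlet representation of spacings, treat the trivial case $s=0$, and spell out the inequality $s(s+1)\le n_0^2$ and $(n+1)(n+2)\ge n^2$, whereas the paper just asserts the Beta distribution and leaves the inequality implicit.
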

\begin{proof}
Note that $X_{(i)}-X_{(i+s)}\sim \mathrm{Beta}(s,n-s+1)$. For $Y\sim \mathrm{Beta}(\beta,\gamma)$ we have 
\[\E (Y^{2})=\dfrac{\beta(\beta+1)}{(\beta+\gamma+1)(\beta+\gamma)}\]
which implies the lemma.
\end{proof}

\begin{proof}[Proof of Lemma \ref{lem:covering}]
Since $\|\hat{\bT}\|_{\infty}\leq 2r$, the definition of $q_{\max}$ and the fact that  $\| \hat{\bT}\|_F \ge r$ imply $\epsilon_0\leq \|\hat{\bT}\|_{F}\leq \epsilon_{q_{\max}}$. 
Denote by $\hat{q}$ the integer such that  $2^{-1}\epsilon_{\hat{q}}\leq \|\hat{\bT}\|_F< \epsilon_{\hat{q}}$.
Let $\hat{\bV}_0$ be a matrix minimizing $\|\hat{\bT}/\|\hat{\bT}\|_F-\bV\|_F$ over all $\bV$ in $\cC_{\hat{z}_r}$. Then, take $\hat{\bV}=\hat{\bV}_0^{\hat{q},\hat\bU,{\hat z}_r}$ where $\hat\bU$ is a minimizer of $\|\hat{\bT}- \hat{\bV}_0^{\hat{q},\bU,{\hat z}_r}\|_F$ over all $\bU\in \{-1,0,1\}^{k\times k}$. Notice that $\hat\bU$ is also a minimizer of $\|\hat{\bT}- \hat{\bV}_0^{\hat{q},\bU,{\hat z}_r}\|_{\infty}$ over all $\bU\in \{-1,0,1\}^{k\times k}$ since both $\hat{\bT}$ and $\hat{\bV}_0^{\hat{q},\bU,{\hat z}_r}$ are block-constant matrices with the same block structure determined by ${\hat z}_r$.

Denoting by $\bf 0$ the zero $k\times k$ matrix we have
\beqn
\|\hat{\bT}-\hat{\bV}\|_F&\leq& \|\hat{\bT}- \hat{\bV}_0^{\hat{q},{\bf 0},{\hat z}_r}\|_F 
\quad \quad \quad  \quad  \text{ (since $\hat U$ is a minimizer of $\|\hat{\bT}- \hat{\bV}_0^{\hat{q},\bU,{\hat z}_r}\|_F$)}
\\
&=& \|\hat{\bT}-\epsilon_{\hat{q}}\hat{\bV}_0\|_F
\quad \quad \quad  \quad  \quad \text{ (since $\|\hat{\bT}\|_{\infty}\leq 2r$)}
\\
&=& \|\hat{\bT}-\epsilon_{\hat{q}}\hat{\bT}/\|\hat{\bT}\|_F\|_F+\epsilon_{\hat{q}} \|\hat{\bT}/\|\hat{\bT}\|_F- \hat{\bV}_0\|_F 
\\ 
&\leq& (\|\hat{\bT}\|_F-\epsilon_{\hat{q}})+ \epsilon_{\hat{q}} \|\hat{\bT}/\|\hat{\bT}\|_F- \hat{\bV}_0\|_F 
\\
&\leq & (\|\hat{\bT}\|_F-\epsilon_{\hat{q}})+ \frac{\epsilon_{\hat{q}}}{4}\quad \quad \quad   \text{ (since $\cC_{\hat{z}_r}$ is a $1/4$-net)}\\
&\leq & \|\hat{\bT}\|_F/4.
\eeqn
Next, since $\hat{\bV} = \hat{\bV}_0^{\hat{q},\hat\bU,{\hat z}_r}$ minimizes $\|\hat{\bT}- \hat{\bV}_0^{\hat{q},\bU,{\hat z}_r}\|_{\infty}$ over $\bU$ we have 
$$
\|\hat{\bT}-\hat{\bV}\|_\infty \le \|\hat{\bT}- \hat{\bV}_0^{\hat{q},\bU^\ast,{\hat z}_r}\|_{\infty}
$$
where 
 $\bU^*$ is the matrix with elements defined by the relation $\bU^*_{ab}= {\rm sign}(\hat{\bT}_{ij})$ if $i\in{\hat z}_r^{-1}(a), j \in {\hat z}_r^{-1}(b)$ for $(a,b) \in [k]\times [k]$. Thus, all the entries of $\hat{\bV}_0^{\hat{q},\bU^\ast,{\hat z}_r}$ in each block are either equal to $r$ or to $-r$ depending on whether the value of $\hat{\bT}$ on this block is positive or negative respectively. Since $\|\hat{\bT}\|_{\infty}\leq 2r$ we obtain that $\|\hat{\bT}- \hat{\bV}_0^{\hat{q},\bU^\ast,{\hat z}_r}\|_{\infty} \le r$.

\end{proof}

{\bf Acknowledgement} 
The work of O. Klopp was conducted as part of the project Labex MME-DII (ANR11-LBX-0023-01).
The work of A.B. Tsybakov was supported by GENES and by the French National Research Agency (ANR) under the grants 
IPANEMA (ANR-13-BSH1-0004-02), and Labex ECODEC (ANR - 11-LABEX-0047). The work of N.~Verzelen was partly supported by the ANR under the grant ANR-2011-BS01-010-01, project Calibration. This research benefited from the support of the ``Chaire Economie et Gestion des Nouvelles Donn\'ees", under the auspices of Institut Louis Bachelier, Havas-Media and Paris-Dauphine.  The authors are grateful to Jiaming Xu for a comment on Corolary 2.7 
in a previous version of this  manuscript.

\bibliographystyle{plain}

\bibliography{ref}

%
%
%
%
%
%
%
%
%
%
%

\end{document}